\documentclass[11pt]{article}
\pdfoutput=1

\usepackage[margin=1in]{geometry}

\usepackage[utf8]{inputenc}
\usepackage[T1]{fontenc}
\usepackage{lmodern}
\usepackage{indentfirst}
\usepackage{tcolorbox}
\usepackage[flushleft]{threeparttable}
\usepackage{booktabs}
\usepackage{multirow}  
\usepackage{makecell} 
\usepackage{bbding}

\usepackage{amsmath, amssymb, amsthm, mathrsfs, bm}
\usepackage[dvipsnames]{xcolor}
\usepackage{graphicx}
\usepackage{float}
\usepackage{caption}
\usepackage[labelformat=simple]{subcaption} 
\usepackage{multirow, tablefootnote, colortbl, hhline} 
\usepackage{algorithm}
\usepackage{algorithmic}
\usepackage[framemethod=default]{mdframed}
\usepackage{enumitem}
\usepackage{lineno} 
\usepackage{natbib}
\usepackage[colorlinks,
            linkcolor=red,
            anchorcolor=blue,
            citecolor=blue,
            urlcolor=blue]{hyperref}

\def\vzero{\mathbf{0}}
\def\vone{\mathbf{1}}
\def\va{\mathbf{a}}
\def\vb{\mathbf{b}}
\def\vc{\mathbf{c}}
\def\vd{\mathbf{d}}
\def\ve{\mathbf{e}}
\def\vf{\mathbf{f}}
\def\vg{\mathbf{g}}
\def\vh{\mathbf{h}}
\def\vi{\mathbf{i}}
\def\vj{\mathbf{j}}
\def\vk{\mathbf{k}}
\def\vl{\mathbf{l}}
\def\vm{\mathbf{m}}
\def\vn{\mathbf{n}}
\def\vo{\mathbf{o}}
\def\vp{\mathbf{p}}
\def\vq{\mathbf{q}}
\def\vr{\mathbf{r}}
\def\vs{\mathbf{s}}
\def\vt{\mathbf{t}}
\def\vu{\mathbf{u}}
\def\vv{\mathbf{v}}
\def\vw{\mathbf{w}}
\def\vx{\mathbf{x}}
\def\vy{\mathbf{y}}
\def\vz{\mathbf{z}}

\def\vmu{\bm{\mu}}
\def\vphi{\bm{\phi}}
\def\veta{\bm{\eta}}
\def\valpha{\bm{\alpha}}
\def\vbeta{\bm{\beta}}
\def\vtheta{\bm{\theta}}
\def\vxi{\bm{\xi}}

\def\fA{{\mathcal{A}}}
\def\fB{{\mathcal{B}}}
\def\fC{{\mathcal{C}}}
\def\fD{{\mathcal{D}}}
\def\fE{{\mathcal{E}}}
\def\fF{{\mathcal{F}}}
\def\fG{{\mathcal{G}}}
\def\fH{{\mathcal{H}}}
\def\fI{{\mathcal{I}}}
\def\fJ{{\mathcal{J}}}
\def\fK{{\mathcal{K}}}
\def\fL{{\mathcal{L}}}
\def\fM{{\mathcal{M}}}
\def\fN{{\mathcal{N}}}
\def\fO{{\mathcal{O}}}
\def\fP{{\mathcal{P}}}
\def\fQ{{\mathcal{Q}}}
\def\fR{{\mathcal{R}}}
\def\fS{{\mathcal{S}}}
\def\fT{{\mathcal{T}}}
\def\fU{{\mathcal{U}}}
\def\fV{{\mathcal{V}}}
\def\fW{{\mathcal{W}}}
\def\fX{{\mathcal{X}}}
\def\fY{{\mathcal{Y}}}
\def\fZ{{\mathcal{Z}}}

\def\sA{{\mathbb{A}}}
\def\sB{{\mathbb{B}}}
\def\sC{{\mathbb{C}}}
\def\sD{{\mathbb{D}}}

\def\BE{{\mathbb{E}}}
\def\BB{{\mathbb{B}}}
\def\BF{{\mathbb{F}}}
\def\BG{{\mathbb{G}}}
\def\BH{{\mathbb{H}}}
\def\BI{{\mathbb{I}}}
\def\BJ{{\mathbb{J}}}
\def\BK{{\mathbb{K}}}
\def\BL{{\mathbb{L}}}
\def\BM{{\mathbb{M}}}
\def\BN{{\mathbb{N}}}
\def\BO{{\mathbb{O}}}
\def\BP{{\mathbb{P}}}
\def\BQ{{\mathbb{Q}}}
\def\BR{{\mathbb{R}}}
\def\BS{{\mathbb{S}}}
\def\BT{{\mathbb{T}}}
\def\BU{{\mathbb{U}}}
\def\BV{{\mathbb{V}}}
\def\BW{{\mathbb{W}}}
\def\BX{{\mathbb{X}}}
\def\BY{{\mathbb{Y}}}
\def\BZ{{\mathbb{Z}}}

\def\mA{\mathbf{A}}
\def\mB{\mathbf{B}}
\def\mC{\mathbf{C}}
\def\mD{\mathbf{D}}
\def\mE{\mathbf{E}}
\def\mF{\mathbf{F}}
\def\mG{\mathbf{G}}
\def\mH{\mathbf{H}}
\def\mI{\mathbf{I}}
\def\mJ{\mathbf{J}}
\def\mK{\mathbf{K}}
\def\mL{\mathbf{L}}
\def\mM{\mathbf{M}}
\def\mN{\mathbf{N}}
\def\mO{\mathbf{O}}
\def\mP{\mathbf{P}}
\def\mQ{\mathbf{Q}}
\def\mR{\mathbf{R}}
\def\mS{\mathbf{S}}
\def\mT{\mathbf{T}}
\def\mU{\mathbf{U}}
\def\mV{\mathbf{V}}
\def\mW{\mathbf{W}}
\def\mX{\mathbf{X}}
\def\mY{\mathbf{Y}}
\def\mZ{\mathbf{Z}}
\newcommand{\norm}[1]{\left\| #1 \right\|}

\DeclareMathOperator*{\argmin}{argmin}
\DeclareMathOperator*{\argmax}{argmax}

\newtheorem{theorem}{Theorem}
\newtheorem{lemma}{Lemma}
\newtheorem{proposition}{Proposition}
\newtheorem{definition}{Definition}
\newtheorem{corollary}[theorem]{Corollary}
\newtheorem{remark}{Remark}

\newtheorem{assumption}{Assumption}

\begin{document}
\title{Nonsmooth Nonconvex-Concave Minimax Optimization: Convergence Criteria and Algorithms
}

\author{
Jinyang Shi
\thanks{School of Mathematical Science, Fudan University; email: jinyangshi22@m.fudan.edu.cn}
\and
Luo Luo
\thanks{School of Data Science, Fudan University; email: luoluo@fudan.edu.cn}
}

\maketitle

\begin{abstract}
This paper considers constrained stochastic nonsmooth minimax optimization problem of the form $\min_{\mathbf{x}\in\mathcal{X}}\max_{\mathbf{y}\in\mathcal{Y}}f\left(\mathbf{x},\mathbf{y}\right)=\mathbb{E}[F(\mathbf{x},\mathbf{y};\mathbf{\xi})]$, 
where the objective $f(\mathbf{x},\mathbf{y})$ is concave in~$\mathbf{y}$ but possibly nonconvex in $\mathbf{x}$, the stochastic component $F(\mathbf{x},\mathbf{y};\mathbf{\xi})$ indexed by random variable~$\mathbf{\xi}$ is mean-squared Lipschitz continuous, and the feasible sets $\fX$ and $\fY$ are convex and compact.
We introduce the notion of $(\eta_x,\eta_y,\delta,\epsilon)$-Goldstein saddle stationary point (GSSP) to characterize the convergence for solving constrained nonsmooth minimax problems. 
We then develop projected gradient-free descent ascent methods for finding $(\eta_x,\eta_y,\delta,\epsilon)$-GSSPs of the objective function $f(\mathbf{x},\mathbf{y})$ with non-asymptotic convergence rates.
We further propose nested-loop projected gradient-free descent ascent methods to establish the non-asymptotic convergence for finding $(\eta,\delta,\epsilon)$-generalized Goldstein stationary points (GGSP) \citep{liu2024zeroth} of the primal function $\Phi(\mathbf{x})\triangleq\max_{\mathbf{y}\in\mathcal{Y}}{f}\left(\mathbf{x},\mathbf{y}\right)$.
It is worth noting that our algorithm designs and theoretical analyses do not require additional assumptions such as the weak convexity used in prior works on nonsmooth minimax optimization \citep{lin2025two,boct2023alternating}.
\end{abstract}

\section{Introduction}

We consider the following constrained stochastic nonsmooth minimax optimization problem
\begin{equation}\label{prob:constrained main}
\min_{\vx\in\mathcal{X}}\max_{\vy\in\mathcal{Y}}f(\vx,\vy)\triangleq\mathbb{E}[F(\vx,\vy;\vxi)],
\end{equation}
where the objective $f(\vx,\vy)$ is concave in $\vy$ but possibly nonconvex in $\vx$, the stochastic component $F(\vx,\vy;\vxi)$ is indexed by random variable $\vxi\sim\fD$, and feasible sets $\fX\subseteq\BR^{d_x}$ and~$\fY\subseteq\BR^{d_y}$ are convex and compact. 
The nonconvex-concave minimax optimization includes broad applications in machine learning, such as generative adversarial networks \citep{goodfellow2014generative,arjovsky2017wasserstein}, distributional robust optimization \citep{rahimian2019distributionally,jin2021non,sinha2017certifying,levy2020large,gao2023distributionally,biggio2012poisoning}, AUC maximization \citep{guo2023fast,liu2019stochastic,yuan2021federated,zhang2023federated}, and reinforcement learning \citep{jin2020efficiently,qiu2020single,wai2018multi,zhang2019policy}.

Most existing studies on nonconvex-concave minimax optimization focus on the problem with smooth objective function \citep{lin2020gradient,jin2020local,thekumparampil2019efficient,zhang2021complexity,lin2020near,lu2020hybrid,chen2021cubic,rafique2022weakly,huang2022accelerated,luo2022finding,luo2020stochastic,xu2023unified,xu2024derivative,wang2024efficient,yang2023accelerating,li2021complexity,zhang2020single,mahdavinia2022tight}. 
For example, \citet{lin2020gradient} showed that gradient descent ascent (GDA) and its stochastic variants can achieve the approximate stationary points of smooth nonconvex-concave minimax problems with non-asymptotic convergence rates.  
Later, \citet{lin2020near} provided inexact proximal point methods to achieve the improved first-order oracle complexity, nearly matching the lower bound in the specific nonconvex-strongly-concave setting \citep{zhang2021complexity,li2021complexity}.
In addition, \citet{xu2024derivative,huang2022accelerated,wang2023zeroth,xu2020gradient} incorporated the two-point gradient estimate \citep{nesterov2017random} into GDA and its variants, establishing efficient zeroth-order methods for  nonconvex-concave minimax optimization.

In real-world applications, the objectives in minimax formulations are usually nonsmooth
\citep{madry2018towards, rahimian2019distributionally, jiang2023optimality, sinha2017certifying}.
One popular setting is the composite minimax optimization, i.e., the objective consists of a smooth term and a simple nonsmooth convex regularization. 
In this case, we can extend the algorithms for the solving smooth minimax problems by additionally introducing the proximal mapping, which leads to the similar analysis and convergence rates to the smooth counterparts \citep{lu2020hybrid,zhao2024primal,barazandeh2020solving,huang2021efficient,zhang2024robust}.
However, the algorithms with proximal mapping are impractical when the nonsmooth part in the objective is complicated.
Another line of research considers 
the nonsmooth nonconvex-concave minimax optimization with Lipschitz continuity.
Specifically, \citet{lin2025two,boct2023alternating} establish the convergence guarantees of GDA methods for solving nonsmooth nonconvex-concave minimax problems with Lipschitz continuous objective, 
while their analyses require that the Moreau envelope \citep{moreau1965proximite} of the primal function be well-defined, which depends on an additional assumption of weak convexity.
Unfortunately, the weak convexity does not always hold,
such as in the applications of statistical learning \citep{fan2001variable,zhang2006gene,zhang2010nearly,mazumder2011sparsenet,zhang2010analysis} and training neural networks with ReLU activations \citep{nair2010rectified,zhang2020complexity}.
Very recently, \citet{masiha2026zeroth} established results without weakly convex assumption, while their analysis does not cover the fully nonsmooth setting since it still requires the objective $f(\vx,\vy)$ be smooth in $\vy$.
This leads to a natural question: \emph{Can we establish the convergence guarantees for the general nonsmooth nonconvex-concave minimax optimization?}

In this paper, we give an affirmative answer to the above question.
Specifically, we introduce the notion of $(\eta_x,\eta_y,\delta,\epsilon)$-Goldstein saddle stationary point (GSSP) to describe the convergence of solving constrained nonsmooth minimax problem (\ref{prob:constrained main}), 
which goes beyond the existing criteria of the approximate (generalized) Goldstein stationary points for nonsmooth minimization problem \citep{liu2024zeroth,zhang2020complexity,lin2022gradient}.
We then propose single-loop zeroth-order methods for achieving $(\eta_x,\eta_y,\delta,\epsilon)$-GSSPs of the objective~$f(\vx,\vy)$ with the non-asymptotic convergence guarantees. 
In addition, we investigate the convergence with respect to the primal function $\Phi(\vx) \triangleq \max_{\vy \in \mathcal{Y}} f(\vx, \vy)$, which is guaranteed to be Lipschitz continuous when the objective function $f(\vx,\vy)$ is Lipschitz continuous and concave in $\vy$ \citep{lin2020gradient}.
Therefore, the $(\eta,\delta,\epsilon)$-generalized Goldstein stationary point (GGSP) for minimizing $\Phi(\vx)$ over the feasible set $\fX$ is well-defined \citep{liu2024zeroth}.
This motivates us to propose double-loop zeroth-order methods for achieving approximate GGSPs of the primal function $\Phi(\vx)$.
It is worth noting that the previous algorithm designs and theory for nonsmooth nonconvex-concave minimax optimization requires the additional assumption that the objective function $f(\vx,\vy)$ is weakly-convex in $\vx$ to guarantee the Moreau envelope of $\Phi(\vx)$ and the (inexact) proximal point iteration to be well-defined \citep{lin2025two,boct2023alternating}, which  
is restrictive in real applications  \citep{nair2010rectified,zhang2020complexity}.
In contrast, all of our results do not depend on the assumption of weak convexity.
We summarize our results and those from related work in Table \ref{table:comparison}.

\begin{table}[t]
\centering
\caption{We compare our results with related work for both
nonconvex-strongly-concave (NC-SC) and general nonconvex-concave (NC-C) cases,
where $L$ is the mean-squared Lipschitz parameter of stochastic components, $L_f$ is Lipschitz parameter of the objective, $\mu$ is the strongly concave parameter, $\sigma^2$ is the variance of gradient noise (only for first-order methods \citep{lin2025two}, $\Delta$ is the initial optimal primal function value gap, $D_y$ is the diameter of $\fY$, and $\rho$ is the weakly convex parameter. 
The column ``WC'' indicates whether the algorithm requires the weakly convex assumption.
}\label{table:comparison}  \vskip-0.1cm
\resizebox{\linewidth}{!}{
\begin{threeparttable}
\begin{tabular}{ccccccc}
\hline
Setting & Oracle & Method$^\S$ & Complexity$^\sharp$ & Criteria$^\ddag$ &  WC  \\
\hline\addlinespace
\multirow{3}{*}[-7.0ex]{NC-SC}
    & 1st & \makecell{TTSGDA \\ \citep{lin2025two}} & $\widetilde{\fO}\left(\dfrac{\rho^3(L_f^2+\sigma^2)^2\Delta}{\mu\epsilon^6}\right)$ 
    & $\epsilon$-SP of $\Phi_{1/(2\rho)}$
    & \Checkmark \!\!   \\ \addlinespace
    & 0th & \makecell{PGFDA \\ (Theorem \ref{zero-thm})}
&$\fO\left(\dfrac{(d_x+d_y)^{3}L^5(\Delta+\delta L)}{\mu^3 \delta^4 \epsilon^3}\right)$ 
    & $(\eta_x,\eta_y,\delta,\epsilon)$-GSSP of $f$ \!\!\!\!
    & \XSolidBrush \!\!
     \\ \addlinespace
    & 0th &\makecell{NL-PGFDA  \\(Theorem \ref{Theorem of Phi})}&$\fO\left(\dfrac{d_x^{7/2} d_y L^7(\Delta+\delta L)}{\mu^2\delta^3 \epsilon^6}\right)$ 
    & $(\eta,\delta,\epsilon)$-GGSP of $\Phi$
    & \XSolidBrush \!\!
      \\ \addlinespace
    \hline \addlinespace  
    \multirow{3}{*}[-7.3ex]{NC-C}
    & 1st & \makecell{TTSGDA  \\ \citep{lin2025two}}&$\fO\left(\dfrac{\rho^3(L_f^2+\sigma^2)^2D_y^2\Delta }{\epsilon^8}\right)$ 
    & $\epsilon$-SP of $\Phi_{1/(2\rho)}$
    & \Checkmark \!\!
      \\ \addlinespace
    & 0th & \makecell{PGFDA \\(Corollary \ref{thm: complexity of f: concave})}&
    $\fO\left(\dfrac{(L+\epsilon)^5 (d_x+d_y)^{3}D_y^3 (\Delta+\delta L+\delta\epsilon)}{ \delta^4 \epsilon^6}\right)$
     & $(\eta_x,\eta_y,\delta,\epsilon)$-GSSP of $f$  \!\!\!\!
    &  \XSolidBrush \!\!  \\ \addlinespace
    & 0th & \makecell{NL-PGFDA\\ (Corollary \ref{thm: complexity of phi: concave})} & $\fO\left(\dfrac{ d_x^{11/2} d_y \big(L+\delta\epsilon/(d_x D_y)\big)^7D_y^4 \big(\Delta+\delta L+\delta^2\epsilon/(d_x D_y)\big)}{\delta^5 \epsilon^8}\right)$ 
    &  $(\eta,\delta,\epsilon)$-GGSP of $\Phi$ 
    & \XSolidBrush \!\!
      \\ \addlinespace
\hline
\end{tabular}
\vskip-0.05cm
\begin{itemize}[leftmargin=0.6cm,topsep=-0.2cm,itemsep=-0.2cm,partopsep=-0.2cm]
\item[$\S$] \large The results of \citet{boct2023alternating,masiha2026zeroth} require the objective $f(\vx,\vy)$ to be smooth in~$\vy$, which is different with our general nonsmooth setting. Hence, we do not incorporate them into comparison. \\[-0.3cm]
\item[$\sharp$] \large See discussion on the relationship between the smoothness parameters $L$ and $L_f$ in Section~\ref{sec:disscussion}.   \\[-0.3cm]
\item[$\ddag$] \large Note that \citet{lin2025two} established their convergence guarantees with respect to $\epsilon$-stationary point (SP) of the Moreau envelope $\Phi_{1/(2\rho)}(\vx)\triangleq\min_{\vz\in\BR^d} \Phi(\vz)+\rho\norm{\vz-\vx}^2$, which requires the objective $f(\vx,\vy)$ be $\rho$-weakly convex in~$\vx$ for some~$\rho>0$.
\end{itemize}  
\end{threeparttable}}  \vskip-0.3cm
\end{table}

\textbf{Paper Organization} In Section \ref{sec: Preliminary}, we formalize our problem setting and review existing convergence criteria for nonsmooth optimization.
In Section \ref{Sec: The stationary points}, we introduce the notion of stationarity for nonsmooth minimax optimization. 
In Section \ref{sec:smoothing}, we introduce the randomized smoothing for nonconvex-concave minimax problems. 
In Sections \ref{sec: f} and~\ref{sec: Phi}, we propose stochastic zeroth-order methods for solving constrained nonsmooth nonconvex-concave minimax problems and show their non-asymptotic convergence guarantees with respect to both the objective function and the primal function.
In Section~\ref{sec:disscussion}, we compare our results with related work \citep{lin2025two}.  
In Section~\ref{Sec: Experiments}, we conduct numerical experiments to evaluate the effectiveness of proposed methods.
Finally, we conclude our work and remark the future directions in Section \ref{sec:conclusion}.

\section{Preliminaries}\label{sec: Preliminary}

In this section, we first formalize assumptions for our stochastic nonsmooth nonconvex-concave minimax optimization problem, 
then review the notions of approximate (generalized) Goldstein stationary points used for the convergence analysis of nonsmooth optimization.

\subsection{Assumptions}

For constrained stochastic minimax problem (\ref{prob:constrained main}), we introduce the augmented sets as follows
\begin{align}\label{dfn:XY-delta}
\fX_\delta = \{\vx+\vu: \vx\in\fX,\vu\in\BR^{d_x},\|\vu\|\leq\delta\}
~~\text{and}~~
\fY_\delta = \{\vy+\vv: \vy\in\fY,\vv\in\BR^{d_y},\|\vv\|\leq\delta\}
\end{align}
for some $\delta>0$.
Our assumptions for the stochastic component $F(\vx,\vy;\vxi)$ and the objective function $f(\vx,\vy)$ are with respect to the augmented sets $\fX_\delta$ and $\fY_\delta$ to align with the notation of $\delta$-Goldstein subdifferential (see Definition \ref{def:delta-goldstein}) for nonsmooth analysis.

We suppose the stochastic component $F(\vx,\vy;\vxi)$ in our problem (\ref{prob:constrained main}) is mean-squared Lipschitz continuous, which follows the standard setting in existing studies on stochastic zeroth-order nonsmooth nonconvex optimization \citep{lin2022gradient,chen2023faster,kornowski2024algorithm}.

\begin{assumption}\label{asm:Lipschitz}
We suppose that the stochastic component $F(\vx,\vy;\vxi)$ is $L(\vxi)$-Lipschitz continuous in $(\vx,\vy)$ on the augmented  set $\fX_\delta\times\fY_\delta$ for any $\vxi$, i.e. it holds 
\begin{align*}
|F(\vx_1,\vy_1;\vxi) - F(\vx_2,\vy_2;\vxi)| \le L(\vxi) \left(\norm{\vx_1 - \vx_2}^2 + \norm{\vy_1 - \vy_2}^2\right)^{1/2} 
\end{align*}
for all $\vx_1, \vx_2\in\fX_\delta$ and $\vy_1, \vy_2\in\fY_\delta$.
Moreover, we suppose $L(\vxi)$ has bounded second-order moment, i.e. there exists some constant $L>0$ such that 
\begin{align*}
\BE_{\vxi} \big[L(\vxi)^2\big] \le L^2.    
\end{align*}
\end{assumption}
Note that Assumption \ref{asm:Lipschitz} indicates the stochastic component $F(\vx,\vy;\vxi)$ is mean-squared Lipschitz continuous with parameter $L$, i.e.,
for all $\vx_1, \vx_2\in\fX_\delta$ and $\vy_1, \vy_2\in\fY_\delta$, it holds 
\begin{align*}
\mathbb{E}\left[|F(\vx_1,\vy_1;\vxi) - F(\vx_2,\vy_2;\vxi)|^2\right] \leq L^2\left(\norm{\vx_1 - \vx_2}^2 + \norm{\vy_1 - \vy_2}^2\right).    
\end{align*}
We impose the following assumption on the feasible sets $\fX\subseteq\BR^{d_x}$ and $\fY\subseteq\BR^{d_y}$.

\begin{assumption}\label{asm:convex-closed}
    We suppose the feasible sets $\fX\subseteq\BR^{d_x}$ and $\fY\subseteq\BR^{d_y}$ are convex and closed.
    Furthermore, we suppose the feasible set $\fY$ is bounded with a diameter $D_y>0$.
\end{assumption}
We suppose the objective $f(\vx,\vy)$ is concave in $\vy$.
\begin{assumption}\label{asm:concave}
    We suppose the objective function $f(\vx,\vy)$ is concave in $\vy$ on the augmented set $\fY_\delta$, i.e., 
    for all $\vx \in \fX_\delta$, $\vy_1,\vy_2 \in \fY_\delta$, and  $\lambda\in[0,1]$, it holds 
    \begin{align*}        
        f(\vx, \lambda \vy_1 + (1-\lambda)\vy_2)
        \;\ge\;
        \lambda f(\vx, \vy_1) + (1-\lambda) f(\vx, \vy_2).
    \end{align*}
\end{assumption}
We also consider the stronger assumption that the objective $f(\vx,\vy)$ is strongly concave in $\vy$.
\begin{assumption}\label{asm:strongly-concave}
    We suppose the objective function $f(\vx,\vy)$ is $\mu$-strongly concave in $\vy$ on the augmented set $\fY_\delta$, i.e., there exists some $\mu>0$ such that for all $\vx \in \fX_\delta$, $\vy_1,\vy_2 \in \fY_\delta$, and~$\lambda\in[0,1]$,  it holds 
    \begin{align*}
        f(\vx,\lambda \vy_1 + (1-\lambda)\vy_2)
        \ge
        \lambda f(\vx,\vy_1) + (1-\lambda)f(\vx,\vy_2)
        +
        \frac{\mu\lambda(1-\lambda)}{2}\|\vy_1 - \vy_2\|^2.
    \end{align*}
\end{assumption}
The minimax problem (\ref{prob:constrained main}) can be viewed as minimizing the primal function
\begin{align}\label{def: Phi}
 \Phi(\vx) \triangleq \max_{\vy \in \mathcal{Y}} f(\vx, \vy).   
\end{align}
over the domain $\fX\subseteq\BR^{d_x}$. We suppose the function $\Phi$ is lower bounded on $\fX$.

\begin{assumption}\label{asm:lower-bound}
    We suppose the primal function $\Phi^*\triangleq\inf_{\vx\in\mathcal{X}}\Phi(\vx)$ is lower bounded, i.e., there exists $\Phi^*\triangleq\inf_{\vx\in\mathcal{X}}\Phi(\vx)>-\infty$.
\end{assumption}

\subsection{Generalized Goldstein Stationary Points}

We first review the background in the analysis of nonsmooth minimization problem. 
Recall that Rademacher’s theorem says Lipschitz continuous functions are differentiable almost everywhere. Hence, we define the Clarke subdifferential as follows \citep{clarke1990optimization}.

\begin{definition}[{\citet{clarke1990optimization}}]
The Clarke subdifferential of a Lipschitz continuous \mbox{function} $h:\BR^d\to\BR$ at a point $\vz\in\BR^d$ is defined as
\begin{align*}    
\partial h(\vz) = \mathrm{conv}\left(\left\{\, \vg : \vg = \lim_{k \to \infty} \nabla h(\vz_k),\ \vz_k \to \vz \,\right\}\right),
\end{align*}
where $\mathrm{conv}(\cdot)$ is the convex hull.
\end{definition}
It is known that finding an $\epsilon$-Clarke stationary point $\vz$ such that $\min\{\norm{\vg}:\vg\in\partial h(\vz)\}\leq\epsilon$ is computational intractable \citep{zhang2020complexity}, 
so that we further consider the $\delta$-Goldstein subdifferential which is defined as follows \citep{goldstein1977optimization}.

\begin{definition}[{\citet{goldstein1977optimization}}]\label{def:delta-goldstein}
For given $\delta>0$ and a Lipschitz continuous function $h:\BR^d\to\BR$, the $\delta$-Goldstein subdifferential at a point $\vz\in\BR^d$ is defined as 
\begin{align*}
\partial_{\delta} h(\vz) := \operatorname{conv} \left(\cup_{\hat\vz \in \BB^d(\vz,\delta)} \partial h(\hat\vz) \right),
\end{align*}
where $\BB^d(\vz,\delta)=\{\vw\in\BR^d:\norm{\vw-\vz}\leq\delta\}$ is the $d$-dimensional Euclidean ball centered at point $\vz\in\BR^d$ with radius $\delta$.
\end{definition}
Consequently, we introduce the notion of the $(\delta,\epsilon)$-Goldstein stationary point (GSP), which is widely used as the convergence criteria in the analysis of unconstrained nonconvex nonsmooth minimization problems \citep{zhang2020complexity}.
\begin{definition}[{\citet{zhang2020complexity}}]\label{def:GSP}
For given a Lipschitz function $h:\BR^d\to\BR$, we say the point $\vz\in\BR^d$ is a $(\delta,\epsilon)$-Goldstein stationary point of $h$ if it holds
\begin{align*}
    \min\big\{\|\mathbf{g}\| : \mathbf{g} \in \partial_\delta h(\mathbf{z})  \big\} \leq \epsilon.
\end{align*}
\end{definition}

For the constrained nonconvex nonsmooth problem, we can generalize the standard $(\delta,\epsilon)$-Goldstein stationary point by replacing the subgradient in its definition with the subgradient mapping, 
resulting in the criteria of $(\eta,\delta,\epsilon)$-generalized Goldstein stationary point (GGSP)  \citep{liu2024zeroth}.

\begin{definition}[{\citet{liu2024zeroth}}]\label{def:GGSP}
Consider minimization problem 
$\min_{\vz\in\fZ} h(\vz)$ with the Lipschitz continuous objective $h:\BR^{d_z}\to\BR$ and the convex and compact feasible set $\fZ\subseteq\BR^{d_z}$. 
We say the point $\vz\in\fZ$ is a~$(\eta,\delta,\epsilon)$-generalized Goldstein stationary point (GGSP) of the above minimization problem if it holds
\begin{align*}
\min_{\vg \in \partial_\delta h(\vz)} \| G(\vz, \vg, \eta) \| \leq \epsilon,   
\end{align*}
where 
$G(\vz,\vg,\eta) = (\vz-\Pi_\fZ\left(\vz - \eta \vg\right))/{\eta}$
is the subgradient mapping at point $\vz\in\fZ$ with respect to $\vg\in\partial_\delta h(\vz)$ and $\eta>0$.
Here, we use the notation $\Pi_\mathcal{Z}(\vw) :=\arg\min_{\hat\vw \in \mathcal{Z}} \|\hat\vw - \vw\|^2$ to present the projection of the point~$\vw\in\BR^{d_z}$ onto the feasible set $\mathcal{Z}\subseteq\BR^d$.
\end{definition}

\begin{remark}\label{remark:subgradient-ggsp}    
It is worth noting that solving minimization problem 
$\min_{\vz\in\fZ} h(\vz)$ by projected subgradient descent  with stepsize $\eta>0$ can be written as \citep{nesterov2013introductory} 
\begin{align*}
    \vz_+ = \vz - \eta G(\vz,\vg,\eta).
\end{align*}
Hence, the term $G(\vz,\vg,\eta)$ in the definition of $(\eta,\delta,\epsilon)$-GGSP for constrained optimization corresponds to descent direction of projected subgradient descent, which aligns with the fact that term $\mathbf{g} \in \partial_\delta h(\mathbf{z})$ in the notion of $(\delta,\epsilon)$-GSP (Definition \ref{def:GSSP}) for unconstrained problem corresponds to the descent direction of subgradient descent.
\end{remark}
\begin{remark}
In fact, the Lipschitz continuity for the objective $h$ in Definition \ref{def:GGSP} is only required on the set $\fZ_\delta = \{\vz+\vw: \vz\in\fZ,\vw\in\BR^{d_z},\|\vw\|\leq\delta\}$ to align with the notion of $\delta$-Goldstein subdifferential in Definition \ref{def:delta-goldstein}.
\end{remark}

\section{Stationarity for Nonsmooth Minimax Optimization}\label{Sec: The stationary points}

We first introduce the notation of $(\eta_x,\eta_y,\delta, \epsilon)$-Goldstein saddle stationary point (GSSP) to characterize the convergence with respect to the objective $f(\vx,\vy)$ for our nonsmooth nonconvex-concave minimax problem (\ref{prob:constrained main}).

\begin{definition}\label{def:GSSP}
    We say the point $(\vx,\vy)\in\mathcal{X}\times\mathcal{Y}$ is a $(\eta_x,\eta_y,\delta, \epsilon)$-Goldstein saddle stationary point (GSSP) of the minimax optimization problem (\ref{prob:constrained main}) if it holds
    \begin{align*}
        \min_{(\vg_x, \vg_y) \in \partial_\delta f(\vx,\vy)} \| G_x(\vx,\vy, \vg_{x}, \eta_x) \| \leq \epsilon
        \quad\text{and}~~
        \quad\min_{(\vg_x, \vg_y) \in \partial_\delta f(\vx,\vy)} \| G_y(\vx,\vy, \vg_{y}, \eta_y) \| \leq \epsilon,
    \end{align*}
    where $\partial_\delta f(\vx,\vy)$ is the $\delta$-Goldstein subdifferential that follows Definition \ref{def:delta-goldstein} and
    \begin{align}\label{eq:gradient-mapping-xy}
        G_x(\vx, \vy,\vg_{x},\eta_x) = \frac{\vx-\Pi_\mathcal{X}\left(\vx - \eta_x \vg_{x}\right) }{\eta_x}
        \quad\text{and}\quad
        G_y(\vx, \vy,\vg_{y},\eta_y) = \frac{\Pi_\mathcal{Y}\left(\vy + \eta_y \vg_{y}\right) - \vy}{\eta_y}
    \end{align}
    are subgradient mapping at point $(\vx,\vy)\in\fX\times\fY$ with respect to $(\vg_x, \vg_y) \in \partial_\delta f(\vx,\vy)$ and stepsizes~$\eta_x,\eta_y>0$ for variables $\vx$ and  $\vy$, respectively.
\end{definition}
The definition of our approximate GSSP is motivated by the subgradient descent ascent for solving constrained nonsmooth convex-concave minimax problem 
\begin{align*}
    \min_{\vx\in\mathcal{X}}\max_{\vy\in\mathcal{Y}}f(\vx,\vy),
\end{align*}
which performs the iteration
\begin{align}\label{eq:subgradient-iteration}
    \begin{cases}
        \vx_+ = \Pi_\mathcal{X}\left(\vx - \eta_x\vg_{x}\right), \\[0.1cm]
        \vy_+ = \Pi_\mathcal{Y}\left(\vy + \eta_y \vg_{y}\right),
    \end{cases}
\end{align}
where $\eta_x,\eta_y>0$ are stepsizes and $(\vg_x,\vg_y)\in\partial f(\vx,\vy)$ is a subgradient of $f$ at point $(\vx,\vy)$.
Note that the iteration scheme (\ref{eq:subgradient-iteration}) can also be written as
\begin{align}\label{eq:subgradient-iteration2}
    \begin{cases}
        \vx_+ = \vx - \eta_x G_x(\vx, \vy,\vg_{x},\eta_x), \\[0.1cm]
        \vy_+ = \vy + \eta_y G_y(\vx, \vy,\vg_{y},\eta_y),
    \end{cases}
\end{align}
where $G_x(\vx, \vy,\vg_{x},\eta_x)$ and $G_y(\vx, \vy,\vg_{y},\eta_y)$ follow equation (\ref{eq:gradient-mapping-xy}).
Hence, terms $G_x(\vx, \vy,\vg_{x},\eta_x)$ and $G_y(\vx, \vy,\vg_{y},\eta_y)$ in the notion of $(\eta_x,\eta_y,\delta, \epsilon)$-GSSP (Definition \ref{def:GSSP}) corresponds to the descent and ascent directions of iteration scheme (\ref{eq:subgradient-iteration2}), 
which is similar to the intuition of GGSP for constrained nonsmooth minimization problem described (Remark \ref{remark:subgradient-ggsp}).

\begin{remark}
It is worth noting that the original notion of GGSP \citep{liu2024zeroth} is not suitable to nonsmooth optimization. 
Note that directly applying Definition \ref{def:GGSP} with $h=f$, $\vz=(\vx,\vy)$, $\fZ=\fX\times\fY$, and $\vg=(\vg_x,\vg_y)\in\partial_\delta f(\vx,\vy)$ to minimax problem (\ref{prob:constrained main}) results
\begin{align}\label{eq:wrong-extension}
    G(\vz,\vg,\eta) = \begin{bmatrix}
    \dfrac{\vx-\Pi_\mathcal{X}\left(\vx - \eta \vg_{x}\right) }{\eta} \\[0.35cm]
    \dfrac{\vy-\Pi_\mathcal{Y}\left(\vy - \eta \vg_{y}\right) }{\eta}
    \end{bmatrix}.
\end{align} The component $(\vy-\Pi_\mathcal{Y}\left(\vy - \eta \vg_{y}\right))/{\eta}$ in equation (\ref{eq:wrong-extension}) corresponds to the step of subgradient descent on the function $f(\vx,\cdot)$ for fixed $\vx$, which validates the intuition that we desire to maximize the function $f(\vx,\vy)$ over the variable $\vy$ in minimax problem.
Additionally, the roles of variables~$\vx$ and~$\vy$ in nonconvex concave minimax problem (\ref{prob:constrained main}) are asymmetric, which indicates it is more reasonable to introduce different $\eta_x>0$ and $\eta_y>0$ for these two variables in the notion of~$(\eta_x,\eta_y,\delta, \epsilon)$-GSSP.
\end{remark}

Besides the objective, we are also interested in the stationarity of the primal function
\begin{align*}
\Phi(\vx) \triangleq \max_{\vy \in \mathcal{Y}} f(\vx, \vy).
\end{align*}
Note that the primal function $\Phi(\vx)$ for the Lipschitz nonconvex-concave objective $f(\vx,\vy)$ is Lipschitz continuous \citep{lin2025two}, which indicates its $(\eta_x,\delta,\epsilon)$-GGSP is well-defined. 
\begin{proposition}[{\citet[Lemma 4.2]{lin2025two}}]\label{Lipshitz of Phi}
    Under Assumptions \ref{asm:Lipschitz} and \ref{asm:concave}, the primal function $\Phi(\vx) \triangleq \max_{\vy \in \mathcal{Y}} f(\vx, \vy)$ is $L$-Lipschitz continuous on domain $\fX_\delta$.
\end{proposition}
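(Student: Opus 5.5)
The argument has two parts. First, the objective $f$ inherits Lipschitz continuity from the stochastic components. Second, a pointwise supremum of uniformly Lipschitz functions is Lipschitz. Concavity is needed only to make sure $\Phi$ is finite and the maximum is attained; compactness of $\fY$ from Assumption \ref{asm:convex-closed} together with continuity of $f$ actually suffices for this.

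\textbf{Step 1: $f$ is $L$-Lipschitz on $\fX_\delta\times\fY_\delta$.} For $\vx_1,\vx_2\in\fX_\delta$ and $\vy_1,\vy_2\in\fY_\delta$, Jensen's inequality and Assumption \ref{asm:Lipschitz} give
\begin{align*}
|f(\vx_1,\vy_1)-f(\vx_2,\vy_2)| &\le \BE\big[|F(\vx_1,\vy_1;\vxi)-F(\vx_2,\vy_2;\vxi)|\big] \\
&\le \BE[L(\vxi)]\left(\norm{\vx_1-\vx_2}^2+\norm{\vy_1-\vy_2}^2\right)^{1/2}.
\end{align*}
By Cauchy--Schwarz, $\BE[L(\vxi)]\le(\BE[L(\vxi)^2])^{1/2}\le L$. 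We implicitly need $f$ to be finite, i.e.\ $F(\cdot;\vxi)$ integrable at one point; this is standard and I would state it as implicit in the setting. In particular, for each fixed $\vy\in\fY$, the map $\vx\mapsto f(\vx,\vy)$ is $L$-Lipschitz on $\fX_\delta$.

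\textbf{Step 2: the maximum is attained and $\Phi$ is finite.} Fix $\vx\in\fX_\delta$. The map $f(\vx,\cdot)$ is continuous on the compact set $\fY$, which is closed and bounded with diameter $D_y$. Hence $\Phi(\vx)=\max_{\vy\in\fY}f(\vx,\vy)$ is finite and attained at some $\vy^*(\vx)\in\fY$. Concavity is not needed here, though it guarantees the maximizer set is convex; uniqueness is not required.

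\textbf{Step 3: the Lipschitz bound for $\Phi$.} Take $\vx_1,\vx_2\in\fX_\delta$ and let $\vy_1^*$ be a maximizer for $\vx_1$. Then
\begin{align*}
\Phi(\vx_1)-\Phi(\vx_2) \le f(\vx_1,\vy_1^*)-f(\vx_2,\vy_1^*) \le L\norm{\vx_1-\vx_2}.
\end{align*}
Exchanging the roles of $\vx_1$ and $\vx_2$ gives the reverse inequality, so $|\Phi(\vx_1)-\Phi(\vx_2)|\le L\norm{\vx_1-\vx_2}$.

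The only delicate point is Step 1. One must use the first moment bound $\BE[L(\vxi)]\le L$ rather than the mean-squared inequality directly, and note that $f$ is well-defined on the augmented set. Everything after that is the routine max-of-Lipschitz argument.
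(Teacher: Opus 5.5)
Your proof is correct and matches the paper's argument: the paper bounds $\Phi(\vx_1)-\Phi(\vx_2)\le f(\vx_1,\vy^*(\vx_1))-f(\vx_2,\vy^*(\vx_1))\le L\norm{\vx_1-\vx_2}$ and swaps roles. It takes for granted what your Steps 1--2 make explicit, namely that $f$ inherits $L$-Lipschitzness from $F$ and that a maximizer exists on the compact $\fY$; concavity is never actually used. One small correction to your commentary: Step 1 is not delicate, since the mean-squared inequality also works directly via $|\BE[X]|\le(\BE[X^2])^{1/2}$.
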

Therefore, we can also study the convergence of nonsmooth nonconvex-concave minimax problem \eqref{prob:constrained main} with respect to the $(\eta_x,\delta,\epsilon)$-GGSP of the Lipschitz continuous function $\Phi$ by following Definition \ref{def:GGSP} with~$h=\Phi$ and $\fZ=\fX$.

\section{Randomized Smoothing for Nonconvex-Concave Minimax Problems}\label{sec:smoothing}

We consider the smoothed surrogate for the objective $f(\vx,\vy)$ as follows
\begin{align}\label{eq:f_delta}
    f_\delta(\vx,\vy) = \BE_{(\vu,\vv)\sim {\mathrm{Unif}}(\BB ^{d}(\vzero,1))} [f(\vx+\delta\vu, \vy+\delta\vv)],
\end{align}
where $\delta>0$, $d=d_x+d_y$, $(\vu,\vv)$ is uniformly distributed on the $d$-dimensional unit ball centered at the origin, denoted as $\BB^{d}(\vzero,1)$ \citep{yousefian2012stochastic}.
The Lipschitz continuity of $f(\vx,\vy)$ indicates the function $f_\delta(\vx,\vy)$ has the following properties \citep{lin2022gradient}.

\begin{proposition}[\citet{lin2022gradient}]\label{prop:smoothing}
Under Assumptions \ref{asm:Lipschitz}, for all $\vx\in\fX$ and $\vy\in\fY$, the objective $f(\vx,\vy)$ and its smooth surrogate $f_\delta(\vx,\vy)$ defined in equation \eqref{eq:f_delta} hold
\begin{enumerate}[leftmargin=0.8cm,topsep=1.5pt,itemsep=2.5pt,partopsep=2.5pt, parsep=2.5pt]
\item[(a)] $\vert f_{\delta}(\vx,\vy) - f(\vx,\vy) \vert \le \delta L$;
\item[(b)] $f_{\delta}(\vx,\vy)$ is $L$-Lipschitz continuous;
\item[(c)] $f_{\delta}(\vx,\vy)$ is $M_{f_\delta}$-smooth with $M_{f_\delta}=cd^{1/2}L\delta^{-1}$ for some constant $c>0$; 
\item[(d)]  $\nabla f_{\delta}(\vx,\vy) \in \partial_{\delta} f(\vx,\vy)$, where $\partial_{\delta} f(\vx,\vy)$ is the $\delta$-Goldstein subdifferential.
\end{enumerate}
\end{proposition}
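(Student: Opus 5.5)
The plan is to prove the four properties in order. Each follows from the Lipschitz continuity of $f$ on the augmented set $\fX_\delta\times\fY_\delta$, which is what Assumption \ref{asm:Lipschitz} supplies. Write $\vz=(\vx,\vy)$, $\vw=(\vu,\vv)$, and $d=d_x+d_y$.

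First, Lipschitz continuity of $f$ itself follows from Jensen's inequality. For $\vz_1,\vz_2\in\fX_\delta\times\fY_\delta$,
\[
|f(\vz_1)-f(\vz_2)|\le\BE|F(\vz_1;\vxi)-F(\vz_2;\vxi)|\le \BE[L(\vxi)]\,\|\vz_1-\vz_2\|\le L\|\vz_1-\vz_2\|,
\]
where $\BE[L(\vxi)]\le (\BE[L(\vxi)^2])^{1/2}\le L$. For $\vz\in\fX\times\fY$ and $\|\vw\|\le1$, the point $\vz+\delta\vw$ lies in the augmented set. Hence
\[
|f_\delta(\vz)-f(\vz)|\le\BE|f(\vz+\delta\vw)-f(\vz)|\le L\delta\,\BE\|\vw\|\le\delta L,
\]
which is (a). For (b), the same coupling with a common $\vw$ gives $|f_\delta(\vz_1)-f_\delta(\vz_2)|\le\BE|f(\vz_1+\delta\vw)-f(\vz_2+\delta\vw)|\le L\|\vz_1-\vz_2\|$.

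For (c), use the divergence-theorem representation $\nabla f_\delta(\vz)=\frac{d}{\delta}\BE_{\vs\sim\mathrm{Unif}(\mathbb{S}^{d-1})}[f(\vz+\delta\vs)\vs]$. A short computation then gives the Hessian bound. Alternatively, write $\nabla f_\delta=\BE_{\vw}[\nabla f(\vz+\delta\vw)]$, which is valid by Rademacher's theorem, and bound $\|\nabla f_\delta(\vz_1)-\nabla f_\delta(\vz_2)\|$ by $L$ times the total variation distance between the uniform measures on two shifted balls. That distance is at most $c\sqrt d\,\|\vz_1-\vz_2\|/\delta$ by the standard estimate on the volume of the symmetric difference of two balls. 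This route gives $M_{f_\delta}=c\sqrt d L/\delta$. Obtaining the $\sqrt d$ factor rather than $d$ is the main technical point. I would rely on the known bound that the volume ratio of the symmetric difference is $\lesssim \sqrt d\,r/\delta$ for shift $r$, as in \citet{lin2022gradient}.

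For (d), I would argue by contradiction using a separation argument. Suppose $\nabla f_\delta(\vz)\notin\partial_\delta f(\vz)$. The set $\partial_\delta f(\vz)$ is convex and compact, so some unit vector $\vp$ and constant $a$ satisfy $\langle \vp,\nabla f_\delta(\vz)\rangle>a\ge\langle\vp,\vg\rangle$ for all $\vg\in\partial_\delta f(\vz)$. Now use $\nabla f_\delta(\vz)=\BE_\vw[\nabla f(\vz+\delta\vw)]$, which only involves points where $f$ is differentiable. Each such gradient lies in $\partial f(\vz+\delta\vw)\subseteq\partial_\delta f(\vz)$, so $\langle\vp,\nabla f(\vz+\delta\vw)\rangle\le a$ almost surely. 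Taking expectations gives $\langle \vp,\nabla f_\delta(\vz)\rangle\le a$, a contradiction.

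The justification of differentiating under the expectation is the one subtle step. I would handle it with dominated convergence on difference quotients, which are bounded by $L$, together with a.e. differentiability of $f$.
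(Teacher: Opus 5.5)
Your proof is correct. The paper gives no argument of its own here: it imports (a)--(c) from Proposition~2.2 and (d) from Theorem~3.1 of \citet{lin2022gradient}. Your reconstruction is essentially the standard proof behind those cited results: Jensen for (a)--(b), the symmetric-difference volume estimate for the $c\sqrt{d}L/\delta$ gradient-Lipschitz constant, and Rademacher, dominated convergence and separation for $\nabla f_\delta(\vx,\vy)\in\partial_\delta f(\vx,\vy)$. (One small note on the sphere-representation route in (c): $f_\delta$ generally has no Hessian, and the $\sqrt d$ instead comes from testing the gradient difference against a fixed unit vector $\ve$ and using $\BE|\langle\ve,\vs\rangle|\le d^{-1/2}$.)
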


In addition, the smooth surrogate $f_\delta(\vx,\vy)$ preserves  the concavity in $\vy$.
\begin{proposition}\label{prop: concave of f_delta}
Under Assumptions \ref{asm:Lipschitz} and \ref{asm:concave}, the function $f_\delta(\vx,\vy)$ is concave in~$\vy$ on the domain $\fY$.
If we further suppose Assumption \ref{asm:strongly-concave} holds, the function $f_\delta(\vx,\vy)$ is $\mu$-strongly concave in $\vy$ on the domain $\fY$.
\end{proposition}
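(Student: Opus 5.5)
The plan is to write the smoothed function as an average of concave functions and use that concavity (and strong concavity) is preserved under averaging. Fix $\vx\in\fX$, and take $\vy_1,\vy_2\in\fY$ and $\lambda\in[0,1]$. By definition \eqref{eq:f_delta},
\begin{align*}
f_\delta(\vx,\lambda\vy_1+(1-\lambda)\vy_2)=\BE_{(\vu,\vv)}\big[f(\vx+\delta\vu,\lambda(\vy_1+\delta\vv)+(1-\lambda)(\vy_2+\delta\vv))\big].
\end{align*}
Here I use the identity $\lambda\vy_1+(1-\lambda)\vy_2+\delta\vv=\lambda(\vy_1+\delta\vv)+(1-\lambda)(\vy_2+\delta\vv)$, so the same perturbation $\vv$ is shared by both points.

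The key step is to check domain membership so that Assumption \ref{asm:concave} can be applied pointwise. For $(\vu,\vv)\in\BB^{d}(\vzero,1)$ we have $\|\vu\|\le 1$ and $\|\vv\|\le 1$. Hence $\vx+\delta\vu\in\fX_\delta$ and $\vy_i+\delta\vv\in\fY_\delta$ by the definition \eqref{dfn:XY-delta} of the augmented sets. Assumption \ref{asm:concave} then gives, for every realization $(\vu,\vv)$,
\begin{align*}
f(\vx+\delta\vu,\lambda(\vy_1+\delta\vv)+(1-\lambda)(\vy_2+\delta\vv))\ge\lambda f(\vx+\delta\vu,\vy_1+\delta\vv)+(1-\lambda)f(\vx+\delta\vu,\vy_2+\delta\vv).
\end{align*}

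Next I take expectations over $(\vu,\vv)$. All terms are integrable, since $f$ is Lipschitz on the compact-radius neighborhood by Assumption \ref{asm:Lipschitz} (indeed $f$ is $L$-Lipschitz as an expectation of the $F$'s), hence continuous and bounded on the ball of perturbations. By linearity of expectation, the right-hand side becomes $\lambda f_\delta(\vx,\vy_1)+(1-\lambda)f_\delta(\vx,\vy_2)$, which proves concavity.

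The strongly concave case follows the same way. Assumption \ref{asm:strongly-concave} adds the term $\frac{\mu\lambda(1-\lambda)}{2}\|(\vy_1+\delta\vv)-(\vy_2+\delta\vv)\|^2=\frac{\mu\lambda(1-\lambda)}{2}\|\vy_1-\vy_2\|^2$, in which the perturbation cancels. This term is deterministic, so it survives the expectation unchanged, and $f_\delta(\vx,\cdot)$ is $\mu$-strongly concave on $\fY$.

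The only mildly delicate point is the domain bookkeeping. The result is stated for $\vx\in\fX_\delta$ in the assumption but applied at $\vx+\delta\vu$ with $\vx\in\fX$, which is fine. If one wanted $\vx\in\fX_\delta$ as well, it would require a $2\delta$-enlargement, so I will restrict to $\vx\in\fX$, consistent with Proposition \ref{prop:smoothing}.
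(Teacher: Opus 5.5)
Your proposal is correct and follows essentially the same route as the paper: apply the (strong) concavity inequality pointwise at the shifted points $(\vx+\delta\vu,\vy_i+\delta\vv)$ with a shared perturbation, observe that the quadratic term $\frac{\mu\lambda(1-\lambda)}{2}\|\vy_1-\vy_2\|^2$ is unaffected by the shift, and take the expectation over $(\vu,\vv)\sim\mathrm{Unif}(\BB^{d}(\vzero,1))$ (the paper proves the strongly concave case and sets $\mu=0$ for the concave one). Your explicit checks that the shifted points lie in $\fX_\delta\times\fY_\delta$ and that the integrands are bounded are more careful than the paper's write-up, which loosely takes $\vu\in\fX$ and $\vv\in\fY$.
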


Propositions \ref{prop:smoothing} and \ref{prop: concave of f_delta} motivate us to introduce the following smooth nonconvex-concave minimax problem
\begin{align}\label{prob:f-delta}
\min_{\vx\in\mathcal{X}}\max_{\vy\in\mathcal{Y}} f_\delta(\vx,\vy).
\end{align}
We further introduce the primal function for $f_\delta(\vx,\vy)$, which is defined as
\begin{align}\label{def:Psi}
 \Psi_\delta(\vx) \triangleq \max_{\vy \in \mathcal{Y}} f_\delta(\vx, \vy). 
\end{align}
The function $\Psi_\delta$ is guaranteed to be smooth if the objective $f(\vx,\vy)$ is Lipschitz continuous and strongly concave in $\vy$ on feasible set $\fY$. 

\begin{proposition}[\citet{lin2020gradient}]\label{prop:smooth-Psi-delta}
Under Assumptions \ref{asm:Lipschitz} and \ref{asm:strongly-concave}, the function $\Psi_\delta(\vx)$ defined in equation \eqref{def:Psi} is $M_{\Psi_\delta}$-smooth on the feasible set $\fX$ with $M_{\Psi_\delta}=(1+M_{f_\delta}/\mu)M_{f_\delta}$,
where $M_{f_\delta}>0$ follows Proposition \ref{prop:smoothing}(c).
\end{proposition}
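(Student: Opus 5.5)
The plan is to deduce the statement from the classical result of \citet{lin2020gradient} for smooth nonconvex-strongly-concave objectives, applied to $f_\delta$ instead of $f$. That result says: if $g(\vx,\vy)$ is $\ell$-smooth jointly and $\mu$-strongly concave in $\vy$ over a convex compact $\fY$, then $\max_{\vy\in\fY} g(\cdot,\vy)$ is $(\ell+\ell^2/\mu)$-smooth. Here the two hypotheses come from earlier results. Proposition~\ref{prop:smoothing}(c) gives that $f_\delta$ is $M_{f_\delta}$-smooth, and this only needs Assumption~\ref{asm:Lipschitz}. Proposition~\ref{prop: concave of f_delta} gives that $f_\delta(\vx,\cdot)$ is $\mu$-strongly concave on $\fY$ under Assumption~\ref{asm:strongly-concave}.

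To keep the argument self-contained, I would carry out the steps as follows.

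\textbf{Step 1 (unique maximizer).} For each $\vx\in\fX$, strong concavity and compactness of $\fY$ give a unique maximizer $\vy^*(\vx)=\argmax_{\vy\in\fY} f_\delta(\vx,\vy)$.

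\textbf{Step 2 (Lipschitz maximizer).} I would show $\vy^*$ is $(M_{f_\delta}/\mu)$-Lipschitz. Write the first-order optimality conditions at $\vy_1=\vy^*(\vx_1)$ and $\vy_2=\vy^*(\vx_2)$:
\begin{align*}
\langle \nabla_y f_\delta(\vx_1,\vy_1),\vy-\vy_1\rangle\le 0
\quad\text{and}\quad
\langle \nabla_y f_\delta(\vx_2,\vy_2),\vy-\vy_2\rangle\le 0
\qquad\text{for all }\vy\in\fY.
\end{align*}
Plug $\vy=\vy_2$ into the first and $\vy=\vy_1$ into the second, then add. Strong monotonicity of $-\nabla_y f_\delta(\vx_1,\cdot)$ then gives
\begin{align*}
\mu\|\vy_1-\vy_2\|^2\le\langle \nabla_y f_\delta(\vx_1,\vy_2)-\nabla_y f_\delta(\vx_2,\vy_2),\vy_1-\vy_2\rangle\le M_{f_\delta}\|\vx_1-\vx_2\|\|\vy_1-\vy_2\|.
\end{align*}

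\textbf{Step 3 (Danskin).} By Danskin's theorem, valid since $\vy^*$ is unique and $f_\delta$ is continuously differentiable, we have $\nabla\Psi_\delta(\vx)=\nabla_x f_\delta(\vx,\vy^*(\vx))$.

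\textbf{Step 4 (smoothness constant).} Combining Steps 2 and 3,
\begin{align*}
\|\nabla\Psi_\delta(\vx_1)-\nabla\Psi_\delta(\vx_2)\|\le M_{f_\delta}\big(\|\vx_1-\vx_2\|+\|\vy^*(\vx_1)-\vy^*(\vx_2)\|\big)\le M_{f_\delta}(1+M_{f_\delta}/\mu)\|\vx_1-\vx_2\|.
\end{align*}
This is exactly $M_{\Psi_\delta}$.

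The main obstacle is a domain technicality. Proposition~\ref{prop:smoothing} is stated for points of $\fX\times\fY$, and Danskin's theorem needs differentiability in a neighborhood of $\vx$. I would handle this by noting that $f_\delta$ is defined and smooth on a neighborhood of $\fX\times\fY$, because $f$ is Lipschitz on $\fX_\delta\times\fY_\delta$. The smoothing argument of \citet{lin2022gradient} then applies at any point whose $\delta$-ball stays inside the augmented sets. If needed, I would apply it on $\fX\times\fY$ directly, using a one-sided Danskin argument restricted to the convex set $\fX$.
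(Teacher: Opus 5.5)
Your proposal is correct and follows the same route as the paper. The paper gives no separate proof; it invokes the result of \citet{lin2020gradient} for objectives that are jointly smooth and strongly concave in $\vy$, applied to $f_\delta$ via Proposition~\ref{prop:smoothing}(c) and Proposition~\ref{prop: concave of f_delta}, and your Steps 1--4 (unique maximizer, $(M_{f_\delta}/\mu)$-Lipschitz maximizer from the summed optimality conditions, Danskin's theorem, and the final triangle-inequality bound) are precisely the standard argument behind that lemma, while the boundary-of-domain technicality you flag is one the paper does not address either.
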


Following the setting of Proposition \ref{prop:smooth-Psi-delta},
we define the optimal solution mappings 
\begin{align}\label{eq:y-star}
    \vy^*(\vx) \triangleq \argmax_{\vy\in\fY} f(\vx,\vy)
    \qquad\text{and}\qquad
    \vy_\delta^*(\vx) \triangleq \argmax_{\vy\in\fY} f_\delta(\vx,\vy).
\end{align}
Note that the strong concavity in $\vy$ guarantees both $\vy^*(\vx)$ and $\vy_\delta^*(\vx)$ are uniquely defined for given $\vx$.
We also bound the difference between  $\vy^*(\vx)$ and $\vy_\delta^*(\vx)$ as follows.
\begin{lemma}\label{distance between y_star and y_star_delta}
    Under Assumptions \ref{asm:Lipschitz} and \ref{asm:strongly-concave}, for all $\vx\in\fX$, it holds
    \begin{align*}
        \|\vy^*(\vx)-\vy_\delta^*(\vx)\|\leq 2\sqrt{\frac{L\delta}{\mu}}.    
    \end{align*}
\end{lemma}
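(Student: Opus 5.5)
Write $\vy^*=\vy^*(\vx)$ and $\vy^*_\delta=\vy^*_\delta(\vx)$. The plan is to use the quadratic growth of a strongly concave function around its constrained maximizer, and then compare $f$ with $f_\delta$ via Proposition~\ref{prop:smoothing}(a).

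\textbf{Step 1 (quadratic growth).} By Proposition~\ref{prop: concave of f_delta}, $f_\delta(\vx,\cdot)$ is $\mu$-strongly concave on $\fY$, and $\vy^*_\delta$ maximizes it over the convex set $\fY$. I claim that for every $\vy\in\fY$,
\begin{align*}
f_\delta(\vx,\vy^*_\delta)-f_\delta(\vx,\vy)\ge \frac{\mu}{2}\|\vy-\vy^*_\delta\|^2 .
\end{align*}
To see this without differentiability, apply Assumption~\ref{asm:strongly-concave} (transferred to $f_\delta$) with $\vy_1=\vy$, $\vy_2=\vy^*_\delta$ and weight $\lambda\in(0,1]$. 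The point $\lambda\vy+(1-\lambda)\vy^*_\delta$ lies in $\fY$, so its value is at most $f_\delta(\vx,\vy^*_\delta)$. This gives
\begin{align*}
f_\delta(\vx,\vy^*_\delta)\ge \lambda f_\delta(\vx,\vy)+(1-\lambda)f_\delta(\vx,\vy^*_\delta)+\frac{\mu\lambda(1-\lambda)}{2}\|\vy-\vy^*_\delta\|^2 .
\end{align*}
Rearranging and dividing by $\lambda$ gives $f_\delta(\vx,\vy^*_\delta)-f_\delta(\vx,\vy)\ge \frac{\mu(1-\lambda)}{2}\|\vy-\vy^*_\delta\|^2$. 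Letting $\lambda\to 0^+$ yields the claim.

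\textbf{Step 2 (comparison).} Take $\vy=\vy^*$ in Step 1 and use $|f_\delta-f|\le\delta L$ on $\fX\times\fY$ twice:
\begin{align*}
\frac{\mu}{2}\|\vy^*-\vy^*_\delta\|^2\le f_\delta(\vx,\vy^*_\delta)-f_\delta(\vx,\vy^*)\le f(\vx,\vy^*_\delta)-f(\vx,\vy^*)+2\delta L\le 2\delta L .
\end{align*}
The last inequality holds because $\vy^*$ maximizes $f(\vx,\cdot)$ over $\fY$, so the difference $f(\vx,\vy^*_\delta)-f(\vx,\vy^*)$ is nonpositive.

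\textbf{Step 3 (conclusion).} Step 2 gives $\|\vy^*-\vy^*_\delta\|^2\le 4L\delta/\mu$, that is, $\|\vy^*(\vx)-\vy^*_\delta(\vx)\|\le 2\sqrt{L\delta/\mu}$, as stated.

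\textbf{Main obstacle.} The only delicate point is Step 1. Because $\fY$ is constrained, we cannot use $\nabla_\vy f_\delta(\vx,\vy^*_\delta)=\vzero$. The $\lambda\to0$ argument avoids this: it needs only the convexity of $\fY$ and the optimality of $\vy^*_\delta$. An alternative is to use smoothness of $f_\delta$ together with the first-order optimality condition $\langle\nabla_\vy f_\delta(\vx,\vy^*_\delta),\vy-\vy^*_\delta\rangle\le0$.
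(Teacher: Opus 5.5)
Your proof is correct and gives the same constant. It uses the same two ingredients as the paper: quadratic growth at a maximizer, and the $\pm\delta L$ sandwich from Proposition~\ref{prop:smoothing}(a). The difference is that you apply quadratic growth to $f_\delta$ rather than to $f$.

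The paper argues by contradiction and uses quadratic growth of the \emph{nonsmooth} function $f(\vx,\cdot)$ around $\vy^*(\vx)$. It writes $f(\vx,\vy^*_\delta)\le f(\vx,\vy^*)+\langle\vg_y,\vy^*_\delta-\vy^*\rangle-\frac{\mu}{2}\|\vy^*_\delta-\vy^*\|^2$ for some supergradient $\vg_y\in\partial_y f(\vx,\vy^*(\vx))$. It then discards the inner product ``by optimality of $\vy^*(\vx)$''. That step tacitly requires choosing a supergradient that satisfies the constrained first-order condition $\langle\vg_y,\vy-\vy^*\rangle\le 0$ for all $\vy\in\fY$. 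Such a choice exists, by the normal-cone optimality condition for a concave function that is finite on a neighborhood of $\fY$, but the paper does not say so.

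You instead prove quadratic growth of $f_\delta(\vx,\cdot)$ around $\vy^*_\delta$ directly from the definition of strong concavity, letting $\lambda\to0^+$. This needs no subdifferential calculus, only convexity of $\fY$. The cost is an appeal to Proposition~\ref{prop: concave of f_delta} to transfer strong concavity to $f_\delta$.

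Your $\lambda\to0^+$ argument also works verbatim for $f$ itself around $\vy^*(\vx)$, since Assumption~\ref{asm:strongly-concave} holds on $\fY_\delta\supseteq\fY$. Doing it that way would give the paper's version in direct form, without Proposition~\ref{prop: concave of f_delta} and without the unstated optimality-condition step.
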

Based on Lemma \ref{distance between y_star and y_star_delta},
we further bound the difference between $\Phi$ and $\Psi_\delta$ as follows.
\begin{lemma}\label{distance between Phi Phi_delta}
Under Assumptions \ref{asm:Lipschitz} and \ref{asm:strongly-concave}, 
for all $\vx\in\fX$, it holds 
$|\Phi(\vx)-\Psi_\delta(\vx)|\leq \delta L$.
\end{lemma}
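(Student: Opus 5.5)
The bound $|\Phi(\vx)-\Psi_\delta(\vx)|\le\delta L$ follows from a sandwich argument that uses only the uniform closeness of $f$ and $f_\delta$ from Proposition~\ref{prop:smoothing}(a). Strong concavity matters only in guaranteeing that the maximizers $\vy^*(\vx)$ and $\vy^*_\delta(\vx)$ in \eqref{eq:y-star} exist; Lemma~\ref{distance between y_star and y_star_delta} is not needed. One caveat: Proposition~\ref{prop:smoothing}(a) is stated with constant $L$, which is the Lipschitz constant of $f$ itself. By Jensen's inequality and Assumption~\ref{asm:Lipschitz}, $f$ is indeed $L$-Lipschitz on $\fX_\delta\times\fY_\delta$, so $|f_\delta(\vx,\vy)-f(\vx,\vy)|\le \delta L$ holds for every $(\vx,\vy)\in\fX\times\fY$.

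Fix $\vx\in\fX$. For the upper bound, evaluate $f_\delta$ at the maximizer of $f$:
\begin{align*}
\Phi(\vx)=f(\vx,\vy^*(\vx))\le f_\delta(\vx,\vy^*(\vx))+\delta L\le \max_{\vy\in\fY}f_\delta(\vx,\vy)+\delta L=\Psi_\delta(\vx)+\delta L.
\end{align*}
For the lower bound, evaluate $f$ at the maximizer of $f_\delta$:
\begin{align*}
\Psi_\delta(\vx)=f_\delta(\vx,\vy^*_\delta(\vx))\le f(\vx,\vy^*_\delta(\vx))+\delta L\le\Phi(\vx)+\delta L.
\end{align*}
Together these give $|\Phi(\vx)-\Psi_\delta(\vx)|\le\delta L$.

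The argument is short, and the only point needing care is the existence of the maxima. Existence follows from compactness of $\fY$ (Assumption~\ref{asm:convex-closed}) together with continuity of $f$ and $f_\delta$ in $\vy$, or from the strong concavity noted after \eqref{eq:y-star}. If one prefers to avoid attainment altogether, the same two inequalities can be applied with suprema over $\fY$ directly, since $|\sup_{\vy} a(\vy)-\sup_{\vy} b(\vy)|\le\sup_{\vy}|a(\vy)-b(\vy)|$.
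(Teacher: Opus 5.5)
Your proof is correct and is essentially the paper's argument. The paper also sandwiches via $\Psi_\delta(\vx)=f_\delta(\vx,\vy_\delta^*(\vx))\le f(\vx,\vy_\delta^*(\vx))+\delta L\le\Phi(\vx)+\delta L$ and $\Phi(\vx)=f(\vx,\vy^*(\vx))\le f_\delta(\vx,\vy^*(\vx))+\delta L\le\Psi_\delta(\vx)+\delta L$, after deriving the one-sided bounds between $f$ and $f_\delta$ from Lipschitz continuity. Your remarks that $f$ is $L$-Lipschitz by Jensen's inequality, and that one can work with suprema to avoid attainment, are harmless refinements.
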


\section{Finding Approximate GSSPs for the Objective Function}\label{sec: f}

In this section, we first propose gradient-free algorithm for finding $(\eta_x,\eta_y,\delta, \epsilon)$-Goldstein saddle stationary points (GSSPs) with respect to the objective function $f(\vx,\vy)$, then provide the complexity analysis for our algorithm. 

\subsection{Projected Gradient-Free Descent Ascent}

In the view of surrogate problem \eqref{prob:f-delta}, we introduce zeroth-order gradient estimators as follows \citep{yousefian2012stochastic}  
\begin{align}\label{eq:gx-gy-w-xi}
\begin{split}    
&\!\!\vg_x(\vx,\vy;\vw_x,\vw_y,\vxi)     
\triangleq \frac{d(F(\vx + \delta \vw_x,\vy+\delta \vw_y;\vxi) - F(\vx - \delta \vw_x,\vy-\delta \vw_y;\vxi))}{2\delta}
\!\cdot\! \vw_x \in\BR^{d_x}, \\
&\!\!\vg_y(\vx,\vy;\vw_x,\vw_y,\vxi) 
\triangleq \frac{d(F(\vx + \delta \vw_x,\vy+\delta \vw_y;\vxi) - F(\vx - \delta \vw_x,\vy-\delta \vw_y;\vxi))}{2\delta} \!\cdot\! \vw_y \in\BR^{d_y},
\end{split}
\end{align}
where $[\vw_x;\vw_y]\in\BR^{d}$ is sampled from the uniform distribution on the unit sphere, i.e., $\BS^{d-1}=\{\vw\in\BR^d:\norm{\vw}=1\}$, where $d=d_x+d_y$. 
Note that the above random vectors $\vg_x(\vx,\vy;\vw_x,\vw_y,\vxi)$ and $\vg_y(\vx,\vy;\vw_x,\vw_y,\vxi)$ correspond to an unbiased gradient estimator for the smooth surrogate $f_\delta(\vx,\vy)$ with bounded variance \citep{lin2022gradient}.

\begin{proposition}[{\citet[Lemma D.1]{lin2022gradient}}]\label{prop:g-unbiased-variance}
Under Assumption~\ref{asm:Lipschitz}, we define
\begin{align}\label{eq:g-w-xi}
\begin{split}    
    \vg(\vx,\vy;\vw_x,\vw_y,\vxi) = \begin{bmatrix}
        \vg_x(\vx,\vy;\vw_x,\vw_y,\vxi)  \\
        \vg_y(\vx,\vy;\vw_x,\vw_y,\vxi) 
    \end{bmatrix}
\end{split}
\end{align}
by following the notation in equation \eqref{eq:gx-gy-w-xi}, 
then it holds
\[
\mathbb{E}
\big[ \vg(\vx,\vy;\vw_x,\vw_y,\vxi) \big]
= \nabla f_\delta(\vx,\vy) 
~~\text{and}~~
\mathbb{E}
\left[\big\|
\vg(\vx,\vy;\vw_x,\vw_y,\xi)
- \nabla f_\delta(\vx,\vy)
\big\|^2\right]
\le 16\sqrt{2\pi}d L^2.
\]
\end{proposition}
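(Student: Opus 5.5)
The plan is to reduce the statement to the standard single-variable fact for the stochastic two-point estimator on the joint variable $\vz=(\vx,\vy)\in\BR^d$. We set $\vw=[\vw_x;\vw_y]\sim\mathrm{Unif}(\BS^{d-1})$ and $h(\vz;\vxi)=F(\vx,\vy;\vxi)$. Then $\vg(\vx,\vy;\vw_x,\vw_y,\vxi)=\frac{d}{2\delta}\big(h(\vz+\delta\vw;\vxi)-h(\vz-\delta\vw;\vxi)\big)\vw$, which is exactly the estimator of \citet{lin2022gradient} in dimension $d$. By Assumption~\ref{asm:Lipschitz}, $h(\cdot;\vxi)$ is $L(\vxi)$-Lipschitz on $\fX_\delta\times\fY_\delta$, which contains every $\vz\pm\delta\vw$ for $\vz\in\fX\times\fY$.

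For unbiasedness, we first condition on $\vxi$. The sphere-to-ball identity (Stokes/divergence theorem) gives $\BE_{\vw}\big[\frac{d}{\delta}h(\vz+\delta\vw;\vxi)\vw\big]=\nabla \BE_{\vu\sim\mathrm{Unif}(\BB^d)}[h(\vz+\delta\vu;\vxi)]$. The symmetry $\vw\mapsto-\vw$ shows that the antithetic term contributes the same amount, so the two halves average. Next, we take the expectation over $\vxi$ and exchange gradient and expectation. This exchange is justified by dominated convergence, since each smoothed $h(\cdot;\vxi)$ is $L(\vxi)$-Lipschitz and $\BE L(\vxi)\le L$. The result is $\nabla f_\delta(\vx,\vy)$ by the definition \eqref{eq:f_delta}.

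For the variance, we bound it by the second moment: $\BE\|\vg-\nabla f_\delta\|^2\le\BE\|\vg\|^2=\frac{d^2}{4\delta^2}\BE\big[(h(\vz+\delta\vw;\vxi)-h(\vz-\delta\vw;\vxi))^2\big]$, using $\|\vw\|=1$. We again condition on $\vxi$. The map $\vw\mapsto h(\vz+\delta\vw;\vxi)$ on $\BS^{d-1}$ is $\delta L(\vxi)$-Lipschitz. For such a map $\phi$, the antithetic difference satisfies $\BE(\phi(\vw)-\phi(-\vw))^2\le 4\BE(\phi(\vw)-\BE\phi)^2$. Concentration of measure on the sphere (Lipschitz functions have sub-Gaussian tails with variance proxy $\propto\delta^2L(\vxi)^2/d$) gives $\BE(\phi-\BE\phi)^2\le c\,\delta^2L(\vxi)^2/d$. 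Multiplying through yields $\BE\|\vg\|^2\le c' d L(\vxi)^2$. Taking the expectation over $\vxi$ and using $\BE L(\vxi)^2\le L^2$ then gives $\fO(dL^2)$. We would track constants via the tail bound $\Pr(|\phi-\BE\phi|\ge t)\le 2\sqrt{2\pi}e^{-dt^2/(8\delta^2L(\vxi)^2)}$; integrating this tail produces the stated $16\sqrt{2\pi}$.

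The main obstacle is this concentration step with the precise constant. The rest is routine, and the result can essentially be invoked as \citet[Lemma D.1]{lin2022gradient} once the joint-variable reduction and the domain containment in $\fX_\delta\times\fY_\delta$ are noted.
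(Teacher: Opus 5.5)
Your proposal is correct and follows essentially the argument behind Lemma D.1 of \citet{lin2022gradient}, which the paper simply invokes without reproving it. Treating $(\vw_x,\vw_y)$ as a single direction on $\BS^{d-1}$ matches the joint-ball smoothing in \eqref{eq:f_delta}; unbiasedness then follows from the sphere-to-ball identity plus antithetic symmetry, and the second-moment bound follows from the symmetrization $\BE(\phi(\vw)-\phi(-\vw))^2\le 4\,\BE(\phi(\vw)-\BE\phi)^2$ together with sphere concentration, whose integrated tail gives exactly $\frac{d^2}{4\delta^2}\cdot 4\cdot\frac{16\sqrt{2\pi}\delta^2L(\vxi)^2}{d}=16\sqrt{2\pi}\,dL(\vxi)^2$ before averaging over $\vxi$ (with your observation that all points $\vz\pm\delta\vw$ lie in $\fX_\delta\times\fY_\delta$ being the only adaptation needed for the paper's localized Lipschitz assumption).
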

According to Proposition \ref{prop:g-unbiased-variance}, we further introduce mini-batch sampling to reduce the variance in zeroth-order gradient estimators \citep{chen2023faster}.
\begin{corollary}[{\citet[Corollary 1]{chen2023faster}}]\label{Variance of the estimator}
Following the settings and notations of Proposition \ref{prop:g-unbiased-variance}, we construct the gradient estimator 
\begin{align}\label{eq:g-unbiased-variance}
\vg(\vx,\vy; \fS) = \frac{1}{b} \sum_{i=1}^b \vg(\vx,\vy;\vw_{x,i},\vw_{y,i},\vxi_i) 
\quad\text{with}\quad
\fS = \{(\vw_{x,i},\vw_{y,i}\,;\vxi_i)\}_{i=1}^b,
\end{align}
where $(\vw_{x,1},\vw_{y,1}),\dots,(\vw_{x,b},\vw_{y,b})$ and $\vxi_1,\dots,\vxi_b$ are independently sampled from the uniform distribution on the unit sphere $\BS^{d-1}$ and $\fD$, respectively,
then it holds
\begin{align*}
\mathbb{E}_\fS \left[\left\| \vg(\vx,\vy; \fS) - \nabla f_\delta(\vx,\vy) \right\|^2\right] \leq \frac{16 \sqrt{2\pi} d L^2}{b}.    
\end{align*}
\end{corollary}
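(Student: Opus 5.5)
The corollary is the standard mini-batch variance reduction applied to the single-sample bound of Proposition~\ref{prop:g-unbiased-variance}. I will write the centered sum, expand the square, and show that the cross terms vanish by independence. What remains is $1/b^2$ times $b$ copies of the single-sample variance.

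\textbf{Centering.} Fix $(\vx,\vy)\in\fX\times\fY$ and set
\[
\ve_i=\vg(\vx,\vy;\vw_{x,i},\vw_{y,i},\vxi_i)-\nabla f_\delta(\vx,\vy),\qquad i=1,\dots,b,
\]
so that $\vg(\vx,\vy;\fS)-\nabla f_\delta(\vx,\vy)=\frac1b\sum_{i=1}^b\ve_i$. The pairs $(\vw_{x,i},\vw_{y,i})$ are i.i.d.\ uniform on $\BS^{d-1}$, the $\vxi_i$ are i.i.d.\ from $\fD$, and the two families are mutually independent. Hence the $\ve_i$ are i.i.d. By Proposition~\ref{prop:g-unbiased-variance}, each satisfies $\BE[\ve_i]=\vzero$ and $\BE\|\ve_i\|^2\le 16\sqrt{2\pi}dL^2$.

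\textbf{Expanding the square.} We have
\[
\BE\Big\|\frac1b\sum_{i=1}^b\ve_i\Big\|^2=\frac1{b^2}\Big(\sum_{i=1}^b\BE\|\ve_i\|^2+\sum_{i\neq j}\BE\langle\ve_i,\ve_j\rangle\Big).
\]
For $i\ne j$, independence gives $\BE\langle\ve_i,\ve_j\rangle=\langle\BE\ve_i,\BE\ve_j\rangle=0$. This requires $\ve_i$ to be integrable, which follows from its finite second moment. The diagonal sum is at most $b\cdot16\sqrt{2\pi}dL^2$, so the whole expression is bounded by $16\sqrt{2\pi}dL^2/b$, which is the claim.

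\textbf{Main point to check.} The only real care is that the unbiasedness and variance bound of Proposition~\ref{prop:g-unbiased-variance} apply to each sample individually. This needs the evaluation points $\vx\pm\delta\vw_x$ and $\vy\pm\delta\vw_y$ to lie in $\fX_\delta\times\fY_\delta$, where Assumption~\ref{asm:Lipschitz} holds. That is immediate because $\|(\vw_x,\vw_y)\|=1$ implies $\|\vw_x\|,\|\vw_y\|\le1$. Once this is verified, the argument is just the elementary computation above.
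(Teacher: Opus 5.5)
Your proof is correct and takes the intended approach. The paper gives no proof of its own and cites Corollary~1 of \citet{chen2023faster}, which is exactly this computation: the per-sample errors are independent and mean-zero by Proposition~\ref{prop:g-unbiased-variance}, the cross terms vanish, and the single-sample bound $16\sqrt{2\pi}dL^2$ is divided by $b$. Your check that the evaluation points stay in $\fX_\delta\times\fY_\delta$ is a harmless extra detail that the proposition's hypotheses already cover.
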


\begin{algorithm}[t]
\caption{Projected Gradient-Free Descent Ascent (PGFDA)}
\label{alg:f}
\begin{algorithmic}[1]    
\STATE \textbf{Input:} initial point $(\vx_0, \vy_{-1})\in\fX\times\fY$, step sizes $\eta_x, \eta_y,\tilde\eta_y>0$, iterations number $T, K_{\rm in}, K_{\rm out}\in\BN$, smoothing parameter $\delta>0$, probability $p\in(0,1]$, batch sizes $b,\tilde b\in\BN$.  \\[0.02cm]
\STATE $\vy_0=\text{Gradient-Free-Descent}(-F(\vx_0,\,\cdot\,;\,\cdot\,),\vy_{-1},\mu, \delta, K_{\rm in})$ \label{line:y0} \\[0.02cm]
\STATE \textbf{for} $t = 0,1,2,\dots,T-1$ \textbf{do} \\[0.02cm]
\STATE \qquad sample $\zeta_t \sim \mathrm{Bernoulli}(p)$  \\[0.02cm]
\STATE \qquad \textbf{if} {$t=0$ \textbf{or} $\zeta_t = 1$} \textbf{then} \\[0.02cm]
\STATE \qquad\qquad independently sample $(\vw_{x,t,i},\vw_{y,t,i})\sim{\rm Unif}(\BS^{d-1})$ and $\vxi_{t,i}\sim\fD$ for all $i\in[{\tilde b}]$ \\[0.02cm]
\STATE \qquad\qquad $\tilde\fS_t = \{(\vw_{x,t,i},\vw_{y,t,i}\,;\vxi_{t,i})\}_{i=1}^{\tilde b}$ \\[0.02cm]
\STATE \qquad\qquad $\displaystyle{ 
        \begin{bmatrix}
            \vu_t \\ \vv_t
        \end{bmatrix}
        = \begin{bmatrix}        
            \vg_x(\vx_t,\vy_t;\tilde\fS_t) \\ 
            \vg_y(\vx_t,\vy_t;\tilde\fS_t)
        \end{bmatrix}}$ \\[0.02cm]
\STATE \qquad \textbf{else} \\[0.02cm]
\STATE \qquad\qquad independently sample  $(\vw_{x,t,i},\vw_{y,t,i})\sim{\rm Unif}(\BS^{d-1})$ and $\vxi_{t,i}\sim\fD$ for all $i\in[b]$ \\[0.02cm]
\STATE \qquad\qquad $\fS_t = \{(\vw_{x,t,i},\vw_{y,t,i}\,;\vxi_{t,i})\}_{i=1}^{b}$ \\[0.02cm] 
\STATE \qquad\qquad  $\displaystyle{ 
        \begin{bmatrix}
            \vu_t \\ \vv_t
        \end{bmatrix}
        = \begin{bmatrix}
            \vu_{t-1} \\ \vv_{t-1}
        \end{bmatrix}
        + \begin{bmatrix}
            \vg_x(\vx_t,\vy_t;\fS_t) \\ 
            \vg_y(\vx_t,\vy_t;\fS_t)
        \end{bmatrix}        
        - \begin{bmatrix}
            \vg_x(\vx_{t-1},\vy_{t-1};\fS_t) \\ 
            \vg_y(\vx_{t-1},\vy_{t-1};\fS_t)
        \end{bmatrix}        
        }$ \\[0.1cm]
\STATE \qquad \textbf{end if} \\[0.02cm]
\STATE \qquad $\displaystyle{
    \begin{bmatrix} \vx_{t+1} \\ \vy_{t+1} \end{bmatrix}
    = \begin{bmatrix}
            \Pi_\mathcal{X}(\vx_t - \eta_x\vu_t) \\
            \Pi_\mathcal{Y}(\vy_t + \eta_y\vv_t)
    \end{bmatrix}}$ \\[0.02cm]
\STATE \textbf{end for} \\[0.02cm]
\STATE $j\sim{\rm Unif}(\{0,\dots,T-1\})$, \quad $\vx_{\mathrm{out}}=\vx_j$  \label{line:end-for} \\[0.02cm]
\STATE $\vy_{\mathrm{out}}=\text{Gradient-Free-Descent}(-F(\vx_j,\,\cdot\,;\,\cdot\,), \vy_j,\mu,\delta, K_{\rm out})$ \label{line:haty}  \\[0.02cm]
\STATE \textbf{return} $(\vx_{\mathrm{out}},\vy_{\mathrm{out}})$
\end{algorithmic}
\end{algorithm}

\begin{algorithm}[H]
\caption{Projected-Gradient-Free-Descent$({H(\,\cdot\,;\,\cdot\,),\tilde\vy_0, \mu, \nu, K })$}
\label{alg:minimization}
\begin{algorithmic}[1]   
\STATE \textbf{for} $k=0,1,\dots,K-1$
\STATE \qquad independently sample $\vw_{k}\sim{\rm Unif}(\BS^{d_y-1})$ and $\vxi_k\sim\fD$ \\[0.1cm]
\STATE \qquad $\displaystyle{\vv_k = \frac{d_y}{2\nu}(H(\vy_k + \nu \vw_k;\vxi_k) - H(\vy_k - \nu \vw_k;\vxi_k))\vw_k }$ \\[0.05cm] 
\STATE  \qquad $\displaystyle{\eta_k=\frac{2}{\mu(k+1)}}$\\[0.07cm] 
\STATE \qquad  $\vy_{k+1} = \Pi_\fY (\vy_k - \eta_k \vv_k)$ \\[0.05cm] 
\STATE \textbf{end for} \\[0.05cm] 
\STATE \textbf{return} ${\vy_{\mathrm{out}}=\frac{2}{K(K-1)}\sum_{k=0}^{K-1}k\vy_k}$
\end{algorithmic}  
\end{algorithm}

\begin{remark}
The techniques of two-point zeroth-order gradient estimator \citep{yousefian2012stochastic,duchi2012randomized,nesterov2017random,shamir2017optimal} and variance reduction \citep{li2021page,fang2018spider} have been widely used in nonconvex minimax optimization \citep{xu2024derivative,huang2022accelerated,wang2023zeroth,xu2020gradient,luo2020stochastic,
yang2020catalyst,yan2020optimal,zhang2022sapd+,yang2020global,chen2024efficient,chen2022faster}, while all existing work heavily depends on the smoothness for the objective or the component in composite structure. 
In contrast, this work is the first to explore these techniques in the general nonsmooth case.
\end{remark}

Consequently, we construct the variance reduced gradient estimator $(\vu_t,\vv_t)$ as follows
\begin{align*}
\begin{bmatrix}
    \vu_t \\ \vv_t
\end{bmatrix}
= \begin{bmatrix}
    \vu_{t-1} \\ \vv_{t-1}
  \end{bmatrix}
   + \begin{bmatrix}
        \vg_x(\vx_t,\vy_t;\fS_t) \\ 
        \vg_y(\vx_t,\vy_t;\fS_t)
    \end{bmatrix}        
   - \begin{bmatrix}
            \vg_x(\vx_{t-1},\vy_{t-1};\fS_t) \\ 
            \vg_y(\vx_{t-1},\vy_{t-1};\fS_t)
   \end{bmatrix},
\end{align*}
where $\fS_t$ consists of the random sample at the $t$-th iteration.
This results in our Projected Gradient-Free Descent Ascent (PGFDA) method, which is detailed in Algorithm \ref{alg:f}.

\begin{remark}
Besides iterating on directions $\vu_t\in\BR^{d_x}$ and $\vv_t\in\BR^{d_y}$, Lines \ref{line:y0} and \ref{line:haty} of proposed PGFDA (Algorithm \ref{alg:f}) also introduce the subroutine of Projected Gradient-Free Descent (Algorithm \ref{alg:minimization}) to maximize the function value over $\vy\in\BR^{d_y}$ for given $\vx_0\in\BR^{d_x}$ and~$\vx_{\rm out}\in\BR^{d_x}$ , which is necessary to guarantee the norms of subgradient mapping with respect to $\vy$ defined in equation \eqref{eq:gradient-mapping-xy} be sufficient small.        
\end{remark}

\subsection{The Complexity Analysis for PGFDA}\label{sec:convergence-PGFDA}

We first consider finding approximate GSSPs in the nonconvex-strongly-concave setting.
For the smooth surrogate $f_\delta(\vx,\vy)$ and its primal function $\Psi_\delta(\vx)=\max_{\vy\in\fY}f_\delta(\vx,\vy)$ defined in equations \eqref{eq:f_delta} and \eqref{def:Psi}, we 
denote their gradient mappings as
\begin{align}\label{eq:mapping-smooth}
\begin{split}
    & G_x(\vx, \vy,\nabla_x f_\delta(\vx,\vy),\eta_x) = \frac{\vx-\Pi_\mathcal{X}\left(\vx - \eta_x \nabla_x f_\delta(\vx,\vy)\right) }{\eta_x}, \\
    & G_y(\vx, \vy,\nabla_y f_\delta(\vx,\vy),\eta_y) = \frac{\Pi_\mathcal{Y}\left(\vy + \eta_y \nabla_y f_\delta(\vx,\vy)\right) - \vy}{\eta_y}, \\
    & G(\vx, \nabla \Psi_\delta(\vx),\eta_x) = \frac{\vx-\Pi_\mathcal{X}\left(\vx - \eta_x \nabla\Psi_\delta(\vx)\right) }{\eta_x},    
\end{split}    
\end{align}
where $\eta_x,\eta_y>0$.
We connect the above $G_x(\vx, \vy,\nabla_x f_\delta(\vx,\vy),\eta_x)$, 
$G_y(\vx, \vy,\nabla_y f_\delta(\vx,\vy),\eta_y)$, 
and $G(\vx, \nabla \Psi_\delta(\vx),\eta_x)$ as follows.

\begin{lemma}\label{lemma: gradient mapping between Psi_delta and f_delta}
Under Assumptions \ref{asm:Lipschitz} and \ref{asm:strongly-concave}, 
it holds  
\begin{align*}
\norm{G_x(\vx,\vy,\nabla_x f_\delta(\vx,\vy),\eta_x)}
\leq\norm{G(\vx,\nabla \Psi_\delta(\vx),\eta_x)} + \frac{M_{f_\delta}}{\mu}\| G_y(\vx, \vy,\nabla_y f_\delta(\vx,\vy),\tilde\eta_y) \|,
\end{align*}
for all $\vx\in\fX$, $\vy\in\fY$, $\eta_x>0$ and $\tilde\eta_y\in(0,1/M_{f_\delta}]$, 
where 
$M_{f_\delta}>0$ follows Proposition \ref{prop:smoothing}(c).

\end{lemma}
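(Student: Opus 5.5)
The plan is to compare the two projected-gradient mappings through the nonexpansiveness of the projection, and then to convert the resulting gap in $\vy$ into the $\vy$-mapping by a strong-concavity error bound. Fix $\vx\in\fX$ and write $\phi(\cdot)=f_\delta(\vx,\cdot)$. By Proposition~\ref{prop: concave of f_delta}, $\phi$ is $\mu$-strongly concave on $\fY$, and by Proposition~\ref{prop:smoothing}(c) it is $M_{f_\delta}$-smooth.

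\textbf{Step 1 (Danskin).} Since $\vy_\delta^*(\vx)$ is the unique maximizer, the Danskin-type argument underlying Proposition~\ref{prop:smooth-Psi-delta} gives
\[
\nabla\Psi_\delta(\vx)=\nabla_x f_\delta(\vx,\vy_\delta^*(\vx)).
\]

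\textbf{Step 2 (comparison in $\vx$).} By the triangle inequality and nonexpansiveness of $\Pi_\fX$,
\begin{align*}
\norm{G_x(\vx,\vy,\nabla_x f_\delta(\vx,\vy),\eta_x)}
&\le \norm{G(\vx,\nabla\Psi_\delta(\vx),\eta_x)} + \frac{1}{\eta_x}\norm{\Pi_\fX(\vx-\eta_x\nabla_x f_\delta(\vx,\vy))-\Pi_\fX(\vx-\eta_x\nabla\Psi_\delta(\vx))}\\
&\le \norm{G(\vx,\nabla\Psi_\delta(\vx),\eta_x)} + \norm{\nabla_x f_\delta(\vx,\vy)-\nabla_x f_\delta(\vx,\vy_\delta^*(\vx))}\\
&\le \norm{G(\vx,\nabla\Psi_\delta(\vx),\eta_x)} + M_{f_\delta}\norm{\vy-\vy_\delta^*(\vx)}.
\end{align*}
This step is valid for every $\eta_x>0$. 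It therefore remains to prove the error bound
\[
\norm{\vy-\vy_\delta^*(\vx)}\le \frac{1}{\mu}\norm{G_y(\vx,\vy,\nabla\phi(\vy),\tilde\eta_y)}
\qquad\text{whenever } \tilde\eta_y\le 1/M_{f_\delta}.
\]

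\textbf{Step 3 (error bound; main obstacle).} Write $\vy^*=\vy_\delta^*(\vx)$, $\vy_+=\Pi_\fY(\vy+\tilde\eta_y\nabla\phi(\vy))$, $G=(\vy_+-\vy)/\tilde\eta_y$, $r=\norm{\vy-\vy^*}$ and $\Delta=\nabla\phi(\vy)-\nabla\phi(\vy^*)$. Two optimality conditions are available.
\begin{itemize}
\item The projection defining $\vy_+$ gives $\langle \nabla\phi(\vy)-G,\ \vz-\vy_+\rangle\le 0$ for all $\vz\in\fY$; take $\vz=\vy^*$.
\item The optimality of $\vy^*$ gives $\langle\nabla\phi(\vy^*),\ \vy_+-\vy^*\rangle\le0$.
\end{itemize}
Adding them and substituting $\vy_+=\vy+\tilde\eta_y G$ yields
\[
\langle \Delta,\ \vy^*-\vy\rangle-\tilde\eta_y\langle\Delta,G\rangle \le \langle G,\ \vy^*-\vy\rangle-\tilde\eta_y\norm{G}^2 .
\]
Strong monotonicity of $-\nabla\phi$ gives $\langle\Delta,\vy^*-\vy\rangle\ge\mu r^2$. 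Bounding the remaining terms crudely would only give $r\le 2\norm{G}/\mu$. To reach the sharper constant $1/\mu$ I would:
\begin{itemize}
\item split the strong-monotonicity term using the co-coercivity-type inequality for strongly concave smooth functions, $\langle\Delta,\vy^*-\vy\rangle\ge \frac{\mu M}{\mu+M}r^2+\frac{1}{\mu+M}\norm{\Delta}^2$ with $M=M_{f_\delta}$;
\item absorb $\tilde\eta_y\langle\Delta,G\rangle-\tilde\eta_y\norm{G}^2\le \tilde\eta_y\norm{\Delta}^2/4$ into the $\norm{\Delta}^2$ term, which is where $\tilde\eta_y\le 1/M$ enters;
\item conclude $\mu r^2\lesssim\norm{G}\,r$, i.e.\ $r\le \norm{G}/\mu$.
\end{itemize}

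If the bookkeeping in Step~3 refuses to give exactly $1/\mu$, I would fall back on the alternative route via nonexpansiveness of the map $\vy\mapsto\Pi_\fY(\vy+\tilde\eta_y\nabla\phi(\vy))$. For $\tilde\eta_y\le1/M$ this map is a contraction with factor $1-\tilde\eta_y\mu$ and has fixed point $\vy^*$, so
\[
r\le \norm{\vy_+-\vy^*}+\tilde\eta_y\norm{G}\le (1-\tilde\eta_y\mu)\,r+\tilde\eta_y\norm{G},
\]
which gives $r\le\norm{G}/\mu$ directly. This contraction argument is likely the cleanest path, and I would use it as the primary route. Combining $r\le\norm{G}/\mu$ with Step~2 gives the claimed inequality.
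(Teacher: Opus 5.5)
Your proof is correct, and its outer skeleton (triangle inequality, Danskin's formula $\nabla\Psi_\delta(\vx)=\nabla_x f_\delta(\vx,\vy_\delta^*(\vx))$, nonexpansiveness of $\Pi_\fX$) is the same as the paper's. The two arguments differ only on the term $\|\nabla_x f_\delta(\vx,\vy_\delta^*(\vx))-\nabla_x f_\delta(\vx,\vy)\|$.

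The paper bounds this term in one step by citing Lemma~\ref{lemma: A1}, whose constant is $2M_{f_\delta}/\mu$. As written, the paper's chain therefore ends with $\frac{2M_{f_\delta}}{\mu}\|G_y\|$, a factor $2$ weaker than the statement. This is harmless for the complexity orders, but the proof of Theorem~\ref{zero-thm} does plug in $M_{f_\delta}/\mu$.

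You instead bound the term by $M_{f_\delta}\|\vy-\vy_\delta^*(\vx)\|$ using joint smoothness. You then prove the error bound $\|\vy-\vy_\delta^*(\vx)\|\le\|G_y\|/\mu$ from two facts: the map $\vy\mapsto\Pi_\fY(\vy+\tilde\eta_y\nabla_y f_\delta(\vx,\vy))$ is a $(1-\tilde\eta_y\mu)$-contraction, and $\vy_\delta^*(\vx)$ is its fixed point. This is self-contained and gives exactly the stated constant $M_{f_\delta}/\mu$. Your route is thus the one that actually justifies the lemma as written.

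Two things to tighten.

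\textbf{Drop the co-coercivity sketch.} Present the contraction argument as the proof. Carried out, the co-coercivity bookkeeping only yields $\|\vy-\vy_\delta^*(\vx)\|\le\|G_y\|/(\mu(1-\tilde\eta_y\mu/4))$, which falls short of $1/\mu$.

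\textbf{Prove the contraction factor.} Set $d=\vy_1-\vy_2$ and $\Delta=\nabla_y f_\delta(\vx,\vy_1)-\nabla_y f_\delta(\vx,\vy_2)$, and expand $\|d+\tilde\eta_y\Delta\|^2$. Apply the co-coercivity inequality you listed together with $\|\Delta\|\ge\mu\|d\|$. This gives the bound $(1-\tilde\eta_y\mu)^2\|d\|^2$ whenever $\tilde\eta_y\le 2/(\mu+M_{f_\delta})$. The hypothesis $\tilde\eta_y\le 1/M_{f_\delta}$ guarantees this because $\mu\le M_{f_\delta}$.
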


Lemma \ref{lemma: gradient mapping between Psi_delta and f_delta} indicates that the procedure of finding an $(\eta_x,\tilde\eta_y,\delta,\epsilon)$-GSSP of $f(\vx,\vy)$ 
by PGFDA (Algorithm \ref{alg:f}) can be regarded as the following two stage.
\begin{itemize}[topsep=0.025cm,parsep=0.002cm]
    \item For Lines \ref{line:y0}--\ref{line:end-for}, it minimizes the smooth function $\Psi_\delta(\cdot)$ over the feasible set $\fX\subseteq\BR^{d_x}$ to find $\vx_{\rm out}\in\fX$ such that $\norm{G(\vx_{\rm out},\nabla\Psi_\delta(\vx_{\rm out}),\eta_x)} \leq {\epsilon}/{2}$.
    \item For Line \ref{line:haty}, it maximizes the smooth function $f_\delta(\vx_{\rm out},\cdot)$ over the feasible set $\fY\subseteq\BR^{d_y}$ to find $\vy_{\rm out}\in\fY$ such that 
    $\| G_y(\vx_{\rm out}, \vy_{\rm out},\nabla_y f_\delta(\vx_{\rm out},\vy_{\rm out}),\tilde\eta_y) \| \leq \mu\epsilon/(2M_{f_\delta})$ for given~$\vx_{\rm out}\in\fX$.
\end{itemize}
We then construct the Lyapunov function 
\begin{align}\label{Def of H}
    {\mathcal L}_t \triangleq \Psi_{\delta}(\vx_t)  + \alpha \left( \Psi_{\delta}(\vx_{t}) - f_{\delta}(\vx_{t}, \vy_{t}) \right)+\frac{\alpha\eta_y}{p}\norm{\mathbf{g}_t-\nabla f_\delta(\vx_t,\vy_t)}^2,
\end{align}
to characterize the convergence of PGFDA, where $\alpha\in(0,1/4]$ and 
\begin{align*}
    \vg_t=
\begin{bmatrix}
    \vu_t\\
    \vv_t
\end{bmatrix}.
\end{align*}

For the subroutine of Projected-Gradient-Free-Descent (Algorithm
\ref{alg:minimization}), we establish the following lemma for minimizing nonsmooth strongly convex functions.

\begin{lemma}\label{thm:zeroth-order minimization}
We apply Algorithm \ref{alg:minimization} to solve the nonsmooth minimization problem
\begin{align*}
    \min_{\vy\in\fY}h(\vy) \triangleq \BE[H(\vy;\vxi)],
\end{align*}
where $h:\BR^{d_y}\to\BR$ is $\mu$-strongly convex on the augmented domain $\fY_\delta$, the feasible set $\fY\in\BR^{d_y}$ is convex and compact, and the stochastic component $H(\vy;\vxi)$ indexed by random variable~$\vxi$ is mean-squared Lipschitz continuous in $\vy$ on $\fY_\delta$, i.e., for all $\vy_1, \vy_2\in\fY_\delta$, it holds 
\begin{align*}
\mathbb{E}\left[|H(\vy_1;\vxi) - H(\vy_2;\vxi)|^2\right] \leq L^2 \norm{\vy_1 - \vy_2}^2.    
\end{align*}
By taking $K=\lceil(64\sqrt{2\pi}d_yL^2)/(\mu\tilde\epsilon)\rceil$ and
$\eta_k=2/(\mu(k+1))$ for some $\tilde\epsilon>0$,
the output $\vy_{\mathrm{out}}$ of the algorithm  satisfies
\begin{align*}
\mathbb{E}[h_\nu(\vy_{\mathrm{out}})]-\inf _{\vy\in \fY} h_\nu(\vy)\le\tilde\epsilon,
\end{align*}
where $h_\nu(\vy)= \BE_{\vw\sim {\mathrm{Unif}}(\BB^{d}(\vzero,1))} [h(\vy+\nu\vw)]$ is the smoothing surrogate of $h$.
If we further take~$\nu={\tilde\epsilon}/(4L)$, it holds
\begin{align*}
\mathbb{E}[h(\vy_{\mathrm{out}})]-\inf _{\vy\in \fY} h(\vy)\le\tilde\epsilon.
\end{align*}   
\end{lemma}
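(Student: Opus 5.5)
The plan is to view Algorithm \ref{alg:minimization} as projected stochastic gradient descent on the smooth surrogate $h_\nu$, using the two-point estimator $\vv_k$. Then I will run the standard analysis of SGD for strongly convex functions with step sizes $\eta_k=2/(\mu(k+1))$ and weighted averaging, which yields an $\fO(G^2/(\mu K))$ rate.

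First, I will record the properties of $h_\nu$ and $\vv_k$. Applying Proposition \ref{prop:g-unbiased-variance} in dimension $d_y$ (with $H$ in place of $F$) gives $\BE[\vv_k\mid\vy_k]=\nabla h_\nu(\vy_k)$. It also gives the second moment bound
\[
\BE\norm{\vv_k}^2\le 2\norm{\nabla h_\nu(\vy_k)}^2+32\sqrt{2\pi}d_yL^2\le G^2:=(2+32\sqrt{2\pi}d_y)L^2,
\]
using that $h_\nu$ is $L$-Lipschitz. Alternatively, one can bound $\BE\norm{\vv_k}^2\le 16\sqrt{2\pi}d_yL^2$ directly, as Lin et al.\ do; I will aim for a constant with $G^2\le 32\sqrt{2\pi}d_yL^2$, which is compatible with the stated $K$. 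In addition, $h_\nu$ is $\mu$-strongly convex on $\fY$, since averaging over shifts preserves strong convexity. This is exactly the argument of Proposition \ref{prop: concave of f_delta}, which needs $\nu\le\delta$ so that the shifts stay in $\fY_\delta$.

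Second, let $\vy^\star=\argmin_{\fY}h_\nu$. Nonexpansiveness of $\Pi_\fY$ gives
\[
\norm{\vy_{k+1}-\vy^\star}^2\le\norm{\vy_k-\vy^\star}^2-2\eta_k\langle\vv_k,\vy_k-\vy^\star\rangle+\eta_k^2\norm{\vv_k}^2 .
\]
I will take the conditional expectation and apply strong convexity,
\[
\langle\nabla h_\nu(\vy_k),\vy_k-\vy^\star\rangle\ge h_\nu(\vy_k)-h_\nu(\vy^\star)+\tfrac{\mu}{2}\norm{\vy_k-\vy^\star}^2 .
\]
Rearranging gives
\[
\BE[h_\nu(\vy_k)-h_\nu^\star]\le\frac{\mu(k-1)}{4}a_k-\frac{\mu(k+1)}{4}a_{k+1}+\frac{G^2}{\mu(k+1)},
\]
where $a_k=\BE\norm{\vy_k-\vy^\star}^2$. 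Multiplying by $k$ makes the distance terms telescope, since $k(k-1)a_k-(k+1)ka_{k+1}$ sums to a non-positive quantity, and the noise terms sum to at most $KG^2/\mu$. Jensen's inequality for the weights $2k/(K(K-1))$ then gives
\[
\BE[h_\nu(\vy_{\rm out})]-h_\nu^\star\le\frac{2G^2}{\mu(K-1)} .
\]
With the stated $K$ this is at most $\tilde\epsilon$, up to bookkeeping of the constant. This bookkeeping is the step I expect to be most delicate: matching $64\sqrt{2\pi}$ requires the sharp second-moment bound rather than the crude one.

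Third, I transfer the bound from $h_\nu$ to $h$. Lipschitz continuity gives $|h_\nu-h|\le\nu L$ on $\fY$, as in Proposition \ref{prop:smoothing}(a), so
\[
\BE h(\vy_{\rm out})-\inf h\le \BE h_\nu(\vy_{\rm out})-\inf h_\nu+2\nu L .
\]
To land exactly at $\tilde\epsilon$, I will run the first part with target $\tilde\epsilon/2$. This costs only a constant factor, which is absorbed by choosing the constants in $K$ and $\nu=\tilde\epsilon/(4L)$ consistently, since then $2\nu L=\tilde\epsilon/2$.
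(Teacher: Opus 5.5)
Your proposal is correct and follows the paper's proof essentially step for step: projected SGD on $h_\nu$ with the unbiased two-point estimator, $\eta_k=2/(\mu(k+1))$, multiplication by $k$ to telescope, and Jensen on the weighted average. The one cosmetic difference is that you pass from $h_\nu$ to $h$ by the sandwich $|h-h_\nu|\le\nu L$ at the end, whereas the paper reruns the telescoping with comparator $\argmin_{\fY}h$ and applies Jensen to $h$; both routes lead to the same final bound $\frac{32\sqrt{2\pi}d_yL^2}{\mu(K-1)}+2\nu L$. Since $K$ is fixed by the statement, nothing can be absorbed into it, so you must use the sharp bound $\BE\norm{\vv_k}^2\le 16\sqrt{2\pi}d_yL^2$ (as the paper does, via Lemma D.1 of \citet{lin2022gradient}), which makes the $h_\nu$-error about $\tilde\epsilon/2$ for the stated $K$ and gives both claims at once, whereas $G^2=32\sqrt{2\pi}d_yL^2$ would leave you near $3\tilde\epsilon/2$ in the second claim (bounding $\sum_{k}k/(k+1)\le K-1$ also replaces $K-1$ by $K$ in the denominator, removing the off-by-one that both arguments gloss over).
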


We also provide the recursion for the Lyapunov function $\fL_t$ as follows.

\begin{lemma}\label{update}
Under Assumptions \ref{asm:Lipschitz}, \ref{asm:convex-closed}, \ref{asm:strongly-concave}, we run Algorithm \ref{alg:f} by taking parameters such that
\[\eta_x=\frac{\alpha\mu^2\delta^3}{768(1+\alpha)c^3d^{3/2}L^3},\quad\eta_y=\frac{\delta}{4c\sqrt{d}L}, 
\quad\text{and}\quad
pb\ge\frac{d}{c^2}.
\]
then it holds
\begin{align} \label{ieq:bound-P}
\mathbb{E}[\mathcal L_{t+1}]\leq \mathbb{E}\left[ \mathcal L_t - \frac{ \eta_x}{8} \Vert G(\vx_t,\nabla\Psi_\delta(\vx_t),\eta_x)  \Vert^2 
+\frac{  16\sqrt{2\pi}\alpha \eta_y dL^2}{\tilde{b}}\right].
\end{align}
\end{lemma}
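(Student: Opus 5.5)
We prove the recursion by bounding separately the one-step change of each of the three terms in $\mathcal L_t$, and then adding them with weights $1$, $\alpha$, and $\alpha\eta_y/p$. Throughout, write $\vg_t=[\vu_t;\vv_t]$ and $\ve_t=\vg_t-\nabla f_\delta(\vx_t,\vy_t)$. We use $M\triangleq M_{f_\delta}=c\sqrt d L/\delta$, so that $\eta_y=1/(4M)$. We also use $\kappa=M/\mu$ and $M_{\Psi_\delta}=(1+\kappa)M\le 2\kappa M$ from Proposition~\ref{prop:smooth-Psi-delta}.

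\textbf{Step 1 (primal descent).} By $M_{\Psi_\delta}$-smoothness and the projection step $\vx_{t+1}=\Pi_\fX(\vx_t-\eta_x\vu_t)$, the standard projected-gradient argument (three-point property of the projection) gives
\[
\Psi_\delta(\vx_{t+1})\le\Psi_\delta(\vx_t)-\tfrac{\eta_x}{4}\|G(\vx_t,\nabla\Psi_\delta(\vx_t),\eta_x)\|^2+C\eta_x\big(\|\nabla_x f_\delta(\vx_t,\vy_t)-\nabla\Psi_\delta(\vx_t)\|^2+\|\ve_t\|^2\big),
\]
valid because $\eta_x\le 1/(4M_{\Psi_\delta})$. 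We control $\|\nabla_x f_\delta(\vx_t,\vy_t)-\nabla\Psi_\delta(\vx_t)\|\le M\|\vy_t-\vy^*_\delta(\vx_t)\|$. Strong concavity together with the quadratic-growth/PL-type inequality for the projected ascent step then gives
\[
\|\vy_t-\vy_\delta^*(\vx_t)\|^2\le \tfrac{2}{\mu}\big(\Psi_\delta(\vx_t)-f_\delta(\vx_t,\vy_t)\big).
\]
Hence this error is at most $(2M^2/\mu)\,(\Psi_\delta-f_\delta)$.

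\textbf{Step 2 (dual gap contraction).} We bound $\Psi_\delta(\vx_{t+1})-f_\delta(\vx_{t+1},\vy_{t+1})$ in two stages.

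First, the $\vy$ update with $\eta_y=1/(4M)$ applied to the $\mu$-strongly concave, $M$-smooth function $f_\delta(\vx_t,\cdot)$ gives the contraction
\[
\big(\Psi_\delta(\vx_t)-f_\delta(\vx_t,\vy_{t+1})\big)\le(1-\mu\eta_y/2)\big(\Psi_\delta(\vx_t)-f_\delta(\vx_t,\vy_t)\big)+C\eta_y\|\ve_t\|^2-\tfrac{1}{8\eta_y}\|\vy_{t+1}-\vy_t\|^2.
\]
Second, we move $\vx_t\to\vx_{t+1}$. The change is bounded using smoothness of $\Psi_\delta$ and $f_\delta$ by terms of the form $\eta_x\langle\cdot\rangle+M_{\Psi_\delta}\|\vx_{t+1}-\vx_t\|^2$. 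These are absorbed by Young's inequality into $\eta_x\|G\|^2$, $\|\ve_t\|^2$ and the gap.

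The tiny choice $\eta_x\propto\alpha\mu^2\delta^3/(d^{3/2}L^3)=\Theta(\alpha/(\kappa^2 M))$ ensures two things: the loss $C\eta_x M^2/\mu\cdot$gap from Step 1 is dominated by the contraction gain $\alpha\mu\eta_y/2\cdot$gap, and the $\|\vx_{t+1}-\vx_t\|^2$ terms are harmless.

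\textbf{Step 3 (variance recursion).} Conditioning on the Bernoulli coin, the PAGE-type estimator satisfies
\[
\BE\|\ve_{t+1}\|^2\le p\,\frac{16\sqrt{2\pi}dL^2}{\tilde b}+(1-p)\Big(\|\ve_t\|^2+\frac{C' d M^2}{b}\,\|(\vx_{t+1},\vy_{t+1})-(\vx_t,\vy_t)\|^2\Big).
\]
The last term needs the mean-squared Lipschitz property of the zeroth-order estimator difference. For two-point estimators of $L(\vxi)$-Lipschitz $F$ at nearby points, this gives a second moment $\lesssim d^2L^2\|\Delta\|^2/\delta^2$, i.e. $\lesssim d M^2\|\Delta\|^2/c^2$. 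Multiplying by $\alpha\eta_y/p$, the term $-\alpha\eta_y\|\ve_t\|^2$ pays for the $\|\ve_t\|^2$ terms of Steps 1--2. The movement term is $\frac{\alpha\eta_y dM^2}{pbc^2}\|\Delta\|^2\le\alpha\eta_y M^2\|\Delta\|^2$, using $pb\ge d/c^2$. This is absorbed by the $-\frac{\alpha}{8\eta_y}\|\vy_{t+1}-\vy_t\|^2$ from Step 2, since $\eta_yM^2\le 1/(16\eta_y)$, together with the $\eta_x$ terms for the $\vx$ movement. The only leftover is the refresh term $\alpha\eta_y\cdot16\sqrt{2\pi}dL^2/\tilde b$.

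\textbf{Main obstacle.} The delicate part is the bookkeeping of Step 2 combined with Step 3. The $\vx$-movement perturbs the dual gap, and this must be charged simultaneously against $\eta_x\|G\|^2/8$ and the contraction. That is why $\eta_x$ scales like $\alpha/((1+\alpha)\kappa^2M)$, with the constant 768. We would fix the Young-inequality splits first in Step 2 and then verify that each residual coefficient is nonpositive under the stated parameters.
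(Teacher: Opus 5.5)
Your argument is correct, but it pays for the primal error $\|\nabla_x f_\delta(\vx_t,\vy_t)-\nabla\Psi_\delta(\vx_t)\|$ by a different mechanism than the paper.

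\textbf{How the paper does it.} It bounds the error through the gradient-mapping error bound of Lemma~\ref{lemma: A1}, i.e.\ by $\frac{2M_{f_\delta}}{\mu}\big(\|\tilde\vv_t\|+\|\vv_t-\nabla_y f_\delta(\vx_t,\vy_t)\|\big)$, where $\tilde\vv_t=(\vy_{t+1}-\vy_t)/\eta_y$. It pays for this with the sufficient-increase term $-\alpha(\frac{\eta_y}{4}-\frac{M_{f_\delta}\eta_y^2}{2})\|\tilde\vv_t\|^2$ from a joint descent inequality for $-f_\delta$ (Lemma~\ref{lemma:C.2}). The descent inequalities for $\Psi_\delta$ and $-f_\delta$ are weighted by $1+\alpha$ and $\alpha$, so the inner products $\pm\eta_x\langle\vu_t,\tilde\vu_t\rangle$ combine into $-\eta_x\langle\vu_t,\tilde\vu_t\rangle\le-\eta_x\|\tilde\vu_t\|^2$. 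The gap term of $\mathcal L_t$ is only telescoped, never contracted.

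\textbf{How you do it.} You turn the error into the gap $\Gamma_t=\Psi_\delta(\vx_t)-f_\delta(\vx_t,\vy_t)$ by quadratic growth, and pay with a linear contraction of $\Gamma_t$ under the inexact projected ascent step. This is more elementary: there is no imported error bound and no weighting trick. It also has ample slack, since the gain is of order $\alpha\mu\eta_y\Gamma_t=\frac{\alpha}{4\kappa}\Gamma_t$ while the loss is of order $\eta_x M_{f_\delta}\kappa\,\Gamma_t=\frac{\alpha}{768(1+\alpha)\kappa}\Gamma_t$.

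\textbf{The one tight constant.} This is the coefficient of $\|\vv_t-\nabla_y f_\delta(\vx_t,\vy_t)\|^2$ in your Step~2. $\mathcal L_t$ leaves exactly $\alpha\eta_y\|\vg_t-\nabla f_\delta(\vx_t,\vy_t)\|^2$ of slack, so this coefficient must stay below $\eta_y$. Splitting the error cross terms separately by Young while keeping $-\frac{1}{8\eta_y}\|\vy_{t+1}-\vy_t\|^2$ can overshoot. Instead, use the three-point inequality of the projection with comparison point $\hat\vy=\vy_t+\mu\eta_y(\vy_\delta^*(\vx_t)-\vy_t)$, and absorb $\langle\nabla_y f_\delta(\vx_t,\vy_t)-\vv_t,\hat\vy-\vy_{t+1}\rangle$ into $-\frac{1}{2\eta_y}\|\hat\vy-\vy_{t+1}\|^2$. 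This gives
\[
\Psi_\delta(\vx_t)-f_\delta(\vx_t,\vy_{t+1})\le(1-\mu\eta_y)\Gamma_t+\frac{\eta_y}{2}\|\vv_t-\nabla_y f_\delta(\vx_t,\vy_t)\|^2-\frac{3}{8\eta_y}\|\vy_{t+1}-\vy_t\|^2 .
\]
With this, every coefficient closes under the stated $\eta_x$, $\eta_y$ and $pb\ge d/c^2$, including the exact $-\frac{\eta_x}{8}\|G\|^2$ and the refresh term.

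\textbf{A further advantage of your route.} You handle every error cross term by Young's inequality, so you never need $\mathbb{E}[\vu_t\mid\vx_t,\vy_t]=\nabla_x f_\delta(\vx_t,\vy_t)$. The paper's step $\mathbb{E}[A_2]=0$ in the proof of Lemma~\ref{lemma:C.1} implicitly uses this. However, the recursive estimator has conditional bias $(1-p)\big(\vu_{t-1}-\nabla_x f_\delta(\vx_{t-1},\vy_{t-1})\big)$ given the past, and $\vx_t$ itself depends on $\vu_{t-1}$.
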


Combining the above lemmas, we obtain the following theorem that provides a theoretical guarantee for achieving a $(\eta_x,\tilde\eta_y,\delta,\epsilon)$-GSSP in the nonconvex-strongly-concave case.

\begin{theorem}\label{zero-thm}
Under Assumptions \ref{asm:Lipschitz}, \ref{asm:convex-closed} and \ref{asm:strongly-concave}, we run PGFDA (Algorithm \ref{alg:f}) with

\[
\eta_x=\Theta\left(\frac{\mu^2\delta^3}{d^{3/2}L^3}\right),
\quad\eta_y=\Theta\left(\frac{\delta}{d^{1/2}L}\right),
\quad\tilde\eta_y=\Theta\left(\frac{\delta}{d^{1/2}L}\right),
\]
\[
 b=\Theta\left(\frac{d^{{3}/{2}}L^2}{\mu\delta\epsilon}\right),\quad  \tilde{b}=\Theta\left(\frac{d^2L^4}{\mu^2\delta^2\epsilon^2}\right),
 \quad p=\Theta\left(\frac{\delta\mu\epsilon}{d^{1/2}L^2}\right),
\]
\[ K_{\mathrm{in}}=\Theta\left(\frac{ d_y L^2}{\mu }\right),\quad
K_{\mathrm{out}}
=
\Theta\left(\frac{d^{3/2}d_y L^5}{\mu^3\delta^3  \epsilon^2}\right),\quad
T=\Theta\left(\frac{d^{3/2}L^3 (\Delta+\delta L+1)}{ \mu^2\delta^3 \epsilon^2} \right),
\]
then it outputs a $(\eta_x,\tilde\eta_y,\delta,\epsilon)$-GSSP 
and the total stochastic zeroth-order oracle complexity is
 \[
 \fO\left(\frac{d^{3}L^5(
 \Delta+\delta L)}{\mu^3 \delta^4 \epsilon^3}\right)
 \]
 in expectation.
\end{theorem}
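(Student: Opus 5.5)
The proof has two stages, matching the decomposition after Lemma \ref{lemma: gradient mapping between Psi_delta and f_delta}. First, the main loop makes $\vx_{\rm out}=\vx_j$ approximately stationary for the smooth primal function $\Psi_\delta$. Second, the final subroutine makes $\vy_{\rm out}$ nearly optimal for $f_\delta(\vx_{\rm out},\cdot)$. Throughout, $\nabla f_\delta(\vx,\vy)\in\partial_\delta f(\vx,\vy)$ by Proposition \ref{prop:smoothing}(d). Hence it suffices to show, in expectation,
\begin{align*}
\norm{G_x(\vx_{\rm out},\vy_{\rm out},\nabla_x f_\delta,\eta_x)}\le\epsilon
\quad\text{and}\quad
\norm{G_y(\vx_{\rm out},\vy_{\rm out},\nabla_y f_\delta,\tilde\eta_y)}\le\epsilon,
\end{align*}
with $\tilde\eta_y\le 1/M_{f_\delta}$, i.e.\ $\tilde\eta_y=\delta/(c\sqrt d L)$.

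\textbf{Stage one: stationarity in $\vx$.} I would telescope Lemma \ref{update} over $t=0,\dots,T-1$, which gives
\begin{align*}
\frac1T\sum_{t=0}^{T-1}\BE\norm{G(\vx_t,\nabla\Psi_\delta(\vx_t),\eta_x)}^2\le \frac{8(\BE\fL_0-\inf\fL)}{\eta_x T}+\frac{128\sqrt{2\pi}\alpha\eta_y dL^2}{\eta_x\tilde b}.
\end{align*}
To bound $\fL_0-\inf\fL$, first note the gap term is nonnegative. Next, $\Psi_\delta(\vx_0)-\Psi_\delta^*\le \Delta+2\delta L$ by Lemma \ref{distance between Phi Phi_delta}. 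The initial variance term is at most $\alpha\eta_y\cdot16\sqrt{2\pi}dL^2/(p\tilde b)$, which is $O(1)$ under the chosen $p,\tilde b$. The initial gap $\Psi_\delta(\vx_0)-f_\delta(\vx_0,\vy_0)$ is controlled by Lemma \ref{thm:zeroth-order minimization} applied to $-F(\vx_0,\cdot;\cdot)$ with $K_{\rm in}=\Theta(d_yL^2/\mu)$, taking $\tilde\epsilon=\Theta(1)$ and adding $2\delta L$ for the smoothing mismatch. This is where the "$+1$" in $T$ comes from.

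Substituting $\eta_x,\eta_y,\tilde b,T$, each term on the right is at most $\epsilon^2/16$. By Jensen and the uniform choice of $j$, $\BE\norm{G(\vx_j,\nabla\Psi_\delta(\vx_j),\eta_x)}\le\epsilon/2$. One check is needed: $\eta_x/\eta_y$ scales as $\mu^2\delta^2/(dL^2)$, so with $\tilde b\sim d^2L^4/(\mu^2\delta^2\epsilon^2)$ the second term is $O(\epsilon^2)$.

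\textbf{Stage two: stationarity in $\vy$.} By Lemma \ref{lemma: gradient mapping between Psi_delta and f_delta} it remains to get $\norm{G_y}\le\mu\epsilon/(2M_{f_\delta})$ at $(\vx_j,\vy_{\rm out})$. For an $M_{f_\delta}$-smooth concave function with stepsize $\tilde\eta_y\le1/M_{f_\delta}$, the standard descent-lemma bound for the gradient mapping gives
\begin{align*}
\norm{G_y}^2\le 2M_{f_\delta}\bigl(\max_{\fY} f_\delta(\vx_j,\cdot)-f_\delta(\vx_j,\vy_{\rm out})\bigr).
\end{align*}
So I need a function-value gap $\tilde\epsilon=\mu^2\epsilon^2/(8M_{f_\delta}^3)=\Theta(\mu^2\delta^3\epsilon^2/(d^{3/2}L^3))$. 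I would run Algorithm \ref{alg:minimization} on $f_\delta(\vx_j,\cdot)$; the first conclusion of Lemma \ref{thm:zeroth-order minimization} with smoothing radius $\delta$ is exactly in terms of $f_\delta$. This requires $K_{\rm out}=\Theta(d_yL^2/(\mu\tilde\epsilon))$, which matches the stated $K_{\rm out}$. Expectations are handled by conditioning on $j$ and applying Jensen to $\sqrt{\cdot}$.

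\textbf{Complexity count and main obstacle.} The expected oracle cost per iteration is $p\tilde b+(1-p)b=O(b+p\tilde b)$, and both terms are $\Theta(d^{3/2}L^2/(\mu\delta\epsilon))$. The condition $pb\ge d/c^2$ holds since $pb=\Theta(d)$. Multiplying by $T$ gives $O(d^3L^5(\Delta+\delta L)/(\mu^3\delta^4\epsilon^3))$. The costs of $K_{\rm in}$, $K_{\rm out}$ and the initial batch $\tilde b$ are dominated by this. I expect the main obstacle to be bookkeeping the initial Lyapunov value: the initial gap and variance term must be $O(\Delta+\delta L+1)$ uniformly, and this has to be reconciled with the variance floor in Lemma \ref{update}.
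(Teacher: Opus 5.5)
Apart from one step, your outline is the paper's proof. It has the same ingredients: telescoping Lemma~\ref{update}; splitting $\fL_0-\fL_T$ into the $\Psi_\delta$-gap, the warm-start gap and the initial variance; Jensen over $j$; then Lemma~\ref{Function value optimality controls the gradient mapping} with $\tilde\eta_y=1/M_{f_\delta}$ and target gap $\mu^2\epsilon^2/(8M_{f_\delta}^3)$, combined via Lemma~\ref{lemma: gradient mapping between Psi_delta and f_delta}; and the same cost count. The step that fails is not the Lyapunov bookkeeping you anticipate.

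It is your claim in Stage two that the first conclusion of Lemma~\ref{thm:zeroth-order minimization} with radius $\delta$ ``is exactly in terms of $f_\delta$''. Algorithm~\ref{alg:minimization} draws $\vw_k\sim\mathrm{Unif}(\BS^{d_y-1})$ and never perturbs $\vx$. So with $H=-F(\vx_j,\cdot\,;\cdot)$ its surrogate is $-\BE_{\vv}[f(\vx_j,\vy+\delta\vv)]$ with $\vv\sim\mathrm{Unif}(\BB^{d_y}(\vzero,1))$. By contrast, $f_\delta(\vx_j,\vy)$ averages over joint perturbations $(\delta\vu,\delta\vv)$ of $(\vx_j,\vy)$ with $(\vu,\vv)\sim\mathrm{Unif}(\BB^{d}(\vzero,1))$. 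The two are only guaranteed to be within $2\delta L$ of each other, while you need the $f_\delta$-gap below $\Theta(\mu^2\delta^3\epsilon^2/(d^{3/2}L^3))\ll\delta L$. The additive mismatch correction you rightly used for the warm-start gap (which the paper's bound \eqref{C2} silently needs too) is therefore unaffordable here.

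Concretely, take $d_x=d_y=1$, $f(x,y)=|x|y-\frac{\mu}{2}y^2$, $x_j=0$ and $(u,v)\sim\mathrm{Unif}(\BB^2(\vzero,1))$. The $\vy$-only surrogate is maximized at $y=0$, but $f_\delta(0,y)=\delta\BE|u|\,y-\frac{\mu}{2}y^2+\mathrm{const}$. The subroutine thus returns $\vy_{\mathrm{out}}\approx 0$ with $\nabla_y f_\delta(0,\vy_{\mathrm{out}})\approx\delta\BE|u|$. For small $\epsilon$ this is far above the required $\mu\epsilon/(2M_{f_\delta})=\Theta(\mu\epsilon\delta/(\sqrt{d}L))$. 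Hence neither the $G_y$ bound at $\nabla_y f_\delta$ nor the $G_x$ bound obtained from it through Lemma~\ref{lemma: gradient mapping between Psi_delta and f_delta} is established. The paper's proof makes the same identification at \eqref{eq:f_delta_out}.

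To repair it, change the oracle in Line~\ref{line:haty} of Algorithm~\ref{alg:f}. Run the subroutine with $H(\vy;(\vu,\vv,\vxi))=-F(\vx_j+\delta\vu,\vy+\delta\vv;\vxi)$, drawing fresh $(\vu,\vv)\sim\mathrm{Unif}(\BB^{d}(\vzero,1))$ at each step. Then $\BE H=-f_\delta(\vx_j,\cdot)$, which is $\mu$-strongly convex (Proposition~\ref{prop: concave of f_delta}) and mean-squared $L$-Lipschitz. The \emph{second} conclusion of Lemma~\ref{thm:zeroth-order minimization} with $\nu=\tilde\epsilon/(4L)$ then gives $\BE[\Psi_\delta(\vx_j)-f_\delta(\vx_j,\vy_{\mathrm{out}})]\le\tilde\epsilon$, with $K_{\mathrm{out}}$ of the stated order. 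The price is that Assumptions~\ref{asm:Lipschitz} and~\ref{asm:strongly-concave} must hold on $\fY_{\delta+\nu}$ rather than $\fY_\delta$. The rest of your argument then goes through unchanged.

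A minor point: the initial variance term is $O(\delta L)$, via $p\tilde b\ge pb=d/c^2$, not $O(1)$. It is absorbed in $\Delta+\delta L+1$ either way.
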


We then consider finding approximate GSSPs in the general nonconvex-concave setting.
In particular, we introduce the regularized minimax problem
\begin{align}\label{prob:regularized}
    \min_{\vx\in\fX}\max_{\vy\in\fY}\tilde{f}(\vx,\vy) \triangleq f(\vx,\vy)-\frac{\epsilon}{2D_y}\|\vy-\vy_0\|^2,
\end{align}
where $f(\vx,\vy)\triangleq\mathbb{E}[F(\vx,\vy;\vxi)]$, $D_y>0$ is the diameter of the feasible set $\fY$, $\epsilon>0$ is the target accuracy, and $\vy_0\in\fY$ is the initial point of the algorithm.
We also define the smooth surrogate for the regularized function $\tilde{f}(\vx,\vy)$ as follows 
\begin{align}\label{eq:tilde-f-delta} 
    \tilde f_\delta(\vx,\vy) \triangleq \BE_{(\vu,\vv)\sim {\mathrm{Unif}}(\BB ^{d}(\vzero,1))} [\tilde f(\vx+\delta\vu, \vy+\delta\vv)].
\end{align}
The following proposition provides a closed form expression for the function $\tilde f_\delta(\vx,\vy)$.

\begin{proposition}\label{prop: the smoothed function of the regularized}
    For the regularized function defined in \eqref{prob:regularized}, its smoothing surrogate $\tilde{f}_\delta(\vx,\vy)$ defined in equation \eqref{eq:tilde-f-delta} has the closed form expression
    \begin{align}
    \tilde{f}_\delta(\vx,\vy)=f_\delta(\vx,\vy)-\frac{\epsilon}{2D_y}\|\vy-\vy_0\|^2-\frac{d_y\delta^2\epsilon}{2(d+2)D_y}.
    \end{align}
\end{proposition}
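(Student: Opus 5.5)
Since the smoothing operator is linear and acts on the objective through the expectation over $(\vu,\vv)\sim\mathrm{Unif}(\BB^{d}(\vzero,1))$, the plan is to split $\tilde f$ into its two parts and smooth each separately. By definition \eqref{eq:tilde-f-delta} and linearity of expectation,
\begin{align*}
\tilde f_\delta(\vx,\vy)=\BE[f(\vx+\delta\vu,\vy+\delta\vv)]-\frac{\epsilon}{2D_y}\BE\big[\|\vy+\delta\vv-\vy_0\|^2\big].
\end{align*}
The first term is exactly $f_\delta(\vx,\vy)$ by \eqref{eq:f_delta}. All that remains is to compute the second expectation in closed form.

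We expand $\|\vy-\vy_0+\delta\vv\|^2=\|\vy-\vy_0\|^2+2\delta\langle\vy-\vy_0,\vv\rangle+\delta^2\|\vv\|^2$. The cross term has zero mean, because the uniform distribution on the ball is symmetric under $(\vu,\vv)\mapsto(-\vu,-\vv)$, so $\BE[\vv]=\vzero$. This reduces the problem to computing $\BE\|\vv\|^2$, where $\vv$ is the last $d_y$ coordinates of a uniform point in $\BB^{d}(\vzero,1)$.

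Computing $\BE\|\vv\|^2$ is the one step with actual content. Write $\vw=(\vu,\vv)$. By rotational invariance, all $d$ coordinates have the same second moment, so $\BE\|\vv\|^2=\frac{d_y}{d}\BE\|\vw\|^2$. For the radius, $\|\vw\|$ has density $d r^{d-1}$ on $[0,1]$, which gives $\BE\|\vw\|^2=\int_0^1 d r^{d+1}\,dr=\frac{d}{d+2}$. Hence $\BE\|\vv\|^2=\frac{d_y}{d+2}$.

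Substituting back gives
\begin{align*}
\tilde f_\delta(\vx,\vy)=f_\delta(\vx,\vy)-\frac{\epsilon}{2D_y}\|\vy-\vy_0\|^2-\frac{\epsilon\delta^2 d_y}{2D_y(d+2)},
\end{align*}
which is the claimed formula. There is no real obstacle here; the only care needed is in the ball second-moment computation and the coordinate symmetry argument.
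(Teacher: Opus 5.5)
Your proposal is correct and follows essentially the same route as the paper. Both use linearity to isolate $f_\delta$, expand the quadratic with the cross term vanishing because $\BE[\vv]=\vzero$, and then use rotational invariance to get $\BE\|\vv\|^2=\frac{d_y}{d}\BE\|\vw\|^2=\frac{d_y}{d+2}$. The only cosmetic difference is that you compute $\BE\|\vw\|^2$ from the radial density $d r^{d-1}$, while the paper integrates the tail probability $\mathbb{P}(\|\vw\|^2\ge t)=1-t^{d/2}$; both give $d/(d+2)$.
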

It is clear that the function $\tilde{f}_\delta(\vx,\vy)$ is strongly-concave in~$\vy$ and its gradient is close to that of~$f_\delta$ for small $\epsilon$.
Hence, we can apply PGFDA (Algorithm~\ref{alg:f}) to solve the nonconvex-strongly-concave minimax problem~\eqref{prob:regularized} to achieve an desired approximate GSSP of problem~\eqref{prob:constrained main}.
Following the proof of Theorem \ref{zero-thm}, we can obtain the theoretical guarantee for the general nonconvex-concave case as follows.

\begin{corollary}\label{thm: complexity of f: concave}    
Under Assumptions \ref{asm:Lipschitz}, \ref{asm:convex-closed} and \ref{asm:concave}, we run PGFDA (Algorithm \ref{alg:f}) to solve the regularized minimax problem \eqref{prob:regularized} with the stochastic zeroth-order oracle 
\begin{align*}
\tilde F(\vx,\vy;\vxi) \triangleq F(\vx,\vy;\vxi)-\frac{\epsilon}{2D_y}\|\vy-\vy_0\|^2
\end{align*}
and appropriate parameter settings, then it outputs a $(\eta_x,\tilde\eta_y,\delta,\epsilon)$-GSSP of problem \eqref{prob:constrained main}
and the total stochastic zeroth-order oracle complexity is
\begin{align*}
\fO\left(\frac{(L+\epsilon)^5 d^{3}D_y^3 (\Delta+\delta L)}{ \delta^4 \epsilon^6}\right).     
\end{align*}
in expectation, where 
\begin{align*}    \eta_x=\fO\left(\frac{\delta^3\epsilon^2}{d^{3/2}(L+\epsilon)^3D_y^2}\right)
\qquad\text{and}\qquad
\quad\tilde\eta_y=\fO\left(\frac{\delta}{d^{1/2}(L+\epsilon)}\right).
\end{align*}
\end{corollary}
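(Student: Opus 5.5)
The plan is to reduce Corollary~\ref{thm: complexity of f: concave} to Theorem~\ref{zero-thm}, applied to the regularized problem~\eqref{prob:regularized}.

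\textbf{Step 1: check the assumptions of Theorem~\ref{zero-thm} for $\tilde F$.} The regularized function $\tilde f$ is $\mu$-strongly concave in $\vy$ on $\fY_\delta$ with $\mu=\epsilon/D_y$, since $f$ is concave by Assumption~\ref{asm:concave}. On $\fY_\delta$ we have $\|\vy-\vy_0\|\le D_y+\delta$. Hence the gradient of the quadratic term has norm at most $\epsilon(D_y+\delta)/D_y$, which is $\fO(\epsilon)$ when $\delta\le D_y$. It follows that $\tilde F(\cdot,\cdot;\vxi)$ is $(L(\vxi)+\fO(\epsilon))$-Lipschitz on $\fX_\delta\times\fY_\delta$. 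So Assumption~\ref{asm:Lipschitz} holds with $\tilde L=\fO(L+\epsilon)$.

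For the initial gap, note $f-\epsilon D_y/2\le\tilde f\le f$. Therefore $\tilde\Phi(\vx_0)-\inf_{\fX}\tilde\Phi\le\Delta+\epsilon D_y/2$. The extra term $\epsilon D_y$ is absorbed, or bounded by noting the paper's $\Delta$ already accounts for it up to constants. Assumption~\ref{asm:convex-closed} is unchanged.

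\textbf{Step 2: invoke Theorem~\ref{zero-thm}.} We substitute $L\to\tilde L$ and $\mu\to\epsilon/D_y$. The output $(\vx_{\rm out},\vy_{\rm out})$ is then an $(\eta_x,\tilde\eta_y,\delta,\epsilon/2)$-GSSP of $\tilde f$, with complexity
\begin{align*}
\fO\!\left(\frac{d^3\tilde L^5(\Delta+\delta\tilde L)}{\mu^3\delta^4\epsilon^3}\right)=\fO\!\left(\frac{(L+\epsilon)^5d^3D_y^3(\Delta+\delta L)}{\delta^4\epsilon^6}\right).
\end{align*}
The stepsizes become $\eta_x=\Theta(\mu^2\delta^3/(d^{3/2}\tilde L^3))$ and $\tilde\eta_y=\Theta(\delta/(d^{1/2}\tilde L))$, which are the displayed orders.

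\textbf{Step 3: transfer stationarity from $\tilde f$ back to $f$.} By Proposition~\ref{prop: the smoothed function of the regularized},
\begin{align*}
\nabla\tilde f_\delta=\nabla f_\delta-\Big(\vzero,\tfrac{\epsilon}{D_y}(\vy-\vy_0)\Big).
\end{align*}
Proposition~\ref{prop:smoothing}(d) gives $\nabla f_\delta(\vx_{\rm out},\vy_{\rm out})\in\partial_\delta f(\vx_{\rm out},\vy_{\rm out})$. I would not use the abstract $\min$ over $\partial_\delta\tilde f$. Instead, I work with the specific witness $\nabla f_\delta$, since the proof of Theorem~\ref{zero-thm} certifies stationarity through $\nabla\tilde f_\delta$ via Lemma~\ref{lemma: gradient mapping between Psi_delta and f_delta}.

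The $x$-component of $\nabla\tilde f_\delta$ equals that of $\nabla f_\delta$, so $\|G_x\|\le\epsilon/2$ transfers directly. For the $y$-component, the projection is nonexpansive. So $G_y$ computed with $\nabla_y f_\delta$ differs from $G_y$ computed with $\nabla_y\tilde f_\delta$ by at most $\tfrac{\epsilon}{D_y}\|\vy_{\rm out}-\vy_0\|\le\epsilon$. This gives $\|G_y\|\le\epsilon/2+\epsilon$. Running the theorem with target $\epsilon/4$ and using the regularization coefficient $\epsilon/(4D_y)$ makes both bounds at most $\epsilon$ and only changes constants.

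\textbf{Main obstacle.} The delicate point is Step~3 combined with the non-uniform dependence on $\mu$. Theorem~\ref{zero-thm} hides constants that depend on $\mu$ through Lemma~\ref{distance between y_star and y_star_delta}-type quantities and through $K_{\rm out}$. I would need to re-trace that the $\epsilon$-accuracy of $\vy_{\rm out}$ required in the $y$-maximization stage, namely $\mu\epsilon/(2M_{\tilde f_\delta})$ with $\mu=\epsilon/D_y$, still yields the stated total. The check I would make is that the $K_{\rm out}$ and $K_{\rm in}$ costs are dominated by $T(pb+\tilde b)$ after substitution, and that the $y$-witness uses the same $\nabla f_\delta$ as the $x$-witness.
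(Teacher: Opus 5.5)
Your proposal follows the paper's proof. You verify that $\tilde F$ satisfies Assumption~\ref{asm:Lipschitz} with $\tilde L=\fO(L+\epsilon)$ and that $\tilde f$ is $(\epsilon/D_y)$-strongly concave, then substitute $\tilde L,\tilde\mu$ into Theorem~\ref{zero-thm}. Finally you use the closed form from Proposition~\ref{prop: the smoothed function of the regularized} and Lemma~\ref{lemma: g&G_1} to move the bounds certified for $\nabla\tilde f_\delta$ in that proof over to the witness $\nabla f_\delta\in\partial_\delta f$.

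Two of your refinements are worth keeping.
\begin{itemize}
\item You check Lipschitz continuity on $\fX_\delta\times\fY_\delta$, which is what Assumption~\ref{asm:Lipschitz} actually requires. The paper only checks $\fX\times\fY$.
\item Your rescaling in Step~3 is genuinely needed. The paper asserts $\frac{\epsilon}{D_y}\|\vy-\vy_0\|\le\frac{\epsilon}{2}$, but $\|\vy-\vy_0\|\le D_y$ only gives $\epsilon$. So with the regularizer $\frac{\epsilon}{2D_y}\|\vy-\vy_0\|^2$ exactly as in the statement, the transfer yields $\|G_y\|\le\epsilon$ plus the $\tilde f$-residual, not $\|G_y\|\le\epsilon$. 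A constant-factor change such as yours is required.
\end{itemize}

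The one step that is not justified is the initial gap. Your bound $\tilde\Phi(\vx_0)-\inf_{\fX}\tilde\Phi\le\Delta+\epsilon D_y/2$ is correct. However, $\epsilon D_y$ is not controlled by $\Delta+\delta L$ in general, so ``absorbed'' is not an argument. Done honestly, the complexity carries $\Delta+\delta L+\epsilon D_y$, or else $\Delta$ must be read as the gap of the regularized problem. The paper's proof makes the same substitution silently, so this is an imprecision shared with the stated bound rather than a defect in your strategy.

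The obstacle you anticipate is not a real one. The constants in Theorem~\ref{zero-thm} are absolute, and its proof uses Lemma~\ref{distance between Phi Phi_delta}, which does not involve $\mu$; Lemma~\ref{distance between y_star and y_star_delta} is never used. So substituting $\tilde L,\tilde\mu$ directly into the final count is legitimate, with $K_{\rm in}$ and $K_{\rm out}$ dominated once $\epsilon\lesssim L$ (which is without loss of generality). This is exactly what the paper does.
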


\section{Finding $(\eta_x,\delta,\epsilon)$-GGSPs for the Primal Function}\label{sec: Phi}

In this section, we first propose nested-loop zeroth-order algorithm for 
finding $(\eta,\delta,\epsilon)$-generalized Goldstein stationary points (GGSPs) with respect to the primal function $\Phi(\vx)$, 
then provide the complexity analysis for our algorithm. 

\subsection{Nested-Loop Projected Gradient-Free Descent Ascent}

Recall that Proposition \ref{Lipshitz of Phi} guarantees the Lipschitz continuity of $\Phi(\vx)$. Hence, it is natural to introduce the smooth surrogate function   
\begin{align}\label{def of Phi_delta}
 \Phi_\delta(\vx) \triangleq \BE_{\vw\sim {\mathrm{Unif}}(\BB^{d_x}(\vzero,1))}\left[\max_{\vy \in \fY} f(\vx+\delta\vw, \vy)\right]   
\end{align}
for some $\delta>0$, then consider the minimization problem
\begin{align*}
    \min_{\vx\in\fX} \Phi_\delta(\vx).
\end{align*}
Following the idea of uniform smoothing, we introduce the zeroth-order gradient estimator for the function $\Phi_\delta(\vx)$ as follows
\begin{align*}
\vg^*(\vx;\vw,\vxi) \triangleq \frac{d_x}{2\delta}(F(\vx+\delta\vw, \vy^*(\vx+\delta\vw);\vxi)-F(\vx-\delta\vw,\vy^*(\vx-\delta\vw);\vxi))\cdot\vw,
\end{align*}
where $\vw\sim{\rm Unif}(\mathbb{S}^{d_x-1})$ and $\vxi\sim\fD$.
Since the exact $\vy^*(\cdot)$ may be unavailable, we apply Gradient-Free-Descent (Algorithm \ref{alg:minimization}) to establish $\vy^+\approx\vy^*(\vx+\delta\vw)$ and $\vy^-\approx\vy^*(\vx-\delta\vw)$ for given $\vx$ and $\vw$ in practice, leading to the estimator
\begin{align*}
    \hat{\vg}(\vx,\vy^+,\vy^-;\vw,\vxi) \triangleq  \frac{d_x}{2\delta}\big(F(\vx+\delta\vw,\vy^+;\vxi) - F(\vx-\delta\vw,\vy^-;\vxi)\big)\cdot\vw.
\end{align*}
Based on above ideas, we propose Nested-Loop Projected Gradient-Free Descent Ascent (NL-PGFDA), which is detailed in Algorithm \ref{alg:Phi}.

\subsection{The Complexity Analysis for NL-PGFDA}\label{sec:convergence NL-PGFDA}

For the nonconvex-strongly-concave case,
we provide the following theoretical guarantee for achieving a $(\eta,\delta,\epsilon)$-GGSP with respect to $\Phi(\vx)$.

\begin{algorithm}[t]
\caption{Nested-Loop Projected Gradient-Free Descent Ascent (NL-PGFDA)}
\label{alg:Phi}
\begin{algorithmic}[1]   
\STATE \textbf{Input:} initial point $(\vx_0, \vy_0)\in\fX\times\fY$, step sizes $\eta, \tilde\eta_y>0$, iterations numbers $T, K\in\BN$, smoothing parameter $\delta,\tilde \delta>0$, batch size $b\in\BN$. \\[0.02cm]
\STATE $\vy^+_{-1,i}=\vy_0$ and $\vy^-_{-1,i}=\vy_0$ for all $i\in[b]$ \\[0.02cm]
\STATE \textbf{for}  $t = 0,1,2,\dots, T-1$ \\[0.02cm]
\STATE\qquad   independently sample $\vw_{t,i}\sim{\rm Unif}(\BS^{d-1})$ for all $i\in[b]$ \label{line:nl-pgfda-w} \\[0.05cm]
\STATE\qquad  \textbf{for} $i = 1,2,\dots,b$ \\[0.02cm]
\STATE\qquad \qquad $\vy^+_{t,i}=\text{Gradient-Free-Descent}(-F(\vx_t+\delta\vw_{t,i},\,\cdot\,;\,\cdot\,), \vy^+_{t-1,i}, \mu,\tilde\delta, K)$ \label{line:y+} \\[0.02cm]
\STATE\qquad \qquad $\vy^-_{t,i}=\text{Gradient-Free-Descent} (-F(\vx_t-\delta\vw_{t,i},\,\cdot\,;\,\cdot\,), \vy^-_{t-1,i},\mu,\tilde\delta, K)$ \label{line:y-} \\[0.02cm]
\STATE\qquad \textbf{end for} \\[0.02cm]
\STATE\qquad independently sample $\vxi_{t,i}\sim\fD$ for all $i\in[b]$ \label{line:nl-pgfda-xi} \\[0.02cm]
\STATE\qquad  $\displaystyle{\hat\vu_t=\frac{1}{b}\sum_{i=1}^b\hat{\vg}(\vx_t,\vy_{t,i}^+,\vy_{t,i}^-;\vw_{t,i},\vxi_{t,i})}$ \\[0.02cm]
\STATE\qquad  $\vx_{t+1}=\Pi_\mathcal{X} (\vx_t-\eta\hat\vu_t)$ \\[0.05cm]
\STATE \textbf{end for}\\[0.05cm]
\STATE $j\sim{\rm Unif}(\{0,\dots,T-1\})$, \quad $\vx_{\mathrm{out}}=\vx_j$   \\[0.05cm]
\STATE \textbf{return} $\vx_{\mathrm{out}}$ 
\end{algorithmic}
\end{algorithm}

\begin{theorem}\label{Theorem of Phi}
Under Assumptions  \ref{asm:Lipschitz}, \ref{asm:convex-closed}, \ref{asm:strongly-concave}, run Algorithm \ref{alg:Phi} (NL-PGFDA) with

\[
\eta=\Theta\left(\frac{\delta}{d_x^{1/2}L}\right),\quad \!\! T=\Theta\left(\frac{d_x^{1/2}L(\Delta+\delta L)}{\delta\epsilon^2}\right),\quad\!\!
b=\Theta\left(\frac{d_xL^2}{\epsilon^2}\right),
\quad\!\!
\text{and}\quad\!\!
K=\Theta\left(\frac{d_x^2d_yL^4}{\mu^2\delta^2\epsilon^2}\right),
\]
then it outputs a $(\eta,\delta,\epsilon)$-GGSP 
and the total stochastic zeroth-order oracle complexity is 
\[
\fO\left(\frac{ d_x^{7/2} d_y L^7 (\Delta+\delta L)}{\mu^2\delta^3 \epsilon^6}\right)
\]
in expectation.
\end{theorem}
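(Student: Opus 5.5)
We will analyze NL-PGFDA as projected stochastic gradient descent on the smooth surrogate $\Phi_\delta$ from \eqref{def of Phi_delta}, with a biased gradient estimator.

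\textbf{Properties of $\Phi_\delta$.} By Proposition \ref{Lipshitz of Phi}, $\Phi$ is $L$-Lipschitz on $\fX_\delta$. The standard uniform-smoothing facts, applied to $\Phi$ exactly as in Proposition \ref{prop:smoothing} but in dimension $d_x$, then give three properties:
\begin{itemize}
\item $|\Phi_\delta-\Phi|\le \delta L$;
\item $\Phi_\delta$ is $M=c\sqrt{d_x}L/\delta$-smooth;
\item $\nabla\Phi_\delta(\vx)\in\partial_\delta\Phi(\vx)$.
\end{itemize}
The last property reduces the goal to showing $\BE\|G(\vx_{\rm out},\nabla\Phi_\delta(\vx_{\rm out}),\eta)\|\le\epsilon$. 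The exact estimator $\vg^*(\vx;\vw,\vxi)$ is unbiased for $\nabla\Phi_\delta(\vx)$. Its second moment is $\fO(d_xL^2)$, by the same argument as Proposition \ref{prop:g-unbiased-variance}. That argument uses concentration on the sphere of the Lipschitz map $\vw\mapsto F(\vx+\delta\vw,\vy^*(\vx+\delta\vw);\vxi)$. Since $\vy^*(\cdot)$ need not be Lipschitz, I would instead bound the variance through $\Phi(\vx+\delta \vw)-\Phi(\vx-\delta\vw)$, using the Lipschitzness of $\Phi$ and the concentration of $\vw\mapsto\Phi(\vx+\delta\vw)$ plus a noise term. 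Averaging over $b$ samples then gives variance $\fO(d_xL^2/b)$.

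\textbf{Descent step.} I would use the standard analysis of projected SGD for an $M$-smooth function over a convex set with $\eta\le 1/(2M)$, which is the choice $\eta=\Theta(\delta/(\sqrt{d_x}L))$. Writing $\hat\vu_t=\nabla\Phi_\delta(\vx_t)+\ve_t$, this yields
\[
\BE\Phi_\delta(\vx_{t+1})\le \BE\Phi_\delta(\vx_t)-\frac{\eta}{4}\BE\|G(\vx_t,\nabla\Phi_\delta(\vx_t),\eta)\|^2+\fO(\eta)\,\BE\|\ve_t\|^2 .
\]
This uses nonexpansiveness of the projection to pass from the stochastic mapping to the exact one. The error splits as $\ve_t=(\hat\vu_t-\vu^*_t)+(\vu^*_t-\nabla\Phi_\delta(\vx_t))$, where $\vu^*_t$ is the batch average of $\vg^*$. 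The second piece contributes $\fO(d_xL^2/b)$.

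\textbf{Inner-solver bias.} The first piece is the main obstacle. Each of its terms is bounded by
\[
\frac{d_x}{2\delta}\,L(\vxi)\,\big(\|\vy^+_{t,i}-\vy^*(\vx_t+\delta\vw_{t,i})\|+\|\vy^-_{t,i}-\vy^*(\vx_t-\delta\vw_{t,i})\|\big).
\]
By Lemma \ref{thm:zeroth-order minimization} with $\tilde\epsilon$ and $\nu=\tilde\delta=\tilde\epsilon/(4L)$, combined with $\mu$-strong concavity, each inner output satisfies $\BE\|\vy^\pm-\vy^*\|^2\le 2\tilde\epsilon/\mu$. This holds regardless of the warm start, since the bound of Lemma \ref{thm:zeroth-order minimization} does not depend on initialization. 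One subtlety is that $\xi_{t,i}$ is sampled independently after the inner loops, which lets us factor out $\BE L(\vxi)^2\le L^2$. This gives $\BE\|\hat\vu_t-\vu^*_t\|^2\le \fO(d_x^2L^2\tilde\epsilon/(\mu\delta^2))$. Requiring this to be at most $\epsilon^2$ forces $\tilde\epsilon=\Theta(\mu\delta^2\epsilon^2/(d_x^2L^2))$. With this choice, $K=\Theta(d_yL^2/(\mu\tilde\epsilon))=\Theta(d_x^2d_yL^4/(\mu^2\delta^2\epsilon^2))$, matching the statement.

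\textbf{Telescoping and complexity.} Summing over $t$ gives
\[
\frac1T\sum_t\BE\|G_t\|^2\le \frac{4(\Phi_\delta(\vx_0)-\inf\Phi_\delta)}{\eta T}+\fO(\epsilon^2),
\]
and $\Phi_\delta(\vx_0)-\inf_\fX\Phi_\delta\le\Delta+2\delta L$. Taking $T=\Theta(\sqrt{d_x}L(\Delta+\delta L)/(\delta\epsilon^2))$ and $b=\Theta(d_xL^2/\epsilon^2)$ makes the average at most $\epsilon^2$. The random output index $j$ and Jensen's inequality then give the result. The oracle complexity is $T\cdot b\cdot(2K+2)$, which multiplies out to the stated $\fO\big(d_x^{7/2}d_yL^7(\Delta+\delta L)/(\mu^2\delta^3\epsilon^6)\big)$.
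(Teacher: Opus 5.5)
Your architecture is the same as the paper's. You run projected SGD on $\Phi_\delta$ with $\eta=1/(2M)$, split $\hat\vu_t-\nabla\Phi_\delta(\vx_t)$ into the inner-solver bias $\hat\vu_t-\vu^*_t$ (handled via Lemma~\ref{thm:zeroth-order minimization}, strong concavity, and the freshness of $\vxi_{t,i}$) and the sampling error $\vu^*_t-\nabla\Phi_\delta(\vx_t)$, and choose the same $\tilde\epsilon$, $K$, $T$, $b$.

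The genuine gap is the bound $\BE\|\vg^*(\vx;\vw,\vxi)\|^2=\fO(d_xL^2)$, on which $b=\Theta(d_xL^2/\epsilon^2)$ rests. You correctly observe that the sphere-concentration argument needs $\vw\mapsto F(\vx+\delta\vw,\vy^*(\vx+\delta\vw);\vxi)$ to be $L(\vxi)$-Lipschitz, and the assumptions do not provide this. The paper's proof asserts the same bound by citing Corollary~1 of \citet{chen2023faster}, which presupposes exactly this property, so the paper has the same hole.

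Your repair only covers the deterministic part $\frac{d_x}{2\delta}\big(\Phi(\vx+\delta\vw)-\Phi(\vx-\delta\vw)\big)\vw$. The leftover noise term is $\frac{d_x}{2\delta}(N^+-N^-)\vw$, with $N^\pm=F(\vz^\pm;\vxi)-f(\vz^\pm)$ and $\vz^\pm=(\vx\pm\delta\vw,\vy^*(\vx\pm\delta\vw))$. Mean-squared Lipschitzness controls it only through $\|\vz^+-\vz^-\|^2=4\delta^2+\|\vy^*(\vx+\delta\vw)-\vy^*(\vx-\delta\vw)\|^2$. The last term is in general only bounded by $4L\delta/\mu$ (add the two strong-concavity inequalities at $\vx\pm\delta\vw$). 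So what the assumptions actually yield is $\BE\|\vg^*\|^2\le d_x^2L^2\big(1+L/(\mu\delta)\big)$, not $\fO(d_xL^2)$.

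A sharper argument cannot fix this, because the claimed bound is false. Take $d_y=1$, $\fY=[-1,1]$, $\fX=[-\mu/2,\mu/2]^{d_x}$ with $\mu\le 1$, $\delta\le\mu/2$, $\xi=\pm1$ equiprobable, and $F(\vx,y;\xi)=\xi y-\frac{\mu}{2}y^2+x_1y$. All assumptions hold with $L\le 4$, and $y^*(\vx)=x_1/\mu$ on $\fX_\delta$. A direct computation gives $\vg^*(\vx;\vw,\xi)=\frac{d_x}{\mu}(\xi+x_1)w_1\vw$. Hence $\BE\|\vg^*-\nabla\Phi_\delta(\vx)\|^2\ge d_x/\mu^2$, which is unbounded relative to $d_xL^2$ as $\mu\to 0$. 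So the $\mu$-independent batch size in the statement cannot be justified along this route.

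Two things do survive. If $F(\cdot;\vxi)\equiv f$ is deterministic, the noise term vanishes and your $\Phi$-based argument is complete as written. In the stochastic case, you must use the $\mu$-dependent bound above. With it the analysis goes through for $b=\Theta\big(d_x^2L^2(1+L/(\mu\delta))/\epsilon^2\big)$, at a correspondingly higher oracle complexity than the one stated.
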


For the general nonconvex-concave case, 
we introduce the following regularized function
\begin{align}\label{eq:tilde-Phi}
    \tilde\Phi(\vx)\triangleq\max_{\vy\in\fY} \Big(f(\vx,\vy)-\frac{\delta\epsilon}{2 d_x D_y^2}\|\vy-\vy_0\|^2\Big),
\end{align}
and its smooth surrogate 
\begin{align}\label{Phi_mu,Phi_delta,mu}
\tilde\Phi_{\delta}(\vx) \triangleq \mathbb{E}_{\vw\sim \mathrm{Unif}(\mathbb{B}^{d_x}(\vzero,1))}\big[\tilde\Phi(\vx + \delta \vw)\big].   
\end{align}
We connect the gradient mappings with respect to $\nabla\Phi_\delta(\cdot)$ and $\nabla\tilde\Phi_{\delta}(\cdot)$ as follows.

\begin{proposition}\label{gradient distance of the strongly concave approximation}
  Under Assumption \ref{asm:Lipschitz}, \ref{asm:convex-closed}, \ref{asm:concave},  it holds
\begin{align*}
\big\|G(\vx,\nabla\Phi_\delta(\vx),\eta) - G(\vx,\nabla\tilde\Phi_{\delta}(\vx),\eta)\big\|
\le \frac{\epsilon}{2}.
\end{align*}
\end{proposition}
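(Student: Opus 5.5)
The plan is to reduce the statement to a bound on the difference of the two smoothed gradients, and then control that difference through the standard sphere representation of the gradient of a uniformly smoothed function.

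\emph{Step 1: non-expansiveness of the projection.} Since $\Pi_\fX$ is non-expansive, for any $\vg_1,\vg_2\in\BR^{d_x}$ we have
\begin{align*}
\|G(\vx,\vg_1,\eta)-G(\vx,\vg_2,\eta)\| = \frac{1}{\eta}\big\|\Pi_\fX(\vx-\eta\vg_2)-\Pi_\fX(\vx-\eta\vg_1)\big\| \le \|\vg_1-\vg_2\|.
\end{align*}
It therefore suffices to show $\|\nabla\Phi_\delta(\vx)-\nabla\tilde\Phi_\delta(\vx)\|\le\epsilon/2$.

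\emph{Step 2: pointwise gap between $\Phi$ and $\tilde\Phi$.} For every $\vz\in\fX_\delta$ and $\vy\in\fY$, the regularizer satisfies
\begin{align*}
0\le \frac{\delta\epsilon}{2d_xD_y^2}\|\vy-\vy_0\|^2\le \frac{\delta\epsilon}{2d_x},
\end{align*}
because $\vy,\vy_0\in\fY$ and $\fY$ has diameter $D_y$ (Assumption~\ref{asm:convex-closed}). Taking the maximum over $\vy\in\fY$ on both sides of the sandwich
\begin{align*}
f(\vz,\vy)-\frac{\delta\epsilon}{2d_x}\le f(\vz,\vy)-\frac{\delta\epsilon}{2d_xD_y^2}\|\vy-\vy_0\|^2\le f(\vz,\vy)
\end{align*}
gives $0\le\Phi(\vz)-\tilde\Phi(\vz)\le\delta\epsilon/(2d_x)$.

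\emph{Step 3: sphere representation and the final estimate.} Both $\Phi$ and $\tilde\Phi$ are Lipschitz on $\fX_\delta$.
\begin{itemize}
\item For $\Phi$ this is Proposition~\ref{Lipshitz of Phi}.
\item For $\tilde\Phi$, the regularized objective is Lipschitz in $\vx$ with the same constant, and the concavity structure is preserved, so the same argument applies.
\end{itemize}
Hence the uniform ball smoothing identity applies:
\begin{align*}
\nabla\Phi_\delta(\vx)=\frac{d_x}{\delta}\,\BE_{\vw\sim{\rm Unif}(\BS^{d_x-1})}\big[\Phi(\vx+\delta\vw)\vw\big],
\end{align*}
and the analogous formula holds for $\tilde\Phi_\delta$. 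This is the same identity underlying the unbiasedness in Proposition~\ref{prop:g-unbiased-variance}; it follows from the divergence theorem and needs only Lipschitz continuity. Subtracting the two formulas and using Jensen's inequality with $\|\vw\|=1$,
\begin{align*}
\|\nabla\Phi_\delta(\vx)-\nabla\tilde\Phi_\delta(\vx)\|\le\frac{d_x}{\delta}\,\BE\big[|\Phi(\vx+\delta\vw)-\tilde\Phi(\vx+\delta\vw)|\big]\le\frac{d_x}{\delta}\cdot\frac{\delta\epsilon}{2d_x}=\frac{\epsilon}{2}.
\end{align*}
Combining this with Step 1 gives the claim.

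The only delicate point is justifying the sphere formula for the gradient, which requires $\Phi$ and $\tilde\Phi$ to be Lipschitz on all of $\fX_\delta$, since the smoothing evaluates them at $\vx+\delta\vw$. This is handled by Proposition~\ref{Lipshitz of Phi} together with its analogue for $\tilde\Phi$. Everything else is elementary.
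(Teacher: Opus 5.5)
Your proof is correct and follows the paper's argument essentially step for step: the same sandwich bound $0\le\Phi-\tilde\Phi\le\delta\epsilon/(2d_x)$ coming from the diameter of $\fY$, the same sphere representation of $\nabla\Phi_\delta$ and $\nabla\tilde\Phi_\delta$ (the paper cites Lemma 2.1 of Flaxman et al.), and non-expansiveness of $\Pi_\fX$. Your version is slightly more careful in two places. You bound the gap on $\fX_\delta$, which is where $\vx+\delta\vw$ actually lies, whereas the paper takes the supremum over $\fX$. Your constant also lands exactly on $\epsilon/2$, whereas the paper's last display drops the factor $1/2$.
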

The above proposition indicates that we can reduce the task of
finding an approximate GGSP with respect to function $\Phi(\vx)$
into finding an approximate GGSP with respect to function $\tilde\Phi(\vx)$.
It is worth noting that the objective of the regularized problem
\begin{align}\label{prob:regularized2}
\min_{\vx\in\fX}\max_{\vy\in\fY} f(\vx,\vy)-\frac{\delta\epsilon}{2 d_x D_y^2}\|\vy-\vy_0\|^2    
\end{align}
is strongly-concave in $\vy$ when $f(\vx,\vy)$ is concave in $\vy$.
Hence,  we can apply Algorithm \ref{alg:Phi} (NL-PGFDA)
to solve the nonconvex-strongly-concave minimax problem \eqref{prob:regularized2}
to achieve the desired approximate GGSP with respect to $\tilde\Phi(\vx)$, 
also the desired approximate GGSP with respect to $\Phi(\vx)$.
We formally present the theoretical guarantees as follows.

\begin{corollary}\label{thm: complexity of phi: concave}
    Under Assumptions  \ref{asm:Lipschitz}, \ref{asm:convex-closed}, and \ref{asm:concave}, we run NL-PGFDA (Algorithm \ref{alg:Phi}) to solve the regularized minimax problem \eqref{prob:regularized2} with the stochastic zeroth-order oracle 
\begin{align*}
\tilde F_\Phi(\vx,\vy;\vxi) \triangleq F(\vx,\vy;\vxi) - \frac{\delta\epsilon}{2 d_x D_y^2}\|\vy-\vy_0\|^2    
\end{align*}
and appropriate parameter settings, then it outputs a $(\eta,\delta,\epsilon)$-GGSP with respect to $\Phi(\vx)$ and the total stochastic zeroth-order oracle complexity is
\begin{align*}
\fO\left(\frac{ d_x^{11/2} d_y \big(L+\delta\epsilon/(d_x D_y)\big)^7D_y^4 \big(\Delta+\delta L+\delta^2\epsilon/(d_x D_y)\big)}{\delta^5 \epsilon^8}\right),    
\end{align*}
where $\eta=\Theta\big(\delta d_xD_y/(d_x^{3/2}D_y L+d_x^{1/2}\delta\epsilon)\big)$.
\end{corollary}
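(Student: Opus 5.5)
The proof reduces the concave problem to the strongly concave one. The regularized objective $\tilde f(\vx,\vy)=f(\vx,\vy)-\frac{\delta\epsilon}{2d_xD_y^2}\|\vy-\vy_0\|^2$ is $\tilde\mu$-strongly concave in $\vy$ on $\fY_\delta$ with $\tilde\mu=\delta\epsilon/(d_xD_y^2)$. Its stochastic component $\tilde F_\Phi$ is mean-squared Lipschitz on $\fX_\delta\times\fY_\delta$. The regularizer's gradient has norm at most $\tilde\mu(D_y+\delta)$ there, so the constant is $\tilde L=L+\fO(\delta\epsilon/(d_xD_y))$. This gives $\tilde L=L+\fO(\delta\epsilon/(d_xD_y))$ exactly as in the stated bound (using $\delta\lesssim D_y$). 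The primal function of the regularized problem is $\tilde\Phi$ from \eqref{eq:tilde-Phi}. Its initial gap satisfies $\tilde\Phi(\vx_0)-\inf_{\fX}\tilde\Phi\le\Delta+\tilde\mu D_y^2/2=\Delta+\fO(\delta\epsilon/d_x)$. This is absorbed into the $\Delta+\delta L+\delta^2\epsilon/(d_xD_y)$ factor up to constants, or simply bounded by it.

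Next I would apply Theorem \ref{Theorem of Phi} to problem \eqref{prob:regularized2} with $(L,\mu,\Delta)$ replaced by $(\tilde L,\tilde\mu,\tilde\Delta)$ and target accuracy $\epsilon/2$. Assumptions \ref{asm:Lipschitz}, \ref{asm:convex-closed} and \ref{asm:strongly-concave} hold for the regularized problem, and NL-PGFDA only queries $\tilde F_\Phi$. Inspecting that theorem's proof, what it actually controls is $\BE\|G(\vx_{\rm out},\nabla\tilde\Phi_\delta(\vx_{\rm out}),\eta)\|\le\epsilon/2$, with $\tilde\Phi_\delta$ the uniform smoothing \eqref{Phi_mu,Phi_delta,mu}. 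The stepsize is $\eta=\Theta(\delta/(d_x^{1/2}\tilde L))$, which matches the stated $\eta=\Theta(\delta d_xD_y/(d_x^{3/2}D_yL+d_x^{1/2}\delta\epsilon))$. Proposition \ref{gradient distance of the strongly concave approximation} then gives $\|G(\vx_{\rm out},\nabla\Phi_\delta(\vx_{\rm out}),\eta)\|\le\epsilon$. Since $\Phi$ is Lipschitz on $\fX_\delta$ (Proposition \ref{Lipshitz of Phi}), the analogue of Proposition \ref{prop:smoothing}(d) gives $\nabla\Phi_\delta(\vx)\in\partial_\delta\Phi(\vx)$. Hence $\vx_{\rm out}$ is an $(\eta,\delta,\epsilon)$-GGSP of $\Phi$.

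For the complexity, I would substitute into $d_x^{7/2}d_y\tilde L^7(\tilde\Delta+\delta\tilde L)/(\tilde\mu^2\delta^3\epsilon^6)$. Using $\tilde\mu^{-2}=d_x^2D_y^4/(\delta^2\epsilon^2)$ gives $d_x^{11/2}d_y\tilde L^7D_y^4(\tilde\Delta+\delta\tilde L)/(\delta^5\epsilon^8)$. Expanding $\delta\tilde L=\delta L+\delta^2\epsilon/(d_xD_y)$ then yields the claim.

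The main obstacle is that Theorem \ref{Theorem of Phi} is stated as outputting a GGSP of the primal function rather than as a bound on the smoothed gradient mapping. I would reopen its proof to extract the intermediate bound on $G(\cdot,\nabla\tilde\Phi_\delta,\eta)$ and apply it with target $\epsilon/2$. I would also verify that its parameter choices ($b$, $K$ scaling with $\tilde\mu^{-2}$) remain valid with $\tilde\mu$ depending on $\epsilon$. I would further check that Proposition \ref{gradient distance of the strongly concave approximation} is stated for the same $\eta$ used here, i.e.\ that its proof bounds $\|\nabla\Phi_\delta-\nabla\tilde\Phi_\delta\|$ uniformly and then uses nonexpansiveness of the projection, which is independent of $\eta$.
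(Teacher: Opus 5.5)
Your proposal is correct and follows the paper's proof essentially step for step. The shared steps are:
\begin{itemize}
\item show the regularized oracle is mean-squared Lipschitz with $\tilde L=L+\fO(\delta\epsilon/(d_xD_y))$ and $\tilde\mu$-strongly concave with $\tilde\mu=\delta\epsilon/(d_xD_y^2)$;
\item rerun the proof of Theorem~\ref{Theorem of Phi} with target $\epsilon/2$ to bound $\BE\|G(\vx_{\rm out},\nabla\tilde\Phi_\delta(\vx_{\rm out}),\eta)\|$;
\item transfer this to $\Phi_\delta$ via Proposition~\ref{gradient distance of the strongly concave approximation} and the triangle inequality, and conclude via $\nabla\Phi_\delta(\vx)\in\partial_\delta\Phi(\vx)$;
\item substitute $\tilde L,\tilde\mu$ into the complexity bound.
\end{itemize}
Your extra bookkeeping is a harmless refinement of the paper's argument. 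This covers the Lipschitz constant on $\fY_\delta$ and the shifted gap $\tilde\Delta\le\Delta+\delta\epsilon/(2d_x)$, which the paper silently replaces by $\Delta$. Absorbing $\delta\epsilon/(2d_x)$ into $\Delta+\delta L$ is justified because only $\epsilon<L$ is nontrivial: every $\vx\in\fX$ already satisfies $\|G(\vx,\vg,\eta)\|\le\|\vg\|\le L$ for $\vg\in\partial_\delta\Phi(\vx)$.
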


\section{Comparison with Related Work}\label{sec:disscussion}

In a recent work, \citet{lin2025two} considered the nonsmooth minimax problem 
\begin{equation}\label{prob:lin}
\min_{\vx\in\BR^{d_x}}\max_{\vy\in\mathcal{Y}}f(\vx,\vy)
\end{equation}
where the objective $f(\vx,\vy)$ is $L_f$-Lipschitz continuous and concave in $\vy$.
They provided two-timescale first-order methods to find the $\epsilon$-stationary point of the Moreau envelope of the primal function $\Phi(\vx) \triangleq \max_{\vy \in \mathcal{Y}} f(\vx, \vy)$, 
i.e., achieving the point $\vx_{\rm out}\in\BR^{d_x}$ such that 
\begin{align*}
\|\nabla \Phi_{1/2\rho}(\vx_{\rm out})\|\le\epsilon,
\end{align*}
where the Moreau envelope is defined as
\begin{align}\label{eq:ME}
\Phi_{1/2\rho}(\vx)=\min_{\vw\in\mathbb{R}^{d_x}} \left(\Phi(\vw)+\rho\|\vw-\vx\|^2\right).    
\end{align}
For achieved $\epsilon$-stationary point $\vx_{\rm out}$,
Lemma 9 of \citet{lin2025two} indicates there exists some $\hat\vx\in\BR^{d_x}$, such that
$\min_{\vg\in\partial \Phi(\hat\vx)}\|\vg\|\le\epsilon$
and
$\|\vx_{\rm out}-\hat\vx\|\le{\epsilon}/{(2\rho)}$.
Furthermore, it holds
\begin{align*}
\left\{\partial \Phi(\hat\vx):\|\vx_{\rm out}-\hat\vx\|\le\frac{\epsilon}{2\rho}\right\}\subseteq
 \operatorname{conv} \left(\bigcup_{\vx \in \BB(\vx_{\rm out},\epsilon/2\rho)} \partial \Phi(\vx) \right)
 =\partial_{\epsilon/2\rho} \Phi(\vx).
\end{align*}
Therefore, the point $\vx_{\rm out}$ is a $(\epsilon/2\rho,\epsilon)$-Goldstein stationary point of $\Phi(\vx)$.

We summarize the main difference between results of \citet{lin2025two} and ours as follows. 
\begin{enumerate}[leftmargin=0.6cm,topsep=1.5pt,itemsep=2pt,partopsep=2pt,parsep=2pt]
\item \citet{lin2025two} requires the Moreau envelope \eqref{eq:ME} be well-defined, which depends on the weakly convex assumption, i.e., the function $f(\vx,\vy)+\frac{\rho}{2}\|\vx\|^2$ is convex in~$\vx$ for some~$\rho>0$.
In contrast, our results do not depend on the weak convexity, which includes more practical applications. 
\item The results of \citet{lin2025two} with respect to the Moreau envelope only focus on the case where $\vx$ is unconstrained, while our results with respect to approximate Goldstein saddle stationary points and generalized Goldstein stationary points address the more general constrained case.
\item In the view of $(\delta,\epsilon)$-Goldstein stationary point of $\Phi(\vx)$, the result of \citet{lin2025two} restricts the value of $\delta$ to depend on the weakly convex parameter $\rho$. 
In contrast, our results (Theorem \ref{Theorem of Phi} and Corollary \ref{thm: complexity of phi: concave} for the unconstrained case) holds for any $\delta>0$.
\item \citet{lin2025two} focus on the first-order methods, which requires the objective function is $L_f$-Lipschitz continuous and the stochastic first-order oracle has bounded variance $\sigma^2$.
In contrast, our work focus on the zeroth-order methods, which requires the stronger mean-squared Lipschitz continuity (Assumption \ref{asm:Lipschitz}) but does not need the bounded variance assumption.
\end{enumerate}

\section{Numerical Experiments}\label{Sec: Experiments}

We evaluate proposed PGFDA (Algorithm \ref{alg:f}) and NL-PGFDA (Algorithm \ref{alg:Phi}) on the problem of poisoning attack \citep{biggio2012poisoning,huang2022accelerated,liu2020min}, which is formulated by the minimax problem
\begin{align*}
\min_{\mathbf{x}\in\mathcal{X}} \max_{\mathbf{y}\in\mathcal{Y}}
f(\vx,\vy)
= \frac{1}{|\fD_p|}\sum_{(a_i,b_i)\in\fD_p}\!\!\!\!l(\vx,\vy;\va_i,b_i) 
+ \frac{1}{|\fD_t|}\sum_{(a_i,b_i)\in\fD_t}\!\!\!\!l(\vx,\vzero;\va_i,b_i) 
+ \lambda R(\vx),
\end{align*}
where $\{\va_i,b_i\}_{i=1}^n=\fD_p\cup\fD_t$ is the training dataset of $n$ samples with the feature $\va_i\in\mathbb{R}^d$ and the label $b_i\in\{-1,1\}$, $\vx=[x_1,\dots,x_d]^\top\in\BR^d$ is the parameter of the model,~$\vy\in\BR^d$ is the perturbation vector for corruption, $\lambda>0$ is the regularized hyperparameter, $\fX=\BR^d$, and~$\mathcal{Y}=\{\vy:\|\vy\|_\infty\le r\}$ for some $r>0$.
Here, we take the hinge loss \citep{biggio2012poisoning}
\begin{align*}
l_i(\vx,\vy;\va_i,b_i)=\max\{1-b_i(\va_i+\vy)^\top \vx, 0\}    
\end{align*}
and the capped-$\ell_1$ regularization \citep{zhang2010analysis}
\begin{align*}
R(\vx)=\sum_{j=1}^d\min\{|x_j|,\beta\}.
\end{align*}
for some $\beta>0$.

We perform our experiments on datasets \texttt{a9a} ($n=32561$, $d=123$), \texttt{w8a} ($n=49749$, $d=300$), and \texttt{ijcnn1} ($n=49990$, $d=22$) \citep{chang2011libsvm}.
For the problem of poisoning attack,
we let the corrupted rate $|\fD_p|/(|\fD_p|+|\fD_t|)$ be 0.15 and set $r=2$, $\lambda=10^{-5}/n$, and $\beta=2$.

For proposed methods, we tune the stepsizes $\eta$, $\eta_x$, and $\eta_y$ from
$\{10^{-5}, 5\times 10^{-5}, 10^{-4}, 5\times 10^{-4}, 10^{-3}, 
5\times 10^{-3},10^{-2}, 5\times 10^{-2}, 10^{-1}\}$
and the smoothing parameters $\delta$ and~$\tilde\delta$ from
$\{10^{-3}, 5\times 10^{-3}, 10^{-2}, 5\times 10^{-2}, 10^{-1}\}$.
For PGFDA, we set 
the probability as $p=0.1$ and the batch sizes as $b=100$ and $\tilde{b}=1000$.
For NL-PGFDA, we set batch size as $b=300$ and the number of inner-loop iterations as $K=50$.
We present the empirical results in Figure \ref{fig:three_comparison} to demonstrate the number of stochastic zeroth-order oracle (SZO) calls against the primal function value $\Phi(\vx)$.
We can observe that both two proposed methods converge successfully.

\begin{figure}[ht]
    \centering
    \begin{subfigure}[b]{0.325\textwidth}
        \centering
        \includegraphics[width=\linewidth]{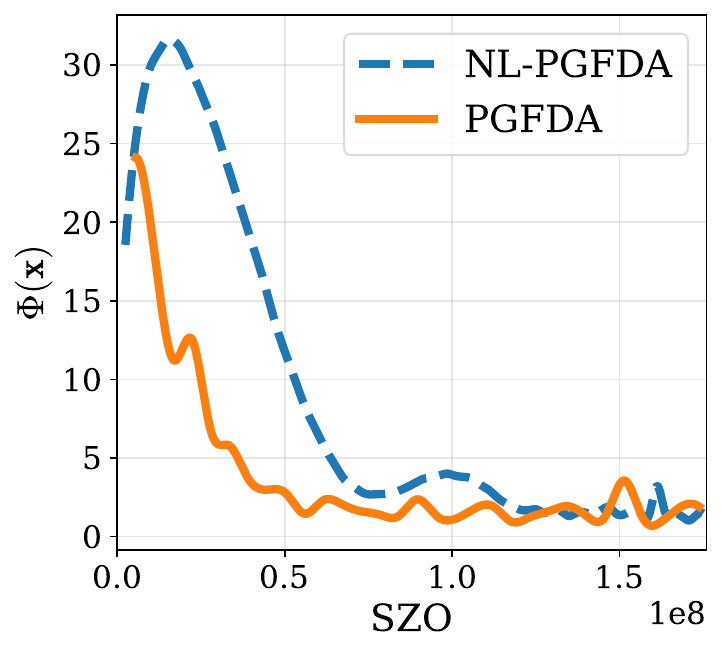}
        \caption{a9a}
    \end{subfigure}
    \hfill 
    \begin{subfigure}[b]{0.325\textwidth}
        \centering
        \includegraphics[width=\linewidth]{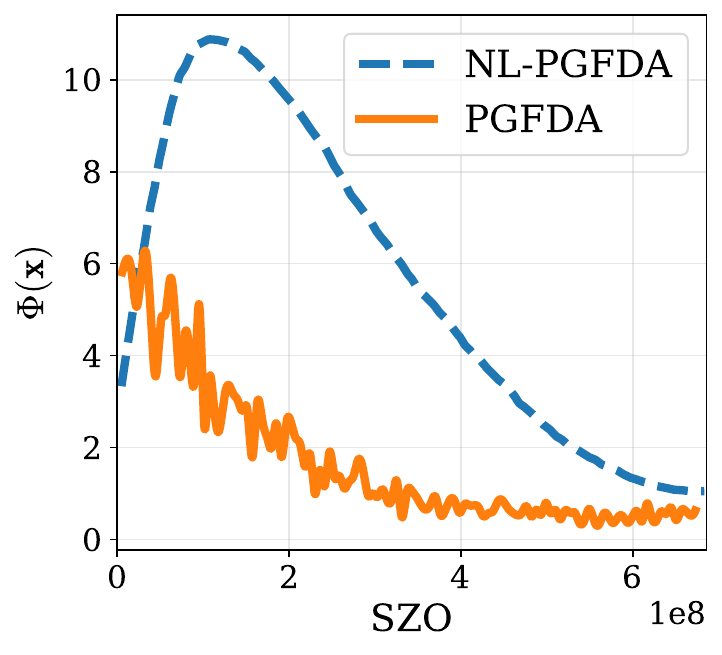}
        \caption{w8a}
    \end{subfigure} 
    \hfill
    \begin{subfigure}[b]{0.325\textwidth}
        \centering
        \includegraphics[width=\linewidth]{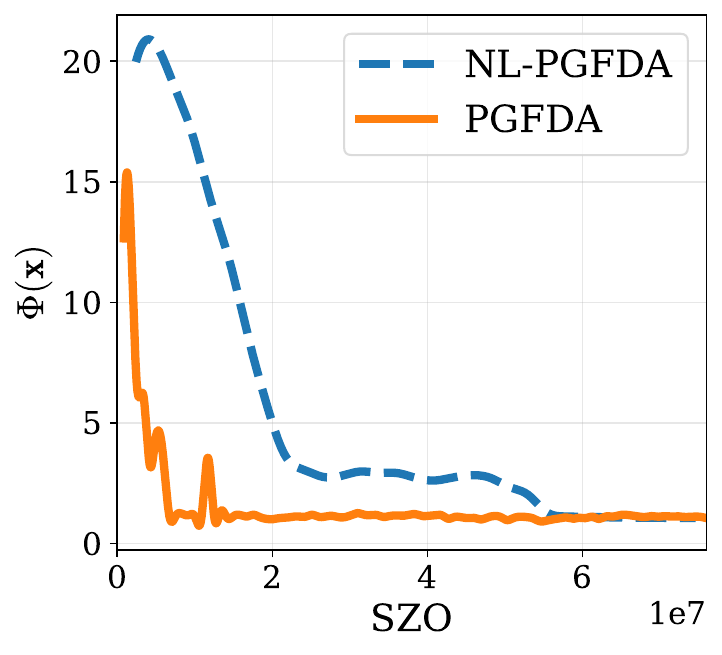}
        \caption{ijcnn1}
    \end{subfigure}
    \caption{Stochastic zeroth-order oracle (SZO) calls against the primal function value $\Phi(\vx)$}
    \label{fig:three_comparison}
\end{figure}

\section{Conclusion and Future Directions}\label{sec:conclusion}

In this work, we introduce approximate Goldstein saddle stationary points to characterize the stationarity of general constrained nonsmooth nonconvex-concave minimax problem without weak convexity. 
We then establish gradient-free descent ascent methods with non-asymptotic convergence rates with respect to this measure.
Furthermore, we propose nested-loop method to achieve  approximate generalized Goldstein stationary points of the primal function.
The convergence of proposed methods also have been verified by numerical experiments.

In future work, we would like to study first-order methods for nonsmooth nonconvex-concave minimax problem.
It is also interesting to consider the convergence of specific nonsmooth nonconvex-nonconcave minimax problem with respect to proposed approximate Goldstein saddle stationary points \citep{li2025nonsmooth,lu2025first,khan2026variance}.

\bibliographystyle{plainnat}
\bibliography{reference}

\appendix 
\makeatletter
\g@addto@macro\appendix{%
  \renewcommand\normalsize{\small}
  \renewcommand\small{\small}
  \renewcommand\footnotesize{\small}
}
\makeatother

\def\E{{\mathbb{E}}}
\def\vzero{{\bf{0}}}
\def\vone{{\bf{1}}}
\def\vmu{{\bm{\mu}}}
\def\vphi{{\bm{\phi}}}
\def\veta{{\bm{\eta}}}
\def\valpha{{\bm{\alpha}}}
\def\vbeta{{\bm{\beta}}}
\def\vtheta{{\bm{\theta}}}
\def\va{{\bf{a}}}
\def\vb{{\bf{b}}}
\def\vc{{\bf{c}}}
\def\vd{{\bf{d}}}
\def\ve{{\bf{e}}}
\def\vf{{\bf{f}}}
\def\vg{{\bf{g}}}
\def\vh{{\bf{h}}}
\def\vi{{\bf{i}}}
\def\vj{{\bf{j}}}
\def\vk{{\bf{k}}}
\def\vl{{\bf{l}}}
\def\vm{{\bf{m}}}
\def\vn{{\bf{n}}}
\def\vo{{\bf{o}}}
\def\vp{{\bf{p}}}
\def\vq{{\bf{q}}}
\def\vr{{\bf{r}}}
\def\vs{{\bf{s}}}
\def\vt{{\bf{t}}}
\def\vu{{\bf{u}}}
\def\vv{{\bf{v}}}
\def\vw{{\bf{w}}}
\def\vx{{\bf{x}}}
\def\vy{{\bf{y}}}
\def\vz{{\bf{z}}}

\def\fA{{\mathcal{A}}}
\def\fB{{\mathcal{B}}}
\def\fC{{\mathcal{C}}}
\def\fD{{\mathcal{D}}}
\def\fE{{\mathcal{E}}}
\def\fF{{\mathcal{F}}}
\def\fG{{\mathcal{G}}}
\def\fH{{\mathcal{H}}}
\def\fI{{\mathcal{I}}}
\def\fJ{{\mathcal{J}}}
\def\fK{{\mathcal{K}}}
\def\fL{{\mathcal{L}}}
\def\fM{{\mathcal{M}}}
\def\fN{{\mathcal{N}}}
\def\fO{{\mathcal{O}}}
\def\fP{{\mathcal{P}}}
\def\fQ{{\mathcal{Q}}}
\def\fR{{\mathcal{R}}}
\def\fS{{\mathcal{S}}}
\def\fT{{\mathcal{T}}}
\def\fU{{\mathcal{U}}}
\def\fV{{\mathcal{V}}}
\def\fW{{\mathcal{W}}}
\def\fX{{\mathcal{X}}}
\def\fY{{\mathcal{Y}}}
\def\fZ{{\mathcal{Z}}}

\def\sA{{\mathbb{A}}}
\def\sB{{\mathbb{B}}}
\def\sC{{\mathbb{C}}}
\def\sD{{\mathbb{D}}}
\def\BE{{\mathbb{E}}}
\def\BB{{\mathbb{B}}}
\def\BF{{\mathbb{F}}}
\def\BG{{\mathbb{G}}}
\def\BH{{\mathbb{H}}}
\def\BI{{\mathbb{I}}}
\def\BJ{{\mathbb{J}}}
\def\BK{{\mathbb{K}}}
\def\BL{{\mathbb{L}}}
\def\BM{{\mathbb{M}}}
\def\BN{{\mathbb{N}}}
\def\BO{{\mathbb{O}}}
\def\BP{{\mathbb{P}}}
\def\BQ{{\mathbb{Q}}}
\def\BR{{\mathbb{R}}}
\def\BS{{\mathbb{S}}}
\def\BT{{\mathbb{T}}}
\def\BU{{\mathbb{U}}}
\def\BV{{\mathbb{V}}}
\def\BW{{\mathbb{W}}}
\def\BX{{\mathbb{X}}}
\def\BY{{\mathbb{Y}}}
\def\BZ{{\mathbb{Z}}}

\def\mA {{\bf A}}
\def\mB {{\bf B}}
\def\mC {{\bf C}}
\def\mD {{\bf D}}
\def\mE {{\bf E}}
\def\mF {{\bf F}}
\def\mG {{\bf G}}
\def\mH {{\bf H}}
\def\mI {{\bf I}}
\def\mJ {{\bf J}}
\def\mK {{\bf K}}
\def\mL {{\bf L}}
\def\mM {{\bf M}}
\def\mN {{\bf N}}
\def\mO {{\bf O}}
\def\mP {{\bf P}}
\def\mQ {{\bf Q}}
\def\mR {{\bf R}}
\def\mS {{\bf S}}
\def\mT {{\bf T}}
\def\mU {{\bf U}}
\def\mV {{\bf V}}
\def\mW {{\bf W}}
\def\mX {{\bf X}}
\def\mY {{\bf Y}}
\def\mZ {{\bf Z}}

\def\AUC{{\rm{AUC}}}
\def\SVI{{\rm{SVI}}}
\def\VI{{\rm{VI}}}
\def\ApproxMax{{\rm{ApproxMax}}}

\section{Proofs for Results in Sections \ref{Sec: The stationary points} and \ref{sec:smoothing}}\label{appendix:stationary-smoothing}

In this section, we provide the detailed proofs of results for stationarity of nonsmooth minimax optimization and randomized smoothing. 
In the remainder of the paper, we slightly generalized the notations 
\begin{align}\label{eq:general gradient-mapping-xy}
        G_x(\vx, \vy,\vg_{x},\eta_x) = \frac{\vx-\Pi_\mathcal{X}\left(\vx - \eta_x \vg_{x}\right) }{\eta_x}
        \quad\text{and}\quad
        G_y(\vx, \vy,\vg_{y},\eta_y) = \frac{\Pi_\mathcal{Y}\left(\vy + \eta_y \vg_{y}\right) - \vy}{\eta_y}
\end{align}
defined in equation (\ref{eq:gradient-mapping-xy}) to allow that the vectors $\vg_x$ and $\vg_y$ can be any $d_x$-dimensional vector and $d_y$-dimensional vector, respectively.

\subsection{The Proof of Proposition \ref{Lipshitz of Phi}}
\begin{proof}
This results can be achieved by Lemma 4.2 of \citet{lin2025two}.
We provide the proof in our settings for the completeness.
Recall that $\vy^*(\vx)=\argmax_{\vy\in\fY}f(\vx,\vy)$, which implies
\begin{align*}
& \Phi(\vx_1) - \Phi(\vx_2) 
= f(\vx_1, \vy^*(\vx_1)) - f(\vx_2, \vy^*(\vx_2)) \\
\leq & f(\vx_1, \vy^*(\vx_1)) - f(\vx_2, \vy^*(\vx_1)) 
\leq  L\norm{\vx_1 - \vx_2}
\end{align*} 
for all $\vx_1,\vx_2\in\fX_\delta$.
By changing the roles of $\vx_1$ and $\vx_2$, it also holds
\begin{align*}
\Phi(\vx_2) - \Phi(\vx_1) \leq  L\norm{\vx_1 - \vx_2}.
\end{align*}
Combining above results, we achieve
$|\Phi(\vx_1) - \Phi(\vx_2)|\leq L\norm{\vx_1 - \vx_2}$, which finishes the proof.
\end{proof}

\subsection{The Proof of Proposition \ref{prop: concave of f_delta}}

\begin{proof}
We focus on proving the result under Assumption \ref{asm:strongly-concave}.
For all $\delta>0$, $\vx,\vu\in\fX$, $\vy_1,\vy_2,\vv\in\fY$, and $\lambda\in[0,1]$,  Assumption \ref{asm:strongly-concave} implies it holds that
\begin{align*}
& f\big(\vx+\delta \vu,\; \lambda \vy_1 + (1-\lambda)\vy_2 + \delta \vv\big) \\
\ge & \lambda\, f(\vx+\delta \vu,\, \vy_1 + \delta \vv)
+ (1-\lambda)\, f(\vx+\delta \vu,\, \vy_2 + \delta \vv)
+ \frac{\mu\lambda(1-\lambda)}{2}\|\vy_1 - \vy_2\|^2.
\end{align*}
for all $\vu\in\BR^{d_x}$ and $\vv\in\BR^{d_y}$ such that $||\vu||\leq 1$ and $||\vv||\leq 1$.
We let $(\vu,\vv)$ be random variable uniformly distributed on the unit ball $\BB^{d}(\vzero,1)$, then taking the expectation with respect to $(\vu,\vv)$ on above inequality leads to 
\begin{align*}
& f_\delta(\vx,\, \lambda \vy_1 + (1-\lambda)\vy_2) 
\ge 
\lambda\, f_\delta(\vx, \vy_1)
+ (1-\lambda)\, f_\delta(\vx, \vy_2)
+ \frac{\mu\lambda(1-\lambda)}{2}\|\vy_1 - \vy_2\|^2,
\end{align*}
where we use definition of $f_\delta$ in definition \eqref{eq:f_delta}.
This implies the function $f_\delta(\vx,\vy)$ is $\mu$-strongly concave in $\vy$.  
Taking $\mu=0$ yields the result in the general concave case.
\end{proof}

\subsection{Proof of Lemma \ref{distance between y_star and y_star_delta}}
\begin{proof}
    We prove this lemma by contradiction. 
    Suppose there exists some $\vx\in\BR^{d_x}$ such that
    \begin{align}\label{eq:contradiction-ystar}
        \|\vy^*(\vx)-\vy_\delta^*(\vx)\|> 2\sqrt{\frac{\delta L}{\mu}}.    
    \end{align}     
    Since the function $f(\vx,\vy)$ is $\mu$-strongly concave in $\vy$, it holds
    \begin{align}\label{eq:diff-fd-elta}
    \begin{split}        
    f(\vx,\vy_\delta^*(\vx))&\le f(\vx,\vy^*(\vx))+\langle \vg_y, \vy_\delta^*(\vx)-\vy^*(\vx)\rangle-\frac{\mu}{2}\|\vy_\delta^*(\vx)-\vy^*(\vx)\|^2 \\
    &< f(\vx,\vy^*(\vx))+\langle \vg_y, \vy_\delta^*(\vx)-\vy^*(\vx)\rangle-2\delta L \\
    & \leq f(\vx,\vy^*(\vx))-2\delta L
    \end{split}
    \end{align}
    for some $\vg_y\in \partial_y f(\vx,\vy^*(\vx))$, where the second inequality is based on equation \eqref{eq:contradiction-ystar} and the last inequality is based on the optimality of $\vy^*(\vx)$.
    
    Consequently, we have
    \begin{align*}
      f_\delta(\vx,\vy_\delta^*(\vx))\leq f(\vx,\vy_\delta^*(\vx))+\delta L \overset{\eqref{eq:diff-fd-elta}}< f(\vx,\vy^*(\vx))-\delta L\le f_\delta(\vx,\vy^*(\vx)),  
    \end{align*}
    where the first and the last inequalities are based on Proposition \ref{prop:smoothing}(a).
    Thus, we achieve 
    \begin{align*}
        f_\delta(\vx,\vy^*(\vx))>f_\delta(\vx,\vy_\delta^*(\vx)),
    \end{align*}
    which contradicts the definition  $\vy_\delta^*(\vx)=\argmax_{\vy\in\fY} f_\delta(\vx,\vy)$.    
    Therefore, equation \eqref{eq:contradiction-ystar} does not hold and we conclude $\|\vy_\delta^*(\vx)-\vy^*(\vx)\|\leq 2\sqrt{\delta L/\mu}$ for all $\vx\in\BR^{d_x}$.    

\end{proof}

\subsection{Proof of Lemma \ref{distance between Phi Phi_delta}}
\begin{proof}    
For all given $\vx\in\BR^{d_x}$, $\vy\in\fY$, and  $(\vu,\vv)\sim {\rm Unif}(\BB^d(\vzero,1))$,
it holds
\begin{align}\label{eq:diff-f-delta-f}
    f(\vx+\delta\vu,\vy+\delta\vv) 
    \leq f(\vx,\vy)  + L(\norm{\delta\vu}^2+\norm{\delta\vv}^2)^{1/2}
    \leq f(\vx,\vy) + \delta L,
\end{align}
where the first inequality is based on Assumption \ref{asm:Lipschitz} and the last inequality is based on 
Proposition~\ref{prop:smoothing}(a) and the fact $\norm{\vu}^2+\norm{\vv}^2\leq 1$.
Taking the expectation with respect to $\vu$ and $\vv$ on equation~\eqref{eq:diff-f-delta-f}, we have
\begin{align}\label{eq:f_delta-f}
    f_\delta(\vx,\vy) = \BE_{(\vu,\vv)\sim {\rm Unif}(\BB^d(\vzero,1))}[f(\vx+\delta\vu,\vy+\delta\vv)] 
    \leq f(\vx,\vy) + \delta L.
\end{align}

Consequently, we have
\begin{align*}
    \Psi_\delta(\vx) = f_\delta(\vx,\vy_\delta^*(\vx))
    \overset{(\ref{eq:f_delta-f})}{\leq}  f(\vx,\vy_\delta^*(\vx)) + \delta L
     \overset{(\ref{eq:y-star})}{\leq} f(\vx,\vy^*(\vx)) + \delta L
     = \Phi(\vx) + \delta L
\end{align*}
for all $\vx\in\BR^{d_x}$.
Following the derivation of equation (\ref{eq:f_delta-f}), we can also achieve
\begin{align}\label{eq:f_delta-f-2}
    f(\vx,\vy)  \leq f_\delta(\vx,\vy) + \delta L,
\end{align}
which implies
\begin{align*}
    \Phi(\vx) = f(\vx,\vy^*(\vx))
    \overset{(\ref{eq:f_delta-f-2})}{\leq}  f_\delta(\vx,\vy^*(\vx)) + \delta L
     \overset{(\ref{eq:y-star})}{\leq} f_\delta(\vx,\vy_\delta^*(\vx)) + \delta L
     = \Psi_\delta(\vx) + \delta L.
\end{align*}
Therefore, we have $|\Phi(\vx)-\Psi_\delta(\vx)|\leq \delta L$ for all $\vx\in\BR^{d_x}$.
\end{proof}

\section{Technical Lemmas for Convergence Analysis}\label{appendix:lemmas}
Recall that Propositions \ref{prop:smoothing} and \ref{prop: concave of f_delta} implies the surrogate function $f_\delta(\vx,\vy)$ is $M_{f_\delta}$-smooth with $M_{f_\delta}=cd^{1/2}L\delta^{-1}$ and $\mu$-strongly concave in $\vy$ under Assumptions \ref{asm:Lipschitz} and \ref{asm:strongly-concave}.
Hence, we can apply Danskin's theorem \citep{bertsekas1971control} and Lemma A.6 of \citet{chen2024efficient} to achieve the following results.

\begin{lemma}[{\citet[Theorem A.22]{bertsekas1971control}}]\label{danskin}
Under Assumptions \ref{asm:Lipschitz}, \ref{asm:convex-closed} and \ref{asm:strongly-concave}, 
the mapping $\vy_\delta^*(\vx) = \argmax_{\vy\in\fY} f_\delta(\vx,\vy)$ is uniquely defined
and the function $\Psi_\delta(\vx) = f_\delta(\vx,\vy_\delta^*(\vx))$ is differentiable with 
\begin{align*}
    \nabla \Psi_\delta(\vx)=\nabla_x f_\delta(\vx,\vy_\delta^*(\vx)).
\end{align*}
\end{lemma}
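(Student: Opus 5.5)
The plan is to reduce Lemma~\ref{danskin} to the classical Danskin theorem applied to the smooth surrogate $f_\delta$. I will not redo Danskin's argument from scratch; I only need to check its hypotheses for $f_\delta$ on $\fX\times\fY$.

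\textbf{Step 1: hypotheses.} By Proposition~\ref{prop:smoothing}(b),(c), $f_\delta$ is $L$-Lipschitz and $M_{f_\delta}$-smooth. Hence $f_\delta$ is continuous jointly, and $\nabla_x f_\delta(\vx,\vy)$ exists and is continuous in $(\vx,\vy)$. By Proposition~\ref{prop: concave of f_delta}, $f_\delta(\vx,\cdot)$ is $\mu$-strongly concave on $\fY$. Assumption~\ref{asm:convex-closed} makes $\fY$ convex, closed and bounded, so compact. For fixed $\vx$, a continuous strongly concave function on a nonempty compact convex set attains its maximum, and strong concavity makes the maximizer unique. This gives uniqueness of $\vy_\delta^*(\vx)$.

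\textbf{Step 2: Danskin.} With $\fY$ compact, $f_\delta$ continuous, and $\nabla_x f_\delta$ continuous, \citet[Theorem A.22]{bertsekas1971control} gives the directional derivative
\begin{align*}
\Psi_\delta'(\vx;\vd)=\max_{\vy\in Y^*(\vx)}\langle\nabla_x f_\delta(\vx,\vy),\vd\rangle ,
\end{align*}
where $Y^*(\vx)$ is the set of maximizers. By Step 1 this set is the singleton $\{\vy_\delta^*(\vx)\}$. So the directional derivative is linear in $\vd$, namely $\Psi_\delta'(\vx;\vd)=\langle\nabla_x f_\delta(\vx,\vy_\delta^*(\vx)),\vd\rangle$.

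\textbf{Step 3: Fréchet differentiability.} Linearity of directional derivatives alone does not give differentiability, so I would supplement it with a direct argument, or simply cite the version of the theorem that already states differentiability when the maximizer is unique. The direct argument is two-sided. First,
\begin{align*}
\Psi_\delta(\vx+\vh)-\Psi_\delta(\vx)\ge f_\delta(\vx+\vh,\vy_\delta^*(\vx))-f_\delta(\vx,\vy_\delta^*(\vx))\ge\langle\nabla_x f_\delta(\vx,\vy_\delta^*(\vx)),\vh\rangle-\tfrac{M_{f_\delta}}{2}\|\vh\|^2 .
\end{align*}
Second, writing $\vy'=\vy_\delta^*(\vx+\vh)$,
\begin{align*}
\Psi_\delta(\vx+\vh)-\Psi_\delta(\vx)\le f_\delta(\vx+\vh,\vy')-f_\delta(\vx,\vy')\le\langle\nabla_x f_\delta(\vx,\vy'),\vh\rangle+\tfrac{M_{f_\delta}}{2}\|\vh\|^2 .
\end{align*}

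\textbf{Step 4: continuity of the maximizer (main obstacle).} To finish the upper bound I need $\vy_\delta^*(\vx+\vh)\to\vy_\delta^*(\vx)$ as $\vh\to 0$. I would prove this by the standard strong-concavity estimate. Combining the first-order optimality conditions of $\vy_\delta^*(\vx)$ and $\vy_\delta^*(\vx+\vh)$ over $\fY$ with $\mu$-strong concavity gives
\begin{align*}
\mu\|\vy_\delta^*(\vx+\vh)-\vy_\delta^*(\vx)\|\le M_{f_\delta}\|\vh\| .
\end{align*}
Then $M_{f_\delta}$-Lipschitzness of $\nabla_x f_\delta$ in $\vy$ gives
\begin{align*}
\langle\nabla_x f_\delta(\vx,\vy'),\vh\rangle=\langle\nabla_x f_\delta(\vx,\vy_\delta^*(\vx)),\vh\rangle+O(\|\vh\|^2).
\end{align*}
Together with Step 3, both bounds pinch, so $\Psi_\delta$ is differentiable with $\nabla\Psi_\delta(\vx)=\nabla_x f_\delta(\vx,\vy_\delta^*(\vx))$.
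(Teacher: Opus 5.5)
Your proposal is correct. Steps~1--2 are what the paper does. Its only justification is to observe, via Propositions~\ref{prop:smoothing} and~\ref{prop: concave of f_delta}, that $f_\delta$ is $M_{f_\delta}$-smooth and $\mu$-strongly concave in $\vy$, and then to cite Danskin's theorem.

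Steps~3--4 are a different, self-contained route, and they go through. Combine the first-order condition for $\vy_\delta^*(\vx)$ with the one for any maximizer $\vy'$ of $f_\delta(\vx+\vh,\cdot)$ over $\fY$. Use the $M_{f_\delta}$-Lipschitz dependence of $\nabla_y f_\delta$ on $\vx$, and strong concavity of $f_\delta(\vx,\cdot)$ at the base point only. This gives $\mu\|\vy'-\vy_\delta^*(\vx)\|\le M_{f_\delta}\|\vh\|$, and your sandwich then yields
\begin{align*}
\big|\Psi_\delta(\vx+\vh)-\Psi_\delta(\vx)-\langle\nabla_x f_\delta(\vx,\vy_\delta^*(\vx)),\vh\rangle\big|\le\Big(\frac{M_{f_\delta}}{2}+\frac{M_{f_\delta}^2}{\mu}\Big)\|\vh\|^2 .
\end{align*}

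This gives more than the citation does: a quantitative first-order expansion, close in spirit to the smoothness of $\Psi_\delta$ in Proposition~\ref{prop:smooth-Psi-delta}. It also does not need the maximizer at $\vx+\vh$ to be unique. That matters near the boundary of $\fX$, where Proposition~\ref{prop: concave of f_delta} need not cover $\vx+\vh$. So you should take $\vy'$ to be an arbitrary maximizer rather than writing $\vy_\delta^*(\vx+\vh)$.

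Once Steps~3--4 are in place, Step~2 is redundant. If you prefer to keep only the citation route, the gap you flag in Step~3 closes at once. $\Psi_\delta$ is $L$-Lipschitz as a maximum of $L$-Lipschitz functions. For a Lipschitz function on $\BR^{d_x}$, a directional derivative that is linear in the direction already gives Fréchet differentiability. The reason is that the difference quotients are equi-Lipschitz in the direction, so they converge uniformly on the unit sphere.
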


\begin{lemma}\label{lemma: A1}
Under the setting of Lemma \ref{danskin}, it holds 
\begin{align*}
\| \nabla \Psi_\delta(\vx) - \nabla_x f_\delta(\vx, \vy) \| \leq \frac{2M_{f_\delta}}{\mu} \| G_y(\vx, \vy,\nabla_y f_\delta(\vx,\vy),\eta) \|,   
\end{align*}
for all $\vx\in\BR^{d_x}$, $\vy\in\fY$, and $\eta\in(0, 1/M_{f_\delta}]$.
\end{lemma}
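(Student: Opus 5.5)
Write $\vy^* = \vy_\delta^*(\vx)$, and $\vy^+ = \Pi_\fY(\vy+\eta\nabla_y f_\delta(\vx,\vy))$, so that $G_y \triangleq G_y(\vx,\vy,\nabla_y f_\delta(\vx,\vy),\eta) = (\vy^+-\vy)/\eta$. By Lemma~\ref{danskin}, $\nabla\Psi_\delta(\vx)=\nabla_x f_\delta(\vx,\vy^*)$. Since $f_\delta$ is $M_{f_\delta}$-smooth (Proposition~\ref{prop:smoothing}(c)),
\[
\|\nabla\Psi_\delta(\vx)-\nabla_x f_\delta(\vx,\vy)\|\le M_{f_\delta}\|\vy^*-\vy\|.
\]
The lemma therefore reduces to the error bound
\[
\|\vy-\vy^*\|\le \frac{2}{\mu}\|G_y\|,
\]
which is the standard error bound for projected gradient ascent on a smooth strongly concave function.

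To prove this error bound, I would use the projection characterization of $\vy^+$: for all $\vz\in\fY$,
\[
\langle \vy+\eta\nabla_y f_\delta(\vx,\vy)-\vy^+,\ \vz-\vy^+\rangle\le 0 .
\]
Taking $\vz=\vy^*$ gives
\[
\langle \nabla_y f_\delta(\vx,\vy),\ \vy^*-\vy^+\rangle\le \langle G_y,\ \vy^*-\vy^+\rangle .
\]
Optimality of $\vy^*$ gives $\langle \nabla_y f_\delta(\vx,\vy^*),\ \vy^+-\vy^*\rangle\le 0$. Adding the two inequalities yields
\[
\langle \nabla_y f_\delta(\vx,\vy)-\nabla_y f_\delta(\vx,\vy^*),\ \vy^*-\vy^+\rangle\le \langle G_y,\ \vy^*-\vy^+\rangle .
\]
Next split $\vy^*-\vy^+=(\vy^*-\vy)+(\vy-\vy^+)$. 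Strong monotonicity of $-\nabla_y f_\delta(\vx,\cdot)$ (Proposition~\ref{prop: concave of f_delta}) gives
\[
\langle \nabla_y f_\delta(\vx,\vy)-\nabla_y f_\delta(\vx,\vy^*),\ \vy^*-\vy\rangle\ge\mu\|\vy-\vy^*\|^2 .
\]
Smoothness gives
\[
\bigl|\langle \nabla_y f_\delta(\vx,\vy)-\nabla_y f_\delta(\vx,\vy^*),\ \vy-\vy^+\rangle\bigr|\le M_{f_\delta}\eta\|\vy-\vy^*\|\|G_y\|,
\]
using $\vy-\vy^+=-\eta G_y$. Combining these with Cauchy--Schwarz on the right-hand side gives
\[
\mu\|\vy-\vy^*\|^2\le (1+M_{f_\delta}\eta)\|G_y\|\|\vy-\vy^*\|+\eta\|G_y\|^2 .
\]

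The main obstacle is the leftover quadratic term $\eta\|G_y\|^2$, which prevents a clean linear bound. I would handle it by a case split. If $\|\vy-\vy^*\|\le\eta\|G_y\|=\|\vy-\vy^+\|$, then $\|\vy-\vy^*\|\le \eta\|G_y\|\le\|G_y\|/M_{f_\delta}\le \|G_y\|/\mu$, using $\eta\le 1/M_{f_\delta}$ and $\mu\le M_{f_\delta}$. Otherwise $\eta\|G_y\|^2\le\|G_y\|\|\vy-\vy^*\|$, and dividing by $\|\vy-\vy^*\|$ gives $\mu\|\vy-\vy^*\|\le (2+M_{f_\delta}\eta)\|G_y\|\le 3\|G_y\|$. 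This yields the constant $3$ rather than $2$.

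To obtain exactly $2/\mu$, I would instead avoid the quadratic term. The sharper route is to test the projection inequality against $\vy^*$ in the form $\langle \eta\nabla_y f_\delta(\vx,\vy)-(\vy^+-\vy),\ \vy^*-\vy^+\rangle\le 0$ and keep the term $-\|\vy^+-\vy\|^2$ with its favourable sign. With $\eta\le 1/M_{f_\delta}$, the smoothness cross term can then be absorbed by Young's inequality, which gives $\|\vy^+-\vy^*\|\le(1/(\mu\eta)-1)\|\vy^+-\vy\|$ or a similar estimate. The triangle inequality then gives $\|\vy-\vy^*\|\le \frac{1}{\mu\eta}\|\vy^+-\vy\|=\|G_y\|/\mu$, up to the factor $2$ allowed.

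If these constant manipulations prove delicate, I would fall back on citing Lemma A.6 of \citet{chen2024efficient}, which the paper indicates states exactly this bound. Multiplying the error bound by $M_{f_\delta}$ then finishes the proof.
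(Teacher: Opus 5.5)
\textbf{Verdict: the approach is right, but the argument you complete proves the bound with constant $3$, not the stated $2$.}

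Your reduction is sound. Danskin plus $M_{f_\delta}$-smoothness reduces the lemma to $\|\vy-\vy^*\|\le \frac{2}{\mu}\|G_y\|$. Every inequality you derive up to $\mu\|\vy-\vy^*\|^2\le (1+M_{f_\delta}\eta)\|G_y\|\|\vy-\vy^*\|+\eta\|G_y\|^2$ is correct. The gap is that your case split only yields $\|\nabla\Psi_\delta(\vx)-\nabla_x f_\delta(\vx,\vy)\|\le \frac{3M_{f_\delta}}{\mu}\|G_y\|$. That is not the stated lemma, and the constant matters downstream: squared, it becomes the $4\kappa^2$, $8\kappa^2$, $12\kappa^2$ terms and fixes the choice of $\eta_x$ in Lemma~\ref{update}. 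Your ``sharper route'' is not a proof either. The estimate $\|\vy^+-\vy^*\|\le(1/(\mu\eta)-1)\|\vy^+-\vy\|$ is asserted rather than derived, the Young's-inequality absorption is never carried out, and ``or a similar estimate \dots up to the factor 2 allowed'' leaves the constant undetermined.

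The obstacle you identify is self-inflicted. It comes from applying Cauchy--Schwarz to $\langle G_y,\vy^*-\vy^+\rangle$ as a whole. Split the right-hand side the same way you split the left, using $\vy-\vy^+=-\eta G_y$:
\[
\langle G_y,\vy^*-\vy^+\rangle=\langle G_y,\vy^*-\vy\rangle+\langle G_y,\vy-\vy^+\rangle=\langle G_y,\vy^*-\vy\rangle-\eta\|G_y\|^2\le \|G_y\|\,\|\vy-\vy^*\|.
\]
The quadratic term has the favourable sign and can be dropped. Your lower bound on the left-hand side then gives $\mu\|\vy-\vy^*\|^2\le(1+M_{f_\delta}\eta)\|G_y\|\|\vy-\vy^*\|$. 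Hence $\|\vy-\vy^*\|\le\frac{1+M_{f_\delta}\eta}{\mu}\|G_y\|\le\frac{2}{\mu}\|G_y\|$ whenever $\eta\le 1/M_{f_\delta}$. That is exactly the stated constant, and exactly where the hypothesis on $\eta$ enters; no case split or second route is needed.

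For comparison, the paper gives no argument for this lemma. It only invokes Danskin's theorem and Lemma A.6 of \citet{chen2024efficient}, which is your fallback. With the one-line fix above, your proposal becomes a self-contained proof of what the paper merely cites. It uses only the projection inequality, first-order optimality of $\vy^*$ over $\fY$, strong monotonicity of $-\nabla_y f_\delta(\vx,\cdot)$ on $\fY$, and Lipschitz continuity of $\nabla f_\delta$.
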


Following the generalized definitions for mappings $G_x$ and $G_y$ shown in equation (\ref{eq:general gradient-mapping-xy}), we have following results.

\begin{lemma}\label{lemma: g&G_1}
For all $\vx,\vg_{x,1},\vg_{x,2}\in \mathbb{R}^{d_x}$, $\vy_1,\vy_2\in\BR^{d_y}$, and $\eta_x>0$, it holds
\begin{align*}
&\| G_x(\vx,\vy_1, \vg_{x,1}, \eta_x) - G_x(\vx,\vy_2, \vg_{x,2}, \eta_x) \| \leq\|\vg_{x,1}-\vg_{x,2}\|
\end{align*}
Additionally, for all $\vx_1,\vx_2\in\BR^{d_x}$, $\vy,\vg_{y,1},\vg_{y,2}\in \mathbb{R}^{d_y}$, and $\eta_y>0$, it holds
\begin{align*}
 \| G_y(\vx_1,\vy, \vg_{y,1}, \eta_y) - G_y(\vx_2,\vy, \vg_{y,2}, \eta_y) \| \leq \|\vg_{y,1}-\vg_{y,2}\|.
\end{align*}
\end{lemma}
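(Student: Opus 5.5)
The plan is to reduce both inequalities to the nonexpansiveness of the Euclidean projection onto a closed convex set. The first point to note is that $G_x(\vx,\vy,\vg_x,\eta_x)$ does not depend on $\vy$, and $G_y(\vx,\vy,\vg_y,\eta_y)$ does not depend on $\vx$. So the differing arguments $\vy_1,\vy_2$ (respectively $\vx_1,\vx_2$) play no role, and only the dependence on $\vg$ has to be controlled.

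For the first inequality, subtract the two expressions from (\ref{eq:general gradient-mapping-xy}). The terms $\vx/\eta_x$ cancel, which gives
\begin{align*}
G_x(\vx,\vy_1,\vg_{x,1},\eta_x)-G_x(\vx,\vy_2,\vg_{x,2},\eta_x)=\frac{\Pi_\fX(\vx-\eta_x\vg_{x,2})-\Pi_\fX(\vx-\eta_x\vg_{x,1})}{\eta_x}.
\end{align*}
Because $\fX$ is convex and closed (Assumption \ref{asm:convex-closed}), $\Pi_\fX$ is $1$-Lipschitz: $\|\Pi_\fX(\va)-\Pi_\fX(\vb)\|\le\|\va-\vb\|$. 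If needed, this follows from adding the two variational inequalities $\langle \va-\Pi_\fX(\va),\vz-\Pi_\fX(\va)\rangle\le0$, taken at $\vz=\Pi_\fX(\vb)$ and symmetrically, and then applying Cauchy--Schwarz. Applying this bound, the norm of the difference is at most $\|\eta_x(\vg_{x,1}-\vg_{x,2})\|/\eta_x=\|\vg_{x,1}-\vg_{x,2}\|$.

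The second inequality follows in the same way. The $\vy/\eta_y$ terms cancel, leaving $(\Pi_\fY(\vy+\eta_y\vg_{y,1})-\Pi_\fY(\vy+\eta_y\vg_{y,2}))/\eta_y$, and nonexpansiveness of $\Pi_\fY$ gives the bound $\|\vg_{y,1}-\vg_{y,2}\|$.

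I do not expect a real obstacle. The only ingredient is the projection nonexpansiveness fact, which is standard.
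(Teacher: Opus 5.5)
Your proposal is correct and matches the paper's proof. Both cancel the $\vx/\eta_x$ (resp.\ $\vy/\eta_y$) terms and apply nonexpansiveness of the projection onto the closed convex set $\fX$ (resp.\ $\fY$), which gives the bound $\|\eta_x(\vg_{x,1}-\vg_{x,2})\|/\eta_x=\|\vg_{x,1}-\vg_{x,2}\|$; your variational-inequality derivation of nonexpansiveness is a correct optional supplement.
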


\begin{proof}
Following definition  (\ref{eq:general gradient-mapping-xy}), it holds
\[
\begin{aligned}
& \bigl\|G_x(\vx,\vy_1,\vg_{x,1},\eta_x)-G_x(\vx,\vy_2,\vg_{x,2},\eta_x)\bigr\|
= \frac{1}{\eta_x}\bigl\|\Pi_{\mathcal{X}}(\vx-\eta_x\vg_{x,1})-\Pi_{\mathcal{X}}(\vx-\eta_x\vg_{x,2})\bigr\|  \\
&\le \frac{1}{\eta_x}\big\|\bigl(\vx-\eta_x\vg_{x,1}\bigr)-\bigl(\vx-\eta_x \vg_{x,2}\bigr)\big\| 
= \|\vg_{x,1}-\vg_{x,2}\|.
\end{aligned}
\]
where the inequality is based on the expansiveness of the projection operator. Similarly, we can also achieve the desired result for $G_y$.
\end{proof}

\begin{lemma}\label{lemma: g&G_2}
For all $\vx\in\fX$, $\vy\in\fY$, $\vg_x\in\BR^{d_x}$, $\vg_y\in\BR^{d_y}$, and $\eta>0$, it holds
\[
\langle \vg_x, G_x(\vx,\vy, \vg_x, \eta) \rangle \geq \|G_x(\vx,\vy, \vg_x, \eta)\|^2
\quad\text{and}\quad
\langle \vg_y, G_y(\vx,\vy, \vg_y, \eta) \rangle \geq \|G_y(\vx,\vy, \vg_y, \eta)\|^2.
\]
Applying Cauchy--Schwarz inequality, we also achieve
\[
\|\vg_x\| \geq \|G_x(\vx,\vy, \vg_x, \eta)\|
\quad\text{and}\quad
\|\vg_y\| \geq \|G_y(\vx,\vy, \vg_y, \eta)\|.
\]
\end{lemma}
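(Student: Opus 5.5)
The plan is to use the variational characterization of the Euclidean projection onto a closed convex set. For $\vx\in\fX$, set $\vx_+ = \Pi_\fX(\vx-\eta\vg_x)$, so that $G_x(\vx,\vy,\vg_x,\eta) = (\vx-\vx_+)/\eta$. The projection optimality condition says
\begin{align*}
\langle (\vx-\eta\vg_x) - \vx_+,\; \vz - \vx_+\rangle \le 0 \quad\text{for all } \vz\in\fX.
\end{align*}
Since $\vx\in\fX$, we may take $\vz=\vx$. This gives $\langle \vx-\vx_+ - \eta\vg_x,\, \vx-\vx_+\rangle\le 0$, that is, $\norm{\vx-\vx_+}^2 \le \eta\langle \vg_x, \vx-\vx_+\rangle$. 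Dividing by $\eta^2>0$ yields $\norm{G_x}^2\le \langle \vg_x, G_x\rangle$, which is the first claim. This is the only step where feasibility $\vx\in\fX$ is used, and it is essential.

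For the $\vy$ part we do the analogous thing with the sign flipped. Let $\vy_+=\Pi_\fY(\vy+\eta\vg_y)$, so that $G_y=(\vy_+-\vy)/\eta$. The optimality condition with test point $\vz=\vy\in\fY$ reads $\langle \vy+\eta\vg_y-\vy_+,\, \vy-\vy_+\rangle\le 0$. Rearranging gives $\norm{\vy_+-\vy}^2\le \eta\langle \vg_y, \vy_+-\vy\rangle$, and dividing by $\eta^2$ gives $\norm{G_y}^2\le\langle \vg_y, G_y\rangle$. Alternatively, one can note that $G_y(\vx,\vy,\vg_y,\eta)$ equals $-\big(\vy-\Pi_\fY(\vy-\eta(-\vg_y))\big)/\eta$, which reduces the claim to the $\vx$ case applied to $-\vg_y$ with both sides negated.

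The norm inequalities then follow from Cauchy--Schwarz: $\norm{G_x}^2\le\langle\vg_x,G_x\rangle\le\norm{\vg_x}\norm{G_x}$. If $G_x=\vzero$ the bound $\norm{\vg_x}\ge 0$ is trivial; otherwise we divide by $\norm{G_x}$. The same argument handles $G_y$. There is no real obstacle here. The only care needed is getting the sign of the projection inequality right for the ascent-type mapping $G_y$, and the closedness and convexity of $\fX$ and $\fY$ from Assumption \ref{asm:convex-closed}, which justify the variational inequality.
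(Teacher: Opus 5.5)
Your proof is correct and follows the same route as the paper: apply the projection variational inequality with the feasible point itself as the test point, divide by $\eta^2$, and finish with Cauchy--Schwarz. Your explicit sign check for the ascent-type mapping $G_y$, which the paper covers only with ``following the above derivation'', and your handling of the case $G_x=\vzero$ are small additions the paper omits.
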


\begin{proof}
We denote $\vx^+ := {\Pi}_\fX\left(\vx - \eta \vg_x\right)$. 
The convexity and closedness of set $\fX$ implies for all $\vu \in \fX$, it holds
\begin{equation*}
    \left\langle \vx^+ - (\vx - \eta \vg_x), \vu - \vx^+ \right\rangle \geq 0
\end{equation*}
Dividing above equation by $\eta^2$ and taking $\vu = \vx$, we achieve
\begin{equation*}
    \frac{1}{\eta^2}\left\langle \vx^+ - (\vx - \eta \vg_x), \vx - \vx^+ \right\rangle \geq 0.
\end{equation*}
By rearranging above inequality, the definition of $G_x(\vx,\vy, \vg_x, \eta)$ leads to
\begin{equation*}
\left\langle \vg_x, G_x(\vx,\vy, \vg_x, \eta) \right\rangle  
= \left\langle \vg_x, \frac{\vx - \vx^+}{\eta} \right\rangle  
\geq \frac{1}{\eta^2}\left\langle \vx- \vx^+  , \vx - \vx^+\right\rangle  
= \|G_x(\vx,\vy, \vg_x, \eta)\|^2.
\end{equation*}
Following the above derivation, we also achieve
\[
\langle \vg_y, G_y(\vx,\vy, \vg_y, \eta) \rangle \geq \|G_y(\vx,\vy, \vg_y, \eta)\|^2.
\]
Applying Cauchy--Schwarz inequality on above two results, we finish the proof.
\end{proof}

\begin{lemma}\label{Function value optimality controls the gradient mapping}
Under Assumptions \ref{asm:Lipschitz}, \ref{asm:convex-closed}, and \ref{asm:concave}, it holds
\[
\|G_y(\vx,\vy,\nabla_y f_\delta(\vx,\vy),\eta)\|^2
\;\le\;
\frac{2}{\eta}\bigl(f_\delta(\vx,\vy_\delta^*(\vx))-f_\delta(\vx,\vy)\bigr).
\]
for all $\vx\in\BR^{d_x}$, $\vy\in\mathcal Y$, and $\eta\in(0,1/M_{f_\delta}]$, where $\vy_\delta^*(\vx)\in\argmax_{\vy\in\fY} f_\delta(\vx,\vy)$ and $M_{f_\delta}>0$ follows Proposition \ref{prop:smoothing}.
\end{lemma}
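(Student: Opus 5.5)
The plan is to use the standard sufficient-increase property of one projected gradient ascent step on the smooth concave function $h(\cdot)=f_\delta(\vx,\cdot)$. Fix $\vx$ and $\vy\in\fY$. By Propositions \ref{prop:smoothing}(c) and \ref{prop: concave of f_delta}, $h$ is concave on $\fY$ with $M_{f_\delta}$-Lipschitz gradient. Define the ascent step
\begin{align*}
\vy^+=\Pi_\fY(\vy+\eta\nabla_y f_\delta(\vx,\vy))=\vy+\eta G_y,
\qquad G_y\triangleq G_y(\vx,\vy,\nabla_y f_\delta(\vx,\vy),\eta).
\end{align*}
Note that $\vy^+\in\fY$. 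By the optimality of $\vy_\delta^*(\vx)$ we have $h(\vy^+)\le f_\delta(\vx,\vy_\delta^*(\vx))$. It therefore suffices to show
\begin{align*}
h(\vy^+)-h(\vy)\ge \frac{\eta}{2}\|G_y\|^2 .
\end{align*}

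To prove this, start from the descent lemma for the $M_{f_\delta}$-smooth function $h$:
\begin{align*}
h(\vy^+)\ge h(\vy)+\langle\nabla_y f_\delta(\vx,\vy),\vy^+-\vy\rangle-\frac{M_{f_\delta}}{2}\|\vy^+-\vy\|^2
= h(\vy)+\eta\langle\nabla_y f_\delta(\vx,\vy),G_y\rangle-\frac{M_{f_\delta}\eta^2}{2}\|G_y\|^2 .
\end{align*}
Lemma \ref{lemma: g&G_2}, applied with $\vg_y=\nabla_y f_\delta(\vx,\vy)$, gives $\langle\nabla_y f_\delta(\vx,\vy),G_y\rangle\ge\|G_y\|^2$. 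Substituting this bound yields
\begin{align*}
h(\vy^+)-h(\vy)\ge \eta\Big(1-\frac{M_{f_\delta}\eta}{2}\Big)\|G_y\|^2\ge\frac{\eta}{2}\|G_y\|^2,
\end{align*}
where the last inequality uses $\eta\le 1/M_{f_\delta}$. Combining this with $h(\vy^+)\le f_\delta(\vx,\vy_\delta^*(\vx))$ and rearranging gives the claim.

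The only delicate point is a domain issue. We must make sure the smoothness of $f_\delta$ in Proposition \ref{prop:smoothing}(c) holds along the segment between $\vy$ and $\vy^+$. Both points lie in the convex set $\fY$, so the whole segment stays in $\fY$, where the smoothness applies. Concavity is not even needed for this inequality. It only matters for the existence of a maximizer $\vy_\delta^*(\vx)$, and that is guaranteed because $f_\delta(\vx,\cdot)$ is continuous on the compact set $\fY$.
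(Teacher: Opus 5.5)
Your proof is correct and matches the paper's argument step for step. The paper also sets $\vy^+=\Pi_\fY(\vy+\eta\nabla_y f_\delta(\vx,\vy))$ and combines the smoothness lower bound with the projection inequality $\eta\langle\nabla_y f_\delta(\vx,\vy),\vy^+-\vy\rangle\ge\|\vy^+-\vy\|^2$, which it re-derives inline instead of citing the earlier lemma. It then uses $1/\eta-M_{f_\delta}/2\ge 1/(2\eta)$ and concludes via $f_\delta(\vx,\vy^+)\le f_\delta(\vx,\vy_\delta^*(\vx))$. Your remarks that the segment $[\vy,\vy^+]$ stays in $\fY$, and that concavity is needed only for the existence of a maximizer, are accurate.
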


\begin{proof}
We denote $\vy^+ := \Pi_{\mathcal Y}\bigl(\vy+\eta\nabla_y f_\delta(\vx,\vy)\bigr)$.
According to Proposition \ref{prop:smoothing}(c), it holds
\begin{align}\label{eq:smooth-proj}
f_\delta(\vx,\vy^+)\ge f_\delta(\vx,\vy)+\langle \nabla f_\delta(\vx,\vy),\vy^+-\vy\rangle-\frac{M_{f_\delta}}{2}\|\vy^+-\vy\|^2.    
\end{align}
The convexity and closedness of set $\fY$ implies for all $\vv\in\fY$, it holds
\begin{equation*}
\bigl\langle \vy^+ -(\vy+\eta\nabla_y f_\delta(\vx,\vy)),\vv-\vy^+\bigr\rangle \ge 0,
\end{equation*}
Taking $\vu=\vy$ for above inequality, we achieve 
\begin{align}\label{eq: projection ineq}
\eta\langle \nabla_y f_\delta(\vx,\vy),\,\vy^+-\vy\rangle \ge \|\vy-\vy^+\|^2.
\end{align}
Combining equations \eqref{eq:smooth-proj} and \eqref{eq: projection ineq}, we achieve
\begin{align}\label{eq:smooth-proj2}
\begin{split}    
& f_\delta(\vx,\vy^+)-f_\delta(\vx,\vy)\ge \langle\nabla_y f_\delta(\vx,\vy),\vy^+-\vy\rangle-\frac{M_{f_\delta}}{2}\|\vy^+-\vy\|^2 \\
\ge & \left(\frac{1}{\eta}-\frac{M_{f_\delta}}{2}\right)\|\vy-\vy^+\|^2
\ge \frac{1}{2\eta}\|\vy-\vy^+\|^2 
= \frac{\eta}{2}\|G_y(\vx,\vy,\nabla_y f_\delta(\vx,\vy),\eta)\|^2,
\end{split}
\end{align}
where the second inequality is based on the assumption $\eta<1/M_{f_\delta}$ and the last step is based on the definition of $G_y$.
According to the fact $f_\delta(\vx,\vy_\delta^*(\vx))\ge f_\delta(\vx,\vy^+)$, it holds
\begin{align*}
f_\delta(\vx,\vy_\delta^*(\vx))-f_\delta(\vx,\vy) \ge f_\delta(\vx,\vy^+)-f_\delta(\vx,\vy) \overset{\eqref{eq:smooth-proj2}}\ge \frac{\eta}{2}\|G_y(\vx,\vy,\nabla_y f_\delta(\vx,\vy),\eta)\|^2,   
\end{align*}
which completes the proof.
\end{proof}

\section{Proofs for Results in Section \ref{sec:convergence-PGFDA}}\label{Proof of Section 4}

This section provides detailed proofs for finding approximate Goldstein saddle stationary point (GSSPs) of the objective $f(\vx,\vy)$.

\subsection{Proof of Lemma \ref{lemma: gradient mapping between Psi_delta and f_delta}}
\begin{proof}
For all $\vx\in\fX$ and $\vy\in\fY$, the triangle inequality implies
\begin{align}\label{eq:triangle ineq for gradient mapping}
\begin{split}
    &\norm{G_x(\vx,\vy,\nabla_x f_\delta(\vx,\vy),\eta_x)}\\
\leq&  \norm{G(\vx,\nabla_x \Psi_\delta(\vx),\eta_x)} + \norm{G(\vx,\nabla_x \Psi_\delta(\vx),\eta_x)
-G_x(\vx,\vy,\nabla_x f_\delta(\vx,\vy),\eta_x)}.
\end{split}
\end{align}
Note that the term $G(\vx,\nabla_x \Psi_\delta(\vx),\eta_x)$ can be written as
\begin{align}\label{eq:GxPsi-delta}
\begin{split}    
   G(\vx,\nabla_x \Psi_\delta(\vx),\eta_x) 
= & \frac{\vx-\Pi_\mathcal{X}(\vx-\eta_x \nabla \Psi_\delta(\vx))}{\eta_x} \\
= & \frac{\vx-\Pi_\fX(\vx-\eta_x \nabla_x f_\delta(\vx,\vy_\delta^*(\vx)))}{\eta_x} \\
= & G_x(\vx,\vy,\nabla_x f_\delta(\vx,\vy_\delta^*(\vx)),\eta_x),
\end{split}
\end{align}
where second step is based on Lemma \ref{danskin}. We then bound the second term in the last line of equation~\eqref{eq:triangle ineq for gradient mapping} as follows
\begin{align}\label{eq:triangle-term-2}
\begin{split}    
   & \norm{G(\vx,\nabla_x \Psi_\delta(\vx),\eta_x)
-G_x(\vx,\vy,\nabla_x f_\delta(\vx,\vy),\eta_x)} \\
\overset{\eqref{eq:GxPsi-delta}}= & \norm{G_x(\vx,\vy,\nabla_x f_\delta(\vx, \vy_\delta^*(\vx)),\eta_x)
-G_x(\vx,\vy,\nabla_x f_\delta(\vx,\vy),\eta_x)}    \\
~\le~& \norm{\nabla_x f_\delta(\vx, \vy_\delta^*(\vx)) - \nabla_x f_\delta(\vx,\vy)} 
\leq \frac{2M_{f_\delta}}{\mu}\| G_y(\vx, \vy,\nabla_y f_\delta(\vx,\vy),\tilde\eta_y) \|,
\end{split}
\end{align}
where the first inequality is based on Lemma \ref{lemma: g&G_1} and the second inequality is based on Lemma \ref{lemma: A1}.
Combining equations \eqref{eq:triangle ineq for gradient mapping} and  \eqref{eq:triangle-term-2}, we finish the proof.
\end{proof}

\subsection{Proof of Lemma \ref{thm:zeroth-order minimization}}

\begin{proof}
We denote 
$\tilde{\vy}_{k+1}=\vy_k-\eta_k\vv_k$. For all $\hat\vy\in\fY$, it holds
\begin{align}\label{ineq: v_k, y_k-y*}
\begin{split}
& \E[\langle \vv_k, \vy_k - \hat\vy \rangle]  
=  \E\left[\frac{1}{\eta_k} \langle \vy_k - \tilde{\vy}_{k+1}, \vy_k -  \hat\vy \rangle \right]\\
=& \E\left[\frac{1}{2\eta_k} \Big( \|\vy_k - \tilde{\vy}_{k+1}\|^2 + \|\vy_k -  \hat\vy\|^2 - \|\tilde{\vy}_{k+1} -  \hat\vy\|^2 \Big)\right]\\
=& \E\left[\frac{1}{2\eta_k} \Big( \eta_k^2 \|\vv_k\|^2 + \|\vy_k -  \hat\vy\|^2 - \|\tilde{\vy}_{k+1} -  \hat\vy\|^2 \Big)\right] \\
\le& \E\left[\frac{1}{2\eta_k} \Big(  16\sqrt{2\pi }d_y\eta_k^2L^2 + \|\vy_k -  \hat\vy\|^2 - \|\tilde{\vy}_{k+1} -  \hat\vy\|^2 \Big)\right] \\
\le& \E\left[\frac{1}{2\eta_k} \Big( \|\vy_k - \hat\vy\|^2 - \|\vy_{k+1} -  \hat\vy\|^2 \Big) + 8\sqrt{2\pi }d_y\eta_k L^2\right],
\end{split}
\end{align}
where first inequality is based on Lemma D.1 of \citet{lin2022gradient} and 
the second inequality is based on the fact that
$\|\vy_{k+1}-\hat\vy\|\le\|\tilde\vy_{k+1}-\hat\vy\|$
due to the convexity and the closedness of the feasible set $\fY$. 

According to Proposition \ref{prop: concave of f_delta}, the function $h_\nu$ is $\mu$-strongly convex on $\fY$. This implies
\begin{align}\label{ineq: h(y_k)-h(y*)}
\begin{split}    
   h_\nu(\vy_k) - h_\nu(\hat\vy) 
 \le &\langle \nabla h_\nu(\vy_k), \vy_k - \hat\vy \rangle - \frac{\mu}{2}\|\vy_k - \hat\vy\|^2 \\
 = &  \E\left[ \langle \vv_k, \vy_k - \hat\vy \rangle - \frac{\mu}{2}\|\vy_k - \hat\vy\|^2 \right],
\end{split}
\end{align}
where the equality is based on $\BE[\vv_k]=\nabla h_\nu(\vy_k)$ from Lemma D.1 of \citet{lin2022gradient}.

Combining equations (\ref{ineq: v_k, y_k-y*})
and (\ref{ineq: h(y_k)-h(y*)}), for given $\vy_k$, it holds 
\begin{align*}
& h_\nu(\vy_k) - h_\nu(\hat\vy) \\
\le& \E\left[\frac{1}{2\eta_k} \Big( \|\vy_k - \hat\vy\|^2 - \|\vy_{k+1} - \hat\vy\|^2 \Big)
    + 8\sqrt{2\pi}d\eta_k L^2 - \frac{\mu}{2}\|\vy_k - \hat\vy\|^2\right] \\
=& \E\left[\frac{\mu(k+1)}{4} \Big( \|\vy_k - \hat\vy\|^2 - \|\vy_{k+1} - \hat\vy\|^2 \Big)
    + \frac{16\sqrt{2\pi}d L^2}{\mu(k+1)} - \frac{\mu}{2}\|\vy_k - \hat\vy\|^2 \right]\\
=& \E\left[\frac{\mu(k-1)}{4}\|\vy_k - \hat\vy\|^2
    - \frac{\mu(k+1)}{4}\|\vy_{k+1} - \hat\vy\|^2
    + \frac{16\sqrt{2\pi}\, d_yL^2}{\mu(k+1)}\right],
\end{align*}
where the first equality holds by setting 
$\eta_k=2/(\mu(k+1))$. This leads to
\begin{align}\label{ineq: iterate of h_nu}
\begin{split}
&k\big(h_\nu(\vy_k) - h_\nu( \hat\vy)\big)\\
\le& \E\bigg[\frac{\mu(k-1)k}{4}\|\vy_k -  \hat\vy\|^2
   - \frac{\mu k (k+1)}{4}\|\vy_{k+1} -  \hat\vy\|^2
   + \frac{16\sqrt{2\pi}\, d_yL^2}{\mu}\bigg].
\end{split}
\end{align}
Summing the above inequality over $k = 0, \ldots, K-1$, and taking expectation of both sides over $\vy_1,\cdots,\vy_{K-1}$, we obtain
\[
\sum_{t=0}^{K-1} k\big(\E[h_\nu(\vy_k)] - h_\nu( \hat\vy)\big)
\le \frac{16\sqrt{2\pi}\, d_yL^2K}{\mu},
\]
which leads to
\begin{align}\label{eq:sum-hk}
\begin{split}    
\sum_{k=0}^{K-1} \frac{2k}{K(K-1)} \E[h_\nu(\vy_k)]
\le h_\nu( \hat\vy) + \frac{32\sqrt{2\pi}\, d_yL^2}{\mu(K-1)}.
\end{split}
\end{align}
Therefore, taking
\begin{align}\label{y_out and K}
\vy_{\text{out}}=\sum_{k=0}^{K-1}\frac{2k\vy_k}{K(K-1)}
\quad \text{and} \quad
K=\left\lceil\frac{64\sqrt{2\pi}d_yL^2}{\mu\tilde\epsilon}\right\rceil, 
\end{align}
{and taking $\hat{\vy}=\vy_\nu^*\triangleq\argmin_{\vy\in\fY} h_\nu(\vy)$, it holds}
\[
\E[h_\nu(\vy_{\text{out}})]\le\sum_{k=0}^{K-1} \frac{2k}{K(K-1)} \E[h_\nu(\vy_k)]
\le h_\nu( \vy_\nu^*) + \tilde\epsilon,
\]
where the first inequality is based on the convexity of $h_\nu$ and the last step is based on equation~\eqref{eq:sum-hk} and the setting of $K$.

Now we prove the result for the objective $h(\cdot)$.
By Proposition 2.2 of \citet{lin2022gradient}, it holds \(|h-h_\nu|\le\nu L\). 
{By taking $\hat{\vy}=\vy^*\triangleq\argmin_{\vy\in\fY} h(\vy)$, equation (\ref{ineq: iterate of h_nu}) implies}
\begin{align*}
\begin{split}
&k\big(h(\vy_k) - h( \vy^*)\big)
\le k\big(h_\nu(\vy_k) - h_\nu( \vy^*)\big)+2k\nu L\\
\le& \E\bigg[\frac{\mu(k-1)k}{4}\|\vy_k -  \vy^*\|^2
   - \frac{\mu k (k+1)}{4}\|\vy_{k+1} -  \vy^*\|^2
   + \frac{16\sqrt{2\pi}\, d_yL^2}{\mu}+2k\nu L\bigg].
\end{split}
\end{align*}
Summing the above inequality over $k = 0, \ldots, K-1$, and taking expectation of both sides, we obtain
\[
\sum_{t=0}^{K-1} k\big(\E[h(\vy_k)] - h( \vy^*)\big)
\le \frac{16\sqrt{2\pi}\, d_yL^2K}{\mu}
+K(K-1)\nu L,
\]
which leads to
\begin{align}\label{eq:sum-h}
\begin{split}  
\sum_{k=0}^{K-1} \frac{2k}{K(K-1)} \E[h(\vy_k)]
\le h( \vy^*) + \frac{32\sqrt{2\pi}\, d_yL^2}{\mu(K-1)} +2\nu L.
\end{split}
\end{align}
We follow the settings of $\vy_{\text{out}}$ and $K$ in equation (\ref{y_out and K}) and further let $\nu=\tilde\epsilon/(4L)$, which leads to
\[
\E[h(\vy_{\text{out}})]\le\sum_{k=0}^{K-1} \frac{2k}{K(K-1)} \E[h(\vy_k)]
\le h( \vy^*) + \tilde\epsilon.
\]
where the first inequality is based on the convexity of $h$ and the last step is based on equation~\eqref{eq:sum-h} and the setting of $K$ and $\nu$.
\end{proof}

\subsection{Proof of Lemma \ref{update}}\label{Proof of Proposition of f}

Following Propositions \ref{prop:smoothing} and \ref{prop:smooth-Psi-delta}, we denote the smoothness parameters of $f_\delta$ and $\Psi_\delta$ as
\begin{align}\label{Notation: smoothness}
    M_{f_\delta} =\frac{c \sqrt{d}L}{\delta}
    \qquad \text{and} \qquad 
    M_{\Psi_\delta}=(1+\kappa)M_{f_\delta}
\end{align}
for some constant $c>0$, where  $\kappa={M_{f_\delta}}/{\mu}$ is the condition number of $f_\delta$.

We also denote
\begin{align}\label{notation: simplify of the gradient mapping}
    \tilde{\vu}_t=G_x(\vx_t, \vy_t,\vu_t,\eta_x) = \frac{\vx_t-\vx_{t+1}}{\eta_x},
\quad
\tilde{\vv}_t=G_y(\vx_t, \vy_t,\vv_t,\eta_y) = \frac{\vy_{t+1} - \vy_t}{\eta_y}.
\end{align}
and
\begin{align}\label{notation: full z and g}
    \vz_t=
\begin{bmatrix}
    \vx_t\\
    \vy_t
\end{bmatrix},
\quad
\vg_t=
\begin{bmatrix}
    \vu_t\\
    \vv_t
\end{bmatrix}.
\end{align}
We then provide the following lemmas.

\begin{lemma}\label{lemma:C.1}
    Following the settings of Lemma \ref{update} and notations (\ref{Notation: smoothness})--(\ref{notation: full z and g}),  it holds
\begin{align}\label{eq:varphi}
\begin{split}
\E[\Psi_\delta(\vx_{t+1})]
\le &
\mathbb{E}  \bigg[\Psi_{\delta}(  \vx_t) -\frac{\eta_x}{8}\Vert G(\vx_t,\nabla\Psi_\delta(\vx_t),\eta_x) \Vert^2-\eta_x\langle \vu_t, \tilde{\vu}_t\rangle \\
& \quad + \left(\frac{\eta_x}{2}+\frac{M_{\Psi_\delta}  \eta_x^2 }{2}\right) \Vert   \tilde{\vu}_t\Vert^2
+\frac{3\eta_x}{2} \|\vu_t-\nabla_x f_\delta(\vx_t,\vy_t)\|^2 \\
& \quad + 12\kappa^2\eta_x\|\tilde{\vv}_t\|^2 + 12\kappa^2\eta_x\|\nabla_y f_\delta(\vx_t,\vy_t)-\vv_t\|^2\bigg].
\end{split}
\end{align}
\end{lemma}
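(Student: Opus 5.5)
We start from the $M_{\Psi_\delta}$-smoothness of $\Psi_\delta$ (Proposition~\ref{prop:smooth-Psi-delta}) applied to $\vx_{t+1}=\vx_t-\eta_x\tilde\vu_t$:
\begin{align*}
\Psi_\delta(\vx_{t+1})\le \Psi_\delta(\vx_t)-\eta_x\langle\nabla\Psi_\delta(\vx_t),\tilde\vu_t\rangle+\frac{M_{\Psi_\delta}\eta_x^2}{2}\|\tilde\vu_t\|^2.
\end{align*}
We then split the inner product as $-\eta_x\langle \vu_t,\tilde\vu_t\rangle+\eta_x\langle \vu_t-\nabla\Psi_\delta(\vx_t),\tilde\vu_t\rangle$. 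The first piece is kept exactly as it appears in \eqref{eq:varphi}. Young's inequality on the second piece gives $\frac{\eta_x}{2}\|\tilde\vu_t\|^2+\frac{\eta_x}{2}\|\vu_t-\nabla\Psi_\delta(\vx_t)\|^2$, which explains the coefficient $\frac{\eta_x}{2}+\frac{M_{\Psi_\delta}\eta_x^2}{2}$ on $\|\tilde\vu_t\|^2$.

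Next we produce the negative term $-\frac{\eta_x}{8}\|G(\vx_t,\nabla\Psi_\delta(\vx_t),\eta_x)\|^2$. Lemma~\ref{lemma: g&G_1} with $\tilde\vu_t=G_x(\vx_t,\vy_t,\vu_t,\eta_x)$ gives $\|G(\vx_t,\nabla\Psi_\delta(\vx_t),\eta_x)\|\le\|\tilde\vu_t\|+\|\vu_t-\nabla\Psi_\delta(\vx_t)\|$, hence
\begin{align*}
\tfrac18\|G\|^2\le\tfrac14\|\tilde\vu_t\|^2+\tfrac14\|\vu_t-\nabla\Psi_\delta(\vx_t)\|^2 .
\end{align*}
Adding and subtracting $\frac{\eta_x}{8}\|G\|^2$ therefore costs at most $\frac{\eta_x}{4}\|\tilde\vu_t\|^2+\frac{\eta_x}{4}\|\vu_t-\nabla\Psi_\delta\|^2$. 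The extra $\frac{\eta_x}{4}\|\tilde\vu_t\|^2$ should be absorbed into $-\eta_x\langle\vu_t,\tilde\vu_t\rangle$, using $\langle \vu_t,\tilde\vu_t\rangle\ge\|\tilde\vu_t\|^2$ from Lemma~\ref{lemma: g&G_2}. I would split $-\eta_x\langle\vu_t,\tilde\vu_t\rangle$ into a retained part and a part spent on this absorption. If that bookkeeping does not reproduce the exact constants, I would instead use the Young splitting with parameter $\tfrac14$ rather than $\tfrac12$ and adjust. Either way the estimate carries a factor $\tfrac34\eta_x$ on $\|\vu_t-\nabla\Psi_\delta(\vx_t)\|^2$.

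We then decompose
\begin{align*}
\|\vu_t-\nabla\Psi_\delta(\vx_t)\|^2\le 2\|\vu_t-\nabla_xf_\delta(\vx_t,\vy_t)\|^2+2\|\nabla_xf_\delta(\vx_t,\vy_t)-\nabla\Psi_\delta(\vx_t)\|^2 .
\end{align*}
The first term times $\tfrac34\eta_x$ gives $\tfrac{3\eta_x}{2}\|\vu_t-\nabla_xf_\delta\|^2$. For the second term we use Lemma~\ref{lemma: A1} with stepsize $\eta_y\le 1/M_{f_\delta}$; this holds since $\eta_y=\delta/(4c\sqrt dL)=1/(4M_{f_\delta})$. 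It yields $\|\nabla_xf_\delta-\nabla\Psi_\delta\|\le 2\kappa\|G_y(\vx_t,\vy_t,\nabla_yf_\delta(\vx_t,\vy_t),\eta_y)\|$. By Lemma~\ref{lemma: g&G_1},
\begin{align*}
\|G_y(\vx_t,\vy_t,\nabla_yf_\delta,\eta_y)\|\le\|\tilde\vv_t\|+\|\nabla_yf_\delta(\vx_t,\vy_t)-\vv_t\| .
\end{align*}
Squaring and collecting constants gives $\tfrac34\eta_x\cdot2\cdot4\kappa^2\cdot2=12\kappa^2\eta_x$ on both $\|\tilde\vv_t\|^2$ and $\|\nabla_yf_\delta-\vv_t\|^2$, matching \eqref{eq:varphi}.

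Finally we take expectations. The main obstacle is the constant bookkeeping in the second step: making $\frac{\eta_x}{8}\|G\|^2$ appear while still leaving exactly $-\eta_x\langle\vu_t,\tilde\vu_t\rangle+\frac{\eta_x}{2}\|\tilde\vu_t\|^2$. If necessary I would retreat to a slightly different grouping, for example one in which the $\frac{\eta_x}{8}$ term costs only $\frac{\eta_x}{4}\|\vu_t-\nabla\Psi_\delta\|^2$, with the Young parameter chosen so the coefficients $\tfrac32$ and $12\kappa^2$ come out.
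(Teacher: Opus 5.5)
Your smoothness step, the use of Lemma~\ref{lemma: A1} (valid since $\eta_y=1/(4M_{f_\delta})$), and the final count $\tfrac34\eta_x\cdot2\cdot4\kappa^2\cdot2=12\kappa^2\eta_x$ are all fine. The gap is the step that creates $-\tfrac{\eta_x}{8}\norm{G}^2$, where $G=G(\vx_t,\nabla\Psi_\delta(\vx_t),\eta_x)$, and it cannot be closed with the tools you list. Write $\vr_t=\vu_t-\nabla\Psi_\delta(\vx_t)$.

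First, the extra $\tfrac{\eta_x}{4}\norm{\tilde\vu_t}^2$ cannot be absorbed into $-\eta_x\langle\vu_t,\tilde\vu_t\rangle$. The statement keeps that inner product in full, and $\langle\vu_t,\tilde\vu_t\rangle\ge\norm{\tilde\vu_t}^2$ only lets you trade it for $-\tfrac34\eta_x\langle\vu_t,\tilde\vu_t\rangle$, which is a weaker bound.

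Second, your fallback is miscounted. With $\eta_x\langle\vr_t,\tilde\vu_t\rangle\le\tfrac{\eta_x}{4}\norm{\tilde\vu_t}^2+\eta_x\norm{\vr_t}^2$, the coefficient on $\norm{\vr_t}^2$ becomes $\tfrac54\eta_x$, not $\tfrac34\eta_x$. That gives $\tfrac52\eta_x$ and $20\kappa^2\eta_x$ instead of $\tfrac32\eta_x$ and $12\kappa^2\eta_x$.

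Third, no retuning helps. Suppose you keep exactly $\tfrac{\eta_x}{2}\norm{\tilde\vu_t}^2$ and use only Young on $\langle\vr_t,\tilde\vu_t\rangle$ plus the triangle bound $\norm{G}\le\norm{\tilde\vu_t}+\norm{\vr_t}$. Then the smallest coefficient on $\norm{\vr_t}^2$ you can certify is $\sup_{s\ge0}\big(s-\tfrac{s^2}{2}+\tfrac{(1+s)^2}{8}\big)\eta_x=\tfrac76\eta_x$. The stated $\tfrac32$ and $12\kappa^2$, reached through your last paragraph with any Young weights, leave room for at most $\tfrac34\eta_x$.

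What is missing is a coupling between $\tilde\vu_t-G$ and $\vr_t$. The paper gets it by splitting $\vr_t$ into $\vu_t-\nabla_xf_\delta(\vx_t,\vy_t)$ and $\nabla_xf_\delta(\vx_t,\vy_t)-\nabla\Psi_\delta(\vx_t)$ before any Young step:
\begin{itemize}
\item The first piece is paired with $\tilde\vu_t-G_x(\vx_t,\vy_t,\nabla_xf_\delta(\vx_t,\vy_t),\eta_x)$, which costs $\norm{\vu_t-\nabla_xf_\delta(\vx_t,\vy_t)}^2$ and no $\norm{\tilde\vu_t}^2$. It is also paired with $G_x(\vx_t,\vy_t,\nabla_xf_\delta(\vx_t,\vy_t),\eta_x)$, a term the paper sets to zero in expectation.
\item Only the second piece pays $\tfrac14\norm{\tilde\vu_t}^2$.
\item The remaining $\tfrac14\norm{\tilde\vu_t}^2$ is converted via $-\tfrac14\norm{\tilde\vu_t}^2\le\tfrac14\norm{\tilde\vu_t-G}^2-\tfrac18\norm{G}^2$.
\end{itemize}
That zero-expectation step presumes $\vu_t$ is unbiased given $(\vx_t,\vy_t)$, which the recursive estimator does not guarantee.

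A repair that stays pointwise, as your argument does, uses firm nonexpansiveness of $\Pi_\fX$. With $\vs_t=\tilde\vu_t-G$ one has $\langle\vs_t,\vr_t\rangle\ge\norm{\vs_t}^2$, so
\begin{align*}
\langle\vr_t,\tilde\vu_t\rangle-\tfrac12\norm{\tilde\vu_t}^2+\tfrac18\norm{\tilde\vu_t-\vs_t}^2
&=\langle\vr_t-\tfrac14\vs_t,\tilde\vu_t\rangle-\tfrac38\norm{\tilde\vu_t}^2+\tfrac18\norm{\vs_t}^2\\
&\le\tfrac23\norm{\vr_t-\tfrac14\vs_t}^2+\tfrac18\norm{\vs_t}^2
=\tfrac23\norm{\vr_t}^2-\tfrac13\langle\vr_t,\vs_t\rangle+\tfrac16\norm{\vs_t}^2
\le\tfrac23\norm{\vr_t}^2.
\end{align*}
Since $G=\tilde\vu_t-\vs_t$, this gives
\[
-\eta_x\langle\nabla\Psi_\delta(\vx_t),\tilde\vu_t\rangle\le-\eta_x\langle\vu_t,\tilde\vu_t\rangle+\tfrac{\eta_x}{2}\norm{\tilde\vu_t}^2-\tfrac{\eta_x}{8}\norm{G}^2+\tfrac{2\eta_x}{3}\norm{\vr_t}^2.
\]
Your last paragraph then produces $\tfrac43\eta_x$ and $\tfrac{32}{3}\kappa^2\eta_x$, which are within the stated constants.
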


\begin{proof}
According to Proposition \ref{prop:smooth-Psi-delta}, 
the $M_{\Psi_\delta}$-smoothness of $\Psi_\delta$ implies
\begin{align}\label{inter00}
\begin{split}
\quad \mathbb{E}[\Psi_{\delta}(  \vx_{t+1})] 
\le&
\mathbb{E} \left [ \Psi_{\delta}(  \vx_t) + \left\langle \nabla \Psi_{\delta}(  \vx_t),\,  \vx_{t+1} -  \vx_t \right\rangle + \frac{M_{\Psi_\delta} }{2} \Vert   \vx_{t+1} -   \vx_t \Vert^2 \right] \\
=& \mathbb{E} \left [\Psi_{\delta}(\vx_t) - \eta_x \left\langle \nabla \Psi_{\delta}(\vx_t), \tilde{\vu}_t \right\rangle + \frac{M_{\Psi_\delta} \eta_x^2 }{2} \Vert   \tilde{\vu}_t \Vert^2 \right],
\end{split}
\end{align}
where the last step is based on the definition of ${\tilde\vu}_t$ defined in equation (\ref{notation: simplify of the gradient mapping}). 

We split the expectation for inner product $-\left\langle \nabla \Psi_{\delta}(  \vx_t), \tilde{\vu}_t \right\rangle$ in equation \eqref{inter0} as follows
\begin{align}\label{inter0}
\begin{split}
\!\!\!& \E\left[-\left\langle \nabla \Psi_{\delta}(  \vx_t), \tilde{\vu}_t \right\rangle \right]\\
\!\!\!=& \E\left[ -  \left\langle \vu_t,\tilde{\vu}_t \right\rangle
 +\left\langle \vu_t-\nabla_x f_\delta(\vx_t,\vy_t), \tilde{\vu}_t \right\rangle  
+  \left\langle \nabla_x f_\delta(\vx_t,\vy_t)-\nabla\Psi_\delta(\vx_t), \tilde{\vu}_t \right\rangle
\right]\\
\!\!\!=&
\E\Big[ - \left\langle \vu_t,\tilde{\vu}_t \right\rangle
 + \underbrace{\left\langle \vu_t-\nabla_x f_\delta(\vx_t,\vy_t), \tilde{\vu}_t -G_x(\vx_t,\vy_t,\nabla_x f_\delta(\vx_t,\vy_t),\eta_x)\right\rangle}_{A_1}  \\
\!\!\!&\quad\! +  \underbrace{\left\langle \vu_t-\nabla_x f_\delta(\vx_t,\vy_t), G_x(\vx_t,\vy_t,\nabla_x f_\delta(\vx_t,\vy_t),\eta_x) \right\rangle}_{A_2} 
 + \underbrace{\left\langle \nabla_x f_\delta(\vx_t,\vy_t)-\nabla\Psi_\delta(\vx_t), \tilde{\vu}_t \right\rangle}_{A_3}\!\Big].
\end{split}
\end{align}
For the term $A_1$, it holds
\begin{align}\label{ineq: A_1}
\begin{split}
    A_1
=& \left\langle \vu_t-\nabla_x f_\delta(\vx_t,\vy_t), \tilde{\vu}_t -G_x(\vx_t,\vy_t,\nabla_x f_\delta(\vx_t,\vy_t),\eta_x)
\right\rangle\\
=&\left\langle \vu_t-\nabla_x f_\delta(\vx_t,\vy_t), G_x(\vx_t,\vy_t,{\vu}_t,\eta_x) -G_x(\vx_t,\vy_t,\nabla_x f_\delta(\vx_t,\vy_t),\eta_x)
\right\rangle \\
\leq & \|\vu_t-\nabla_x f_\delta(\vx_t,\vy_t)\|\cdot
\|G_x(\vx_t,\vy_t,{\vu}_t,\eta_x) -G_x(\vx_t,\vy_t,\nabla_x f_\delta(\vx_t,\vy_t),\eta_x)\|\\
\le & \|\vu_t-\nabla_x f_\delta(\vx_t,\vy_t)\|^2,
\end{split}
\end{align}
where the first inequality is obtained by Cauchy--Schwarz inequality and the second inequality is based on Lemma $\ref{lemma: g&G_1}$.

For the term $A_2$, the definition of $\vu_t$ and Proposition \ref{prop:g-unbiased-variance} implies
\begin{align}\label{eq: A_2}
    \E[A_2]=\langle\mathbb{E}[\vu_t-\nabla_x f_\delta(\vx_t,\vy_t)],G_x(\vx_t,\vy_t,\nabla_x f_\delta(\vx_t,\vy_t),\eta_x)\rangle =0.
\end{align}

For the term $A_3$, it holds
\begin{align}\label{ineq: A_3}
\begin{split}
A_3 ={} & \langle\nabla_x f_\delta(\vx_t,\vy_t)-\nabla\Psi_\delta(\vx_t),   \tilde{\vu}_t\rangle\\
\le{} & \|\nabla_x f_\delta(\vx_t,\vy_t) - \nabla \Psi_\delta(\vx_t)\|^2+\frac{1}{4}\|\tilde{\vu}_t\|^2\\
\le{} & 4\kappa^2 \|G_y(\vx_t,\vy_t, \nabla_y    f_\delta(\vx_t,\vy_t),\eta_y)\|^2
    +\frac{1}{4}\|\tilde{\vu}_t\|^2\\
\le{} & 8\kappa^2 \|G_y(\vx_t,\vy_t,\nabla_y f_\delta(\vx_t,\vy_t),\eta_y)-G_y(\vx_t,\vy_t,\vv_t,\eta_y)\|^2 \\
    & +8\kappa^2 \|G_y(\vx_t,\vy_t,\vv_t,\eta_y)\|^2
    +\frac{1}{4}\|\tilde{\vu}_t\|^2\\
\le{} & 8\kappa^2 \|\nabla_y f_\delta(\vx_t,\vy_t)-\vv_t\|^2 +8\kappa^2 \|\tilde{\vv}_t\|^2
    +\frac{1}{4}\|\tilde{\vu}_t\|^2,
\end{split}
\end{align}
where the first inequality is based on the fact $ \langle \va,\vb\rangle \leq \norm{\va}^2 + \frac{1}{4}\norm{\vb}^2$ for all $\va,\vb\in\BR^{d_x}$; the second inequality is based on Lemma \ref{lemma: A1} and definition $\kappa=M_{f_\delta}/\mu$; the third inequality is based on Young's inequality; and the last step is based on Lemma \ref{lemma: g&G_2} and equation \eqref{notation: simplify of the gradient mapping}. 

Combining equations (\ref{inter0}), (\ref{ineq: A_1}), (\ref{eq: A_2}) and (\ref{ineq: A_3}), we obtain
\begin{align}\label{inter1}
\begin{split}
\!\!\!& \E\left[-\left\langle \nabla \Psi_{\delta}(  \vx_t), \tilde{\vu}_t \right\rangle \right]\\
\!\!\!=& \E\!\left[-\left\langle \vu_t,\tilde{\vu}_t \right\rangle
+\|\vu_t-\nabla_x f_\delta(\vx_t,\vy_t)\|^2\!+8\kappa^2 \|\nabla_y f_\delta(\vx_t,\vy_t)-\vv_t\|^2\!+8\kappa^2 \|\tilde{\vv}_t\|^2\!+\frac{1}{4}\|\tilde{\vu}_t\|^2\right]\!.\!
\end{split}
\end{align}
Moreover, we can write the last term in equation \eqref{inter1} as
\begin{align}\label{eq:u4121}
\frac{1}{4}\|\tilde{\vu}_t\|^2=\frac{1}{2}\|\tilde{\vu}_t\|^2 -\frac{1}{4}\|\tilde{\vu}_t\|^2   
\end{align}
and provide an upper bound for $-\frac{1}{4}\|\tilde{\vu}_t\|^2$ as follows
\begin{align}\label{the internal ineq}
\begin{split}
    \!\! &-\frac{1}{4}\|\tilde{\vu}_t\|^2=-\frac{1}{4}\Vert   G_x(\vx_t,\vy_t,\vu_t,\eta_x) \Vert^2\\
    \!\! \le&
    \frac{1}{4}\Vert   G_x(\vx_t,\vy_t,\vu_t,\eta_x)-G(\vx_t,\nabla\Psi_\delta(\vx_t),\eta_x) \Vert^2
    -\frac{1}{8}\Vert G(\vx_t,\nabla\Psi_\delta(\vx_t),\eta_x) \Vert^2
    \\
   \!\! \le&
   \frac{1}{4} \Vert   \vu_t-\nabla\Psi_\delta(\vx_t) \Vert^2
    -\frac{1}{8}\Vert G(\vx_t,\nabla\Psi_\delta(\vx_t),\eta_x) \Vert^2
    \\
    \!\! \le&
    \frac{1}{2}\Vert   \vu_t-\nabla_x f_\delta(\vx_t,\vy_t) \Vert^2
    +\frac{1}{2}\Vert \nabla\Psi_\delta(\vx_t)-\nabla_x f_\delta(\vx_t,\vy_t) \Vert^2
    -\frac{1}{8}\Vert G(\vx_t,\nabla\Psi_\delta(\vx_t),\eta_x) \Vert^2
    \\
    \!\!\le
    &
    \frac{1}{2}\Vert   \vu_t-\nabla_x f_\delta(\vx_t,\vy_t) \Vert^2
    +2\kappa^2 \| G_y(\vx_t, \vy_t,\nabla_y f_\delta(\vx_t,\vy_t),\eta_y) \|^2
    -\frac{1}{8}\Vert G(\vx_t,\nabla\Psi_\delta(\vx_t),\eta_x) \Vert^2\\
    \!\! \le
    &
    \frac{1}{2}\Vert   \vu_t-\nabla_x f_\delta(\vx_t,\vy_t) \Vert^2
    +4\kappa^2 \| G_y(\vx_t, \vy_t,\nabla_y f_\delta(\vx_t,\vy_t),\eta_y)-\tilde \vv_t \|^2 
    +4\kappa^2 \| \tilde \vv_t \|^2\\
    \!\! & -\frac{1}{8}\Vert G(\vx_t,\nabla\Psi_\delta(\vx_t),\eta_x) \Vert^2\\
    \!\! \le &
    \frac{1}{2}\Vert   \vu_t-\nabla_x f_\delta(\vx_t,\vy_t) \Vert^2 \!
    +4\kappa^2 \| \nabla_y f_\delta(\vx_t,\vy_t)- \vv_t \|^2\! 
    +4\kappa^2 \| \tilde \vv_t \|^2 \!
     -\frac{1}{8}\Vert G(\vx_t,\nabla\Psi_\delta(\vx_t),\eta_x) \Vert^2,\!\!\!\\
    \end{split}
\end{align}
where the second and the last inequality is based on Lemma \ref{lemma: g&G_2}; 
the fourth inequality is based on Lemma~\ref{lemma: A1}; 
and the other inequalities are based on Young's inequality. 

Combining equations (\ref{inter1}), (\ref{eq:u4121}) and (\ref{the internal ineq}), we obtain
\begin{align*}
& \E\left[-\left\langle \nabla \Psi_{\delta}(  \vx_t), \tilde{\vu}_t \right\rangle \right]\\
\le& \E \bigg[-\left\langle \vu_t,\tilde{\vu}_t \right\rangle +\|\vu_t-\nabla_x f_\delta(\vx_t,\vy_t)\|^2\!+8\kappa^2 \|\nabla_y f_\delta(\vx_t,\vy_t)-\vv_t\|^2\!+8\kappa^2 \|\tilde{\vv}_t\|^2 + \frac{1}{2}\|\tilde{\vu}_t\|^2  \\
&+\frac{1}{2}\Vert   \vu_t-\nabla_x f_\delta(\vx_t,\vy_t) \Vert^2
    +4\kappa^2 \| \nabla_y f_\delta(\vx,\vy)- \vv_t \|^2\! 
    +4\kappa^2 \| \tilde \vv_t \|^2 \!
     -\frac{1}{8}\Vert G(\vx_t,\nabla\Psi_\delta(\vx_t),\eta_x) \Vert^2
\bigg]\\
=& \E \bigg[-\left\langle \vu_t,\tilde{\vu}_t \right\rangle 
+\frac{3}{2} \|\vu_t-\nabla_x f_\delta(\vx_t,\vy_t)\|^2+12\kappa^2 \|\nabla_y f_\delta(\vx_t,\vy_t)-\vv_t\|^2  \\
& + \frac{1}{2}\|\tilde{\vu}_t\|^2 + 12\kappa^2 \|\tilde{\vv}_t\|^2
-\frac{1}{8}\Vert G(\vx_t,\nabla\Psi_\delta(\vx_t),\eta_x) \Vert^2
\bigg].
\end{align*}
By substituting above result into equation \eqref{inter00}, we finish the proof.

\end{proof}

\begin{lemma}\label{lemma:C.2}
Following the settings of Lemma \ref{update} and notations (\ref{Notation: smoothness})--(\ref{notation: full z and g}),  it holds
    \begin{align}\label{fxy}
    \begin{split}
        &\E[-f_\delta(\vx_{t+1},\vy_{t+1})]
        \le \mathbb{E} \bigg[ - f_{\delta}(  \vx_t,  \vy_t) +\eta_x\langle\vu_t,\tilde{\vu}_t\rangle     \\
&\qquad  \qquad
+\left( \frac{\eta_x^2 M_{f_\delta}}{2} + \eta_x^2 \eta_y M_{f_\delta}^2 + \frac{ \eta_x^2}{2\eta_y} \right) \Vert   \tilde{\vu}_t \Vert^2  +  \frac{\eta_y}{2} \Vert \nabla_x f_{\delta}(\vx_t,\mathbf{y}_t ) -   \vu_t \Vert^2 \\
&\qquad \qquad
- \left(\frac{\eta_y}{4} - \frac{\eta_y^2 M_{f_\delta}}{2}\right)\Vert \tilde\vv_{t}\Vert^2
+ \frac{\eta_y}{2} \Vert\nabla_y f_{\delta}(  \vx_t, \mathbf{y}_t )- \mathbf{v}_t  \Vert^2 \bigg].
    \end{split}
    \end{align}
\end{lemma}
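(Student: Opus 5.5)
We split the change in $-f_\delta$ into an $\vx$-step at fixed $\vy_t$ and a $\vy$-step at fixed $\vx_{t+1}$:
\[
-f_\delta(\vx_{t+1},\vy_{t+1}) = \big[-f_\delta(\vx_{t+1},\vy_t)\big] + \big[f_\delta(\vx_{t+1},\vy_t)-f_\delta(\vx_{t+1},\vy_{t+1})\big].
\]

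\textbf{The $\vx$-step.} By the $M_{f_\delta}$-smoothness of $f_\delta$ (Proposition \ref{prop:smoothing}(c)) and $\vx_{t+1}-\vx_t=-\eta_x\tilde\vu_t$,
\[
-f_\delta(\vx_{t+1},\vy_t)\le -f_\delta(\vx_t,\vy_t)+\eta_x\langle\nabla_x f_\delta(\vx_t,\vy_t),\tilde\vu_t\rangle+\tfrac{M_{f_\delta}\eta_x^2}{2}\|\tilde\vu_t\|^2 .
\]
We write $\nabla_x f_\delta=\vu_t+(\nabla_x f_\delta-\vu_t)$. The first piece gives the term $\eta_x\langle\vu_t,\tilde\vu_t\rangle$. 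The cross term is bounded by Young's inequality:
\[
\eta_x\langle\nabla_x f_\delta(\vx_t,\vy_t)-\vu_t,\tilde\vu_t\rangle\le \tfrac{\eta_y}{2}\|\nabla_x f_\delta(\vx_t,\vy_t)-\vu_t\|^2+\tfrac{\eta_x^2}{2\eta_y}\|\tilde\vu_t\|^2 .
\]
This produces the $\tfrac{\eta_x^2}{2\eta_y}\|\tilde\vu_t\|^2$ and $\tfrac{\eta_y}{2}\|\nabla_x f_\delta-\vu_t\|^2$ terms in \eqref{fxy}.

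\textbf{The $\vy$-step.} Smoothness in $\vy$ at the point $\vx_{t+1}$, with $\vy_{t+1}-\vy_t=\eta_y\tilde\vv_t$, gives
\[
f_\delta(\vx_{t+1},\vy_t)-f_\delta(\vx_{t+1},\vy_{t+1})\le -\eta_y\langle\nabla_y f_\delta(\vx_{t+1},\vy_t),\tilde\vv_t\rangle+\tfrac{M_{f_\delta}\eta_y^2}{2}\|\tilde\vv_t\|^2 .
\]
We decompose
\[
\nabla_y f_\delta(\vx_{t+1},\vy_t)=\vv_t+\big(\nabla_y f_\delta(\vx_t,\vy_t)-\vv_t\big)+\big(\nabla_y f_\delta(\vx_{t+1},\vy_t)-\nabla_y f_\delta(\vx_t,\vy_t)\big).
\]
Each piece is handled as follows.
\begin{itemize}
\item For the $\vv_t$ piece, Lemma \ref{lemma: g&G_2} gives $\langle\vv_t,\tilde\vv_t\rangle\ge\|\tilde\vv_t\|^2$, contributing $-\eta_y\|\tilde\vv_t\|^2$.
\item For the noise piece, Young's inequality gives at most $\tfrac{\eta_y}{2}\|\nabla_y f_\delta(\vx_t,\vy_t)-\vv_t\|^2+\tfrac{\eta_y}{2}\|\tilde\vv_t\|^2$.
\item For the drift piece, Lipschitz continuity of the gradient gives $\|\nabla_y f_\delta(\vx_{t+1},\vy_t)-\nabla_y f_\delta(\vx_t,\vy_t)\|\le M_{f_\delta}\eta_x\|\tilde\vu_t\|$. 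Young's inequality then gives at most $\eta_y M_{f_\delta}^2\eta_x^2\|\tilde\vu_t\|^2+\tfrac{\eta_y}{4}\|\tilde\vv_t\|^2$.
\end{itemize}
Collecting the $\|\tilde\vv_t\|^2$ coefficients gives $-\eta_y+\tfrac{\eta_y}{2}+\tfrac{\eta_y}{4}+\tfrac{M_{f_\delta}\eta_y^2}{2}=-\big(\tfrac{\eta_y}{4}-\tfrac{\eta_y^2M_{f_\delta}}{2}\big)$, as required.

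Summing the two steps and taking expectations yields \eqref{fxy}. No unbiasedness argument is needed, since every term is bounded pathwise.

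\textbf{Main obstacle.} The only delicate point is the bookkeeping in the $\vy$-step. The Young's-inequality weights must be chosen so that the total $\|\tilde\vv_t\|^2$ coefficient comes out exactly to $\tfrac{\eta_y}{4}$. They must also produce the stated $\eta_x^2\eta_yM_{f_\delta}^2$ factor on $\|\tilde\vu_t\|^2$, which is why the drift term is paired with weight $\tfrac{\eta_y}{4}$ rather than $\tfrac{\eta_y}{2}$.
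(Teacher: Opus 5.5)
Your proposal is correct and matches the paper's proof essentially step for step. It uses the same smoothness split into an $\vx$-step at $\vy_t$ and a $\vy$-step at $\vx_{t+1}$, the same Young's inequality with weights $\eta_y/2$ and $\eta_x^2/(2\eta_y)$ on the $\vx$-noise term, and the same three-way decomposition of $\nabla_y f_\delta(\vx_{t+1},\vy_t)$ (the paper's $B_1$ and $B_2$) with weights $\eta_y/2$ and $\eta_y/4$; your version keeps the $\vy$-gradient at $\vx_{t+1}$ explicitly, whereas the paper's displays \eqref{inter5} and \eqref{inter7} write $\nabla_y f_\delta(\vx_t,\vy_t)$ there, a slip that the drift term $B_2$ silently corrects.
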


\begin{proof}
According to Proposition \ref{prop:smoothing}, the $M_{f_\delta}$-smoothness of $f_{\delta}(\vx,\vy)$ implies
\begin{align}\label{inter5}
\begin{split}
& \BE \left[- f_{\delta}(  \vx_{t+1}, \mathbf{y}_{t+1} )\right]\\
\leq &
\mathbb{E}\left[ -f_{\delta}(  \vx_{t+1}, \mathbf{y}_t ) 
- \left\langle \nabla_y f_{\delta}(  \vx_{t+1},\mathbf{y}_t ),\, \mathbf{y}_{t+1}  - \mathbf{y}_t \right\rangle
+ \frac{M_{f_\delta}}{2} \Vert \mathbf{y}_{t+1}  -  \mathbf{y}_t  \Vert^2 \right] \\
\leq& 
\mathbb{E} \bigg[ - f_{\delta}(  \vx_t,\mathbf{y}_t ) 
-  \left\langle \nabla_x f_{\delta}(  \vx_t,\mathbf{y}_t ),\,  \vx_{t+1} -   \vx_t \right\rangle
+ \frac{M_{f_\delta}}{2} \Vert   \vx_{t+1} -   \vx_t \Vert^2 \\
&\quad 
- \left\langle \nabla_y f_{\delta}(  \vx_{t+1},\mathbf{y}_t ),\, \mathbf{y}_{t+1}  - \mathbf{y}_t \right\rangle
+ \frac{M_{f_\delta}}{2} \Vert \mathbf{y}_{t+1}  - \mathbf{y}_t  \Vert^2\bigg] \\
=& \mathbb{E} \bigg[- f_{\delta}(  \vx_t,  \vy_t)
+  \eta_x\left\langle \nabla_x f_{\delta}(  \vx_t,\mathbf{y}_t ),\,  \tilde{\vu}_t \right\rangle
+ \frac{\eta_x^2 M_{f_\delta}}{2} \Vert    \tilde{\vu}_t  \Vert^2  \\&\quad\quad
-\eta_y \left\langle \nabla_y f_{\delta}(  \vx_t, \mathbf{y}_t ),  \tilde{\vv}_t  \right\rangle
+ \frac{\eta_y^2 M_{f_\delta}}{2} \Vert  \tilde{\vv}_t   \Vert^2
\bigg],
\end{split}
\end{align}
where the last step follows equation \eqref{notation: simplify of the gradient mapping}.
For the term $\eta_x\left\langle \nabla_x f_{\delta}(  \vx_t,\mathbf{y}_t ),\,  \tilde{\vu}_t \right\rangle$ in equation~\eqref{inter5}, we provide its upper bound as follows
\begin{align}
\label{inter6}
\begin{split}
     &\eta_x\langle\nabla_x f_{\delta}(  \vx_t,\mathbf{y}_t ), \tilde{\vu}_t\rangle \\
     =&\eta_x\langle\nabla_x f_{\delta}(  \vx_t,\mathbf{y}_t)-\vu_t,\tilde{\vu}_t\rangle+\eta_x\langle\vu_t,\tilde{\vu}_t\rangle\\
     \le&\frac{\eta_y}{2}\|\nabla_x f_{\delta}(  \vx_t,\mathbf{y}_t )-\vu_t\|^2+\frac{\eta_x^2}{2\eta_y}\|\tilde{\vu}_t\|^2+\eta_x\langle\vu_t,\tilde{\vu}_t\rangle,
\end{split}
\end{align}
where the inequality is based on $ \eta_x\langle \va,\vb\rangle \leq \eta_y\norm{\va}^2/2 + \eta_x^2\norm{\vb}^2/(2\eta_y)$ for all $\va,\vb\in\BR^{d_x}$. 

For term $-\eta_y \left\langle \nabla_y f_{\delta}(  \vx_t, \mathbf{y}_t ),  \tilde{\vv}_t  \right\rangle$ in equation (\ref{inter5}), we split it into
\begin{align}\label{inter7}
\begin{split}
&-\eta_y \left\langle \nabla_y f_{\delta}(  \vx_t, \mathbf{y}_t ),  \tilde{\vv}_t  \right\rangle\\
=&
\underbrace{-\eta_y \left\langle \nabla_y f_{\delta}(  \vx_t, \mathbf{y}_t )- \mathbf{v}_t ,\,   \tilde{\vv}_t  \right\rangle
-\eta_y \left\langle \mathbf{v}_t,\,  \tilde{\vv}_t  \right\rangle}_{B_1}
+   \underbrace{\eta_y \left\langle \nabla_y f_{\delta}(  \vx_t, \mathbf{y}_t )- \nabla_y f_{\delta}(  \vx_{t+1},\mathbf{y}_t ),
 \tilde{\vv}_t \right\rangle}_{B_2}.
 \end{split}
\end{align}
For the term $B_1$, it holds
\begin{align}\label{inter8}
\begin{split}
B_1 & = -\eta_y \left\langle \nabla_y f_{\delta}(  \vx_t, \mathbf{y}_t )- \mathbf{v}_t ,  \tilde{\vv}_t  \right\rangle
-\eta_y \left\langle \mathbf{v}_t,\,  \tilde{\vv}_t  \right\rangle\\
& \le \frac{\eta_y}{2}\|\nabla_y f_{\delta}(  \vx_t, \mathbf{y}_t )- \mathbf{v}_t\|^2+
\frac{\eta_y}{2}\|\tilde{\vv}_t\|^2 -\eta_y\|\tilde{\vv}_t\|^2\\
& = \frac{\eta_y}{2}\|\nabla_y f_{\delta}(  \vx_t, \mathbf{y}_t )- \mathbf{v}_t\|^2-
\frac{\eta_y}{2}\|\tilde{\vv}_t\|^2, 
\end{split}
\end{align}
where the first inequality is based on the fact $ -\langle \va,\vb\rangle \leq \norm{\va}^2/2 + \norm{\vb}^2/2$ for all $\va,\vb\in\BR^{d_x}$, Lemma~\ref{lemma: g&G_2}, and the definition of $\tilde{\vv}_t$ in equation \eqref{notation: simplify of the gradient mapping}.
For the term $B_2$, it holds
\begin{align}\label{inter9}
\begin{split}
 B_2 & =  \eta_y\langle\nabla_y f_{\delta}( \vx_t,  \vy_t )- \nabla_y f_{\delta}( \vx_{t+1}, \vy_t )),  \tilde{\vv}_t\rangle\\
 & \le 
   \eta_y \Vert \nabla_y f_{\delta}( \vx_{t+1},  \vy_t ) - \nabla_y f_{\delta}( \vx_t, \vy_t ) \Vert^2 +  \frac{\eta_y}{4} \Vert  \tilde{\vv}_t  \Vert^2 \\
& \le    \eta_x^2 \eta_y M_{f_\delta}^2 \Vert  \tilde{\vu}_t \Vert^2 +  \frac{\eta_y}{4} \Vert \tilde{\vv}_t  \Vert^2,
\end{split}
\end{align}
where the last inequality is based on the $M_{f_\delta}$-smoothness of $f_\delta$ and the definition of $\tilde{\vu}_t$ in equation~\eqref{notation: simplify of the gradient mapping}.

Combining equations (\ref{inter6}), (\ref{inter7}) and (\ref{inter8}), it holds
\begin{align}\label{inter10}
\begin{split}
&-\eta_y \left\langle \nabla_y f_{\delta}(  \vx_t, \mathbf{y}_t ),  \tilde{\vv}_t  \right\rangle\le
\frac{\eta_y}{2}\|\nabla_y f_{\delta}(  \vx_t, \mathbf{y}_t )- \mathbf{v}_t\|^2-
\frac{\eta_y}{4} \Vert \tilde{\vv}_t  \Vert^2 + \eta_x^2 \eta_y M_{f_\delta}^2 \Vert  \tilde{\vu}_t \Vert^2.
\end{split}
\end{align}
By substituting equations (\ref{inter6}) and (\ref{inter10}) back into equation (\ref{inter5}),  we finish the proof.
\end{proof}

\begin{lemma}\label{lemma: C.4 SVRG accelerate}
Following the settings of Lemma \ref{update} and notations (\ref{Notation: smoothness})--(\ref{notation: full z and g}),  it holds
\begin{align}\label{ineq: variance reduction}
\!\!\mathbb{E} \left\| \vg_{t+1} - \nabla f_\delta(\vz_{t+1}) \right\|^2 
\leq \frac{ 16\sqrt{2\pi}pdL^2}{\tilde{b}} + (1-p) \mathbb{E} \| \vg_t - \nabla f_\delta(\vz_t) \|^2 +\frac{d^2L^2}{b\delta^2} \| \vz_{t+1}-\vz_t\|^2.
\end{align}
\end{lemma}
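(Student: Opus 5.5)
The plan is the standard PAGE/SPIDER-type variance recursion: condition on the filtration $\fF_{t+1}$ generated by all randomness up to and including $\vz_{t+1}$, then split on the Bernoulli variable $\zeta_{t+1}$, which is independent of the samples drawn at step $t+1$.

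\emph{Case $\zeta_{t+1}=1$ (probability $p$).} Here $\vg_{t+1}=\vg(\vz_{t+1};\tilde\fS_{t+1})$ is a fresh mini-batch estimator with $\tilde b$ samples. Corollary \ref{Variance of the estimator} applied at the fixed point $\vz_{t+1}$ gives
\begin{align*}
\BE\big[\|\vg_{t+1}-\nabla f_\delta(\vz_{t+1})\|^2\mid \fF_{t+1}\big]\le \frac{16\sqrt{2\pi}dL^2}{\tilde b},
\end{align*}
which contributes $p\cdot 16\sqrt{2\pi}dL^2/\tilde b$.

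\emph{Case $\zeta_{t+1}=0$ (probability $1-p$).} We write
\begin{align*}
\vg_{t+1}-\nabla f_\delta(\vz_{t+1}) = \big(\vg_t-\nabla f_\delta(\vz_t)\big) + \vd_{t+1},
\end{align*}
where
\begin{align*}
\vd_{t+1} = \frac{1}{b}\sum_{i=1}^b \Big(\vg(\vz_{t+1};\vw_i,\vxi_i)-\vg(\vz_t;\vw_i,\vxi_i) - \big(\nabla f_\delta(\vz_{t+1})-\nabla f_\delta(\vz_t)\big)\Big).
\end{align*}
By the unbiasedness in Proposition \ref{prop:g-unbiased-variance}, $\BE[\vd_{t+1}\mid\fF_{t+1}]=\vzero$, while $\vg_t-\nabla f_\delta(\vz_t)$ is $\fF_{t+1}$-measurable. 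The cross term therefore vanishes, and the squared error equals $\|\vg_t-\nabla f_\delta(\vz_t)\|^2+\BE\|\vd_{t+1}\|^2$. Because the $b$ summands are i.i.d. and centered, and the variance is at most the second moment,
\begin{align*}
\BE\|\vd_{t+1}\|^2 \le \frac1b\BE\|\vg(\vz_{t+1};\vw,\vxi)-\vg(\vz_t;\vw,\vxi)\|^2.
\end{align*}

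The main obstacle is bounding this single-sample difference by $d^2L^2\delta^{-2}\|\vz_{t+1}-\vz_t\|^2$. Since $\|\vw\|=1$,
\begin{align*}
\|\vg(\vz_{t+1};\vw,\vxi)-\vg(\vz_t;\vw,\vxi)\| = \frac{d}{2\delta}\Big|\big(F(\vz_{t+1}+\delta\vw;\vxi)-F(\vz_t+\delta\vw;\vxi)\big)-\big(F(\vz_{t+1}-\delta\vw;\vxi)-F(\vz_t-\delta\vw;\vxi)\big)\Big|.
\end{align*}
Each of the two differences is at most $L(\vxi)\|\vz_{t+1}-\vz_t\|$ by Assumption \ref{asm:Lipschitz}. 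This uses that all the perturbed points lie in $\fX_\delta\times\fY_\delta$, which holds because $\vz_t,\vz_{t+1}\in\fX\times\fY$ (iterates are projected) and $\|\delta\vw\|=\delta$. Hence the difference is at most $\frac{d}{\delta}L(\vxi)\|\vz_{t+1}-\vz_t\|$. Squaring and using $\BE[L(\vxi)^2]\le L^2$ yields the bound $d^2L^2\|\vz_{t+1}-\vz_t\|^2/\delta^2$, and hence $\BE\|\vd_{t+1}\|^2\le d^2L^2\|\vz_{t+1}-\vz_t\|^2/(b\delta^2)$.

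Combining the two cases with weights $p$ and $1-p$, bounding $(1-p)\cdot d^2L^2\|\vz_{t+1}-\vz_t\|^2/(b\delta^2)$ by the same expression without the factor $1-p$, and taking total expectation gives \eqref{ineq: variance reduction}.
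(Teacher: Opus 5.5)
Your proposal is correct and follows the paper's proof essentially step by step: the Bernoulli split on $\zeta_{t+1}$, the fresh-batch bound from Corollary~\ref{Variance of the estimator}, the martingale cancellation of the cross term, the $1/b$ mini-batch reduction, and the Lipschitz bound on the single-sample difference. The only cosmetic difference is that you bound the two-point difference with the triangle inequality and the pointwise constant $L(\vxi)$, whereas the paper uses Young's inequality and the mean-squared Lipschitz bound; both give the same constant $d^2L^2/\delta^2$.
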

\begin{proof}
The update rule of $\vg_t=[\vu_t; \vv_t]$ in Algorithm \ref{alg:f} implies
\begin{align}\label{eq: vr-split with p}
\begin{split}
& \mathbb{E} \left[\left\| \vg_{t+1} - \nabla f_\delta(\vz_{t+1}) \right\|^2\right] \\
=& p\,\mathbb{E} \left[\left\| \vg(\vx_{t+1},\vy_{t+1};\tilde\fS_{t+1}) - \nabla f_\delta(\vz_{t+1}) \right\|^2\right] \\
&+  (1-p)\,\mathbb{E} \left[\left\| \vg_t + \vg(\vx_{t+1},\vy_{t+1};\fS_{t+1})-\vg(\vx_{t},\vy_{t};\fS_{t+1}) - \nabla f_\delta(\vz_{t+1}) \right\|^2\right].
\end{split}
\end{align}
According to Corollary \ref{Variance of the estimator}, the first term in the right side of (\ref{eq: vr-split with p}) could be bounded as
\begin{align}\label{inter p}
    p\,\mathbb{E} \left\| \vg(\vx_{t+1},\vy_{t+1};\tilde\fS_{t+1}) - \nabla f_\delta(\vx_{t+1},\vy_{t+1}) \right\|^2 \le\frac{ 16\sqrt{2\pi}pdL^2}{\tilde{b}}.
\end{align}
We then bound the second term in the right side of (\ref{eq: vr-split with p}) as
\begin{align}
&  \mathbb{E} \left[\left\| \vg_t -\nabla f_\delta(\vz_{t})+ \vg(\vx_{t+1},\vy_{t+1};\fS_{t+1})-\vg(\vx_{t},\vy_{t};\fS_{t+1}) - \nabla f_\delta(\vz_{t+1})+\nabla f_\delta(\vz_{t}) \right\|^2\right] \nonumber\\
=& \mathbb{E} \left[\| \vg(\vx_{t+1},\vy_{t+1};\fS_{t+1})-\vg(\vx_{t},\vy_{t};\fS_{t+1}) - \nabla f_\delta(\vz_{t+1})+\nabla f_\delta(\vz_{t}) \|^2\right] + \mathbb{E}\left[\| \vg_t -\nabla f_\delta(\vz_{t}) \|^2\right] \nonumber \\
 \le& \frac{1}{b} \mathbb{E} \left[\| \vg(\vx_{t+1},\vy_{t+1};\vw_{x,t+1},\vw_{y,t+1},\vxi_{t+1})-\vg(\vx_{t},\vy_{t};\vw_{x,t+1},\vw_{y,t+1},\vxi_{t+1}) \|^2\right]\nonumber\\
 &+ \mathbb{E} \left[\| \vg_t -\nabla f_\delta(\vz_{t}) \|^2\right]  \nonumber \\
\le& \frac{d^2L^2}{b\delta^2} \| \vz_{t+1}-\vz_t\|^2 +\mathbb{E} \| \vg_t - \nabla f_\delta(\vz_t) \|^2, \label{inter 1-p}
\end{align}
where the first equality is based on the martingale property according to Proposition 1 of \cite{fang2018spider};
the first inequality is based on Corollary \ref{Variance of the estimator}; the second inequality is based on applying Young's inequality and Assumption \ref{asm:Lipschitz} to obtain
\begin{align}\label{gx_1-gx_2}
\begin{split}
    &\mathbb{E}_{\vxi}\left[\|\vg(\vx_{t},\vy_{t};\vw_x,\vw_y,\vxi)-\vg(\vx_{t+1},\vy_{t+1};\vw_x,\vw_y,\vxi) \|^2\right]\\
    \le& \frac{d^2}{2\delta^2}\mathbb{E}_{\vxi}\left[\|F(\vx_t+\delta \vw_x,\vy_t+\delta \vw_y;\vxi)-F(\vx_{t+1}+\delta \vw_x,\vy_{t+1}+\delta \vw_y;\vxi)\|^2\right]\\
    & + \frac{d^2}{2\delta^2}\mathbb{E}_{\vxi}\left[\|F(\vx_t-\delta \vw_x,\vy_t-\delta \vw_y;\vxi)-F(\vx_{t+1}-\delta \vw_x,\vy_{t+1}-\delta \vw_y;\vxi)\|^2\right] \\
    \le & \frac{d^2 L^2}{\delta^2}\|\vz_t-\vz_{t+1}\|^2.
\end{split}
\end{align}
Combining equation (\ref{eq: vr-split with p}), (\ref{inter p}) and (\ref{inter 1-p}), we finish the proof.
\end{proof} 

Now we provide the proof of Lemma \ref{update} as follows.

\begin{proof}
According to Lemmas \ref{lemma:C.1} and \ref{lemma:C.2}, we add (\ref{eq:varphi}) multiplying $(1+\alpha)$ along with (\ref{fxy}) multiplying $\alpha$ to obtain
\begin{align}
& \BE[\Psi_{\delta}(\vx_{t+1})  + \alpha \left( \Psi_{\delta}(\vx_{t+1}) - f_{\delta}(\vx_{t+1}, \vy_{t+1})\right)] \nonumber\\
 \le&  \E[\Psi_{\delta}(\vx_t)  + \alpha \left( \Psi_{\delta}(\vx_{t}) - f_{\delta}(\vx_{t}, \vy_{t})\right)]
+ (1+\alpha) \mathbb{E}\left[  -\frac{\eta_x}{8}\Vert   G(\vx_t,\nabla\Psi_\delta(\vx_t),\eta_x) \Vert^2-\eta_x\vu_t^\top\tilde{\vu}_t\right] \nonumber\\
&+
(1+\alpha)\E\left[\left(\frac{\eta_x}{2}+\frac{M_{\Psi_\delta} \eta_x^2 }{2}\right) \Vert   \tilde{\vu}_t\Vert^2
 +\frac{3\eta_x}{2} \|\vu_t-\nabla_x f_\delta(\vx_t,\vy_t)\|^2
+12\kappa^2\eta_x\|\tilde{\vv}_t\|^2\right] \nonumber\\
 &+(1+\alpha)\E\left[12\kappa^2\eta_x\|\nabla_y f_\delta(\vx_t,\vy_t)-\vv_t\|^2\right] 
 +\alpha \E\left[ - \left(\frac{\eta_y}{4} - \frac{\eta_y^2 M_{f_\delta}}{2}\right) \Vert \tilde{\vv}_t   \Vert^2 +\eta_x \vu_t^\top\tilde{\vu}_t\right]  \nonumber\\
& + \alpha \E\left[ \left(\frac{\eta_x^2 M_{f_\delta}}{2} + \eta_x^2 \eta_y M_{f_\delta}^2 +\frac{ \eta_x^2}{2\eta_y} \right) \Vert   \tilde{\vu}_t \Vert^2 
+ \frac{\eta_y}{2} \Vert \nabla_x f_{\delta}(  \vx_t,\mathbf{y}_t ) -   \vu_t \Vert^2 + \frac{\eta_y}{2} \Vert\nabla_y f_{\delta}(  \vx_t, \mathbf{y}_t )- \mathbf{v}_t ) \Vert^2
\right] \nonumber\\
\le & \mathbb{E} \left[ \Psi_{\delta}(\vx_t)  + \alpha \left( \Psi_{\delta}(\vx_{t}) - f_{\delta}(\vx_{t}, \vy_{t})\right) - \frac{(1+\alpha)\eta_x}{8} \Vert G(\vx_t,\nabla\Psi_\delta(\vx_t),\eta_x)  \Vert^2\right] \nonumber\\
& + \left(\frac{3\eta_x(1+\alpha)}{2\eta_y} + \frac{\alpha}{2}\right) \eta_y \mathbb{E}\left[ \Vert   \vu_t  - \nabla_x f_{\delta}(  \vx_t, \mathbf{y}_t  ) \Vert^2\right] \nonumber\\
& + \left( \frac{12 \kappa^2 \eta_x(1+\alpha)}{\eta_y} + \frac{\alpha}{2} \right) \eta_y\mathbb{E}\left[ \Vert \mathbf{v}_t  - \nabla_y f_{\delta}(  \vx_t, \mathbf{y}_t  ) \Vert^2\right] \nonumber\\
& - \left(\eta_x- (1+\alpha) \left(\frac{\eta_x}{2} + \frac{M_{\Psi_\delta} \eta_x^2}{2} \right) - \alpha \left( \frac{M_{f_\delta}\eta_x^2 }{2} +\eta_x^2 \eta_y M_{f_\delta}   +\frac{ \eta_x^2}{2\eta_y} \right) \right) \mathbb{E}\left[  \Vert   \tilde{\vu}_t \Vert^2 \right] \nonumber\\
& - \left( \alpha \left( \frac{\eta_y}{4} - \frac{M_{f_\delta} \eta_y^2}{2} \right) -(1+\alpha) 12 \kappa^2 \eta_x\right) \mathbb{E}\left[   \Vert \tilde{\vv}_t  \Vert^2 \right], \label{eq: (1+alpha) Psi - f}
\end{align}
where the second inequality is obtained by the fact $\|\tilde{\vu}_t\|^2
\le\vu_t^\top\tilde{\vu}_t$ according to Lemma \ref{lemma: g&G_2} and rearranging the terms.
We take  
\[
\eta_x=\frac{\alpha}{768(1+\alpha)\kappa^2 M_{f_\delta}}
=\frac{\alpha\mu^2\delta^3}{768(1+\alpha)c^3d^{3/2}L^3}
\quad \text{and}\quad
\eta_y=\frac{1}{4 M_{f_\delta}}= \frac{\delta}{4c\sqrt{d}L},
\]
then above inequality 
and the definition of Lyapunov function $\mathcal{L}_t$ with $\alpha \in (0,1/4]$ implies
\begin{align*}
    & \frac{12(1+\alpha) \kappa^2 \eta_x}{\eta_y} + \frac{\alpha}{2} ,~~~
     \frac{3(1+\alpha) \eta_x}{2\eta_y}  + \frac{\alpha}{2} \le \alpha,~~~
    \alpha \left( \frac{\eta_y}{4} - \frac{M_{f_\delta} \eta_y^2}{2} \right) -(1+\alpha) 12\kappa^2 \eta_x =\frac{\alpha \eta_y}{16}, \\
    & \text{and} \quad \eta_x- (1+\alpha) \left(\frac{\eta_x}{2} + \frac{M_{\Psi_\delta} \eta_x^2}{2} \right) - \alpha \left( \frac{M_{f_\delta}\eta_x^2 }{2} +\eta_x^2 \eta_y M_{f_\delta}   +\frac{ \eta_x^2}{2\eta_y}\right) 
    \geq \frac{\eta_x}{4}.
\end{align*}

Combining above results with equation (\ref{eq: (1+alpha) Psi - f}), we achive
\begin{align}\label{eq: final (1+alpha) Psi - f}
\begin{split}
&\BE[\Psi_{\delta}(\vx_{t+1})  + \alpha \left( \Psi_{\delta}(\vx_{t+1}) - f_{\delta}(\vx_{t+1}, \vy_{t+1})\right)] 
 \\
\le& \E\Big[ \Psi_{\delta}(\vx_t)  + \alpha \left( \Psi_{\delta}(\vx_{t}) - f_{\delta}(\vx_{t}, \vy_{t})\right)  - \frac{\eta_x}{8} \Vert G(\vx_t,\nabla\Psi_\delta(\vx_t),\eta_x)  \Vert^2   \\
&\quad - \frac{\eta_x}{4} \Vert  \tilde{\vu}_t \Vert^2 - \frac{\alpha \eta_y}{16} \Vert \tilde{\vv}_t  \Vert^2 + \alpha \eta_y \Vert   \vu_t - \nabla_x f_{\delta}( \vx_t,  \vy_t  )\Vert^2 
+ \alpha \eta_y \Vert \vv_t  - \nabla_y f_{\delta}(\vx_t,  \vy_t ) \Vert^2  \Big]
\\
=& 
\BE \Big[ \Psi_{\delta}(\vx_t)+ \alpha \left( \Psi_{\delta}(\vx_{t}) - f_{\delta}(\vx_{t}, \vy_{t})\right) - \frac{\eta_x}{8} \Vert G(\vx_t,\nabla\Psi_\delta(\vx_t),\eta_x)  \Vert^2 \\
&\quad- \frac{1}{4\eta_x} \Vert  \vx_{t+1}-\vx_t \Vert^2 - \frac{\alpha }{16\eta_y} \Vert \vy_{t+1}-\vy_t  \Vert^2 
+\alpha \eta_y \Vert   \vg_t - \nabla f_{\delta}( \vx_t,  \vy_t  ) \Big]\\
\le& 
\BE \Big[ \Psi_{\delta}(\vx_t)+ \alpha \left( \Psi_{\delta}(\vx_{t}) - f_{\delta}(\vx_{t}, \vy_{t})\right) - \frac{\eta_x}{8} \Vert G(\vx_t,\nabla\Psi_\delta(\vx_t),\eta_x)  \Vert^2 \\  
&\quad -\frac{\alpha}{16\eta_y} \Vert \vz_{t+1}-\vz_t  \Vert^2 +\alpha \eta_y \Vert   \vg_t - \nabla f_{\delta}( \vx_t,  \vy_t  )\Vert^2  \Big],
\end{split}
\end{align}
where the last step is based on the settings $\eta_x/\eta_y=\alpha/(192(1+\alpha)\kappa^2)$ and $\alpha\in(0,1/4]$ that implies
$1/(4\eta_x) \geq \alpha/(16\eta_y)$.

According to Lemma \ref{lemma: C.4 SVRG accelerate}, we add equation (\ref{eq: final (1+alpha) Psi - f}) along with equation (\ref{ineq: variance reduction}) multiplying~$\alpha\eta_y p^{-1}$ to obtain
\begin{align*}
&\mathbb{E}\left[\mathcal{L}_{t+1}\right]\\
 \leq&  \mathbb{E}\left[ \mathcal{L}_t - \frac{\eta_x}{8} \Vert G(\vx_t,\nabla\Psi_\delta(\vx_t),\eta_x)  \Vert^2   
+\frac{ 16\sqrt{2\pi}\alpha \eta_y dL^2}{\tilde{b}} + \left(\frac{\alpha \eta_y d^2L^2}{pb\delta^2}-\frac{\alpha}{16\eta_y}\right) \| \vz_{t+1}-\vz_t\|^2  \right] \\
\leq&   \mathbb{E}\left[ \mathcal{L}_t - \frac{\eta_x}{8} \Vert G(\vx_t,\nabla\Psi_\delta(\vx_t),\eta_x)  \Vert^2 
+\frac{  16\sqrt{2\pi}\alpha \eta_y dL^2}{\tilde{b}}\right],
\end{align*}
where the last step is based on the settings $\eta_y={\delta}/{(4c\sqrt{d}L)}$ and $pb \ge d/c^2$ which guarantees 
\begin{align*}
\frac{\alpha \eta_y d^2L^2}{pb\delta^2}-\frac{\alpha}{16\eta_y} \leq 0.    
\end{align*}
Hence, we finish the proof.
\end{proof}

\subsection{Proof of Theorem \ref{zero-thm}}
\begin{proof}
According to Lemma \ref{update}, it holds
\begin{align*}
\BE \left[ \frac{\eta_x}{8} \Vert G(\vx_t,\nabla\Psi_\delta(\vx_t),\eta_x)  \Vert^2   \right]
\le & \BE \left[ \mathcal{L}_t \right]-\BE[\mathcal{L}_{t+1}]  +  \frac{16\sqrt{2\pi }\alpha \eta_y dL^{2}}{\tilde{b}}
\end{align*}  
Taking the average on above inequality over $t = 0,1\cdots,T-1$, we obtain
\begin{align} \label{ieq:res-bound}
\frac{1}{T} \sum_{t=0}^{T-1} \BE[\Vert G(\vx_t,\nabla\Psi_\delta(\vx_t),\eta_x)  \Vert^2] \leq \frac{8\E[\mathcal{L}_0-\mathcal{L}_T] }{\eta_x T}  +
  \frac{128\alpha\sqrt{2\pi }dL^{2}\eta_y}{\tilde{b}\eta_x}.
\end{align}

We bound the term $\mathcal{L}_0-\mathcal{L}_T$ as follows
\begin{align}\label{L0-LT}
\begin{split}
\mathcal{L}_0-\mathcal{L}_T 
= & \Psi_{\delta}(\vx_0) + \alpha \left( \Psi_{\delta}(\vx_{0}) - f_{\delta}(\vx_{0}, \vy_{0}) \right)+\frac{\alpha\eta_y}{p}\norm{\mathbf{g}_0-\nabla f_\delta(\mathbf{z}_0)}^2 \\
& -\left(\Psi_{\delta}(\vx_T) + \alpha \left( \Psi_{\delta}(\vx_{T}) - f_{\delta}(\vx_{T}, \vy_{T}) \right)+\frac{\alpha\eta_y}{p}\norm{\mathbf{g}_T-\nabla f_\delta(\mathbf{z}_T)}^2\right) \\
\leq & 
\underbrace{\Psi_{\delta}(\vx_0) - \Psi_{\delta}(\vx_T)}_{C_1} + 
\underbrace{\alpha \left( \Psi_{\delta}(\vx_{0}) - f_{\delta}(\vx_{0}, \vy_{0}) \right)}_{C_2}+
\underbrace{\frac{\alpha\eta_y}{p}\norm{\mathbf{g}_0-\nabla f_\delta(\mathbf{z}_0)}^2}_{C_3},
\end{split}
\end{align}
where the last step is based on the fact $\Psi_\delta(\vx_T) = \max_{\vy \in \mathcal{Y}} f_\delta(\vx_T, \vy) \geq f_\delta(\vx_T, \vy_T)$.

For the term $C_1$, the definition of $\Psi_\delta(\vx)$, it holds
\begin{align}\label{C1}
\!C_1=\Psi_\delta(\vx_0)-\Psi_\delta(\vx_T)\le\Psi_\delta(\vx_0)-\inf _{\vx\in\fX} \Psi_\delta(\vx)\le \Phi(\vx_0)-\inf_{\vx\in\fX} \Phi(\vx)+2\delta L=\Delta+2\delta L,
\end{align}
where the second inequality is based on Lemma \ref{distance between Phi Phi_delta}.

For the term $C_2$,
according to Lemma~\ref{thm:zeroth-order minimization} with
$h(\vy)=-f(\vx_0,\vy)$, $H(\vy;\xi)=-F(\vx_0,\vy;\xi)$, $\nu=\delta$ and $\tilde\epsilon=\alpha^{-1}$ and the setting
\begin{align*}
K_{\mathrm{in}}=\left\lceil\frac{64\sqrt{2\pi} d_y L^2}{\alpha^{-1}\mu}\right\rceil
\end{align*}
implies
\begin{align}\label{C2}
\BE[C_2]=\alpha\BE[\Psi_{\delta}(\vx_0) - f_{\delta}(\vx_0, \vy_0)] \le 1.
\end{align}
For the term $C_3$, parameter settings of $\eta_y$, $p$, and $ b$ implies
\begin{align}\label{C_3}
\E[C_3] = \E\left[\frac{\alpha\eta_y}{p}\norm{\mathbf{g}_0-\nabla f_\delta(\mathbf{z}_0)}^2\right]
\le \frac{16\sqrt{2\pi}\alpha\eta_y dL^2}{pb}
= \frac{4c\sqrt{2\pi}\alpha\delta L}{d^{\frac{1}{2}}}
\le c\sqrt{2\pi}\delta L.
\end{align}
Combining equations (\ref{L0-LT}), (\ref{C1}), (\ref{C2}) and (\ref{C_3}), it holds  
\begin{align}\label{ieq:bound-diff-H}    
\E[\mathcal{L}_0-\mathcal{L}_T]\le  \Delta+ \left(2+c\sqrt{2\pi}\right)\delta L+1.
\end{align}
According to equations \eqref{ieq:res-bound} and \eqref{ieq:bound-diff-H} and parameter settings of $\eta_x$ and $\eta_y$ with $\kappa=M_{f_\delta}\mu^{-1}$ and $M_{f_\delta} =c \sqrt{d}L\delta^{-1}$,
it holds
\begin{align}\label{eq:avg-G_x} 
\begin{split}
&\frac{1}{T} \sum_{t=0}^{T-1} \BE[\Vert G(\vx_t,\nabla\Psi_\delta(\vx_t),\eta_x)  \Vert^2]\\
\leq &\frac{6144(1+\alpha)c^3d^\frac{3}{2}L^3 \left(\Delta+(2+c\sqrt{2\pi})\delta L+1\right)}{\alpha \mu^2\delta^3 T} 
 +
  \frac{24576(1+\alpha)c^2\sqrt{2\pi }d^2L^4}{\tilde{b}\mu^2\delta^2}.
  \end{split}
\end{align}
We set
\[
T=\frac{49152c^3d^{3/2}L^3(1+\alpha)\left(\Delta+(2+c\sqrt{2\pi})\delta L+1\right)}{\alpha \mu^2\delta^3 \epsilon^2},~~ \tilde{b}=\frac{196608(1+\alpha)c^2\sqrt{2\pi }d^2L^4}{\mu^2\delta^2\epsilon^2},
\]
then equation \eqref{eq:avg-G_x} implies
\begin{align}\label{gradient control 1}
\BE\left[\Vert G(\vx_{\mathrm{out}},\nabla\Psi_\delta(\vx_{\mathrm{out}}),\eta_x)\|\right]  \le \frac{\epsilon}{2}.    
\end{align}
Recall that line \ref{line:haty} call Algorithm \ref{alg:minimization} with
$h(\vy)=-f(\vx_{\mathrm{out}},\vy)$, $H(\vy;\vxi)=-F(\vx_{\mathrm{out}},\vy;\vxi)$, $\nu=\delta$, and 
$K_{\mathrm{out}}=\lceil{512\sqrt{2\pi }c^3  d^{3/2}d_y L^5}/{(\mu^3\delta^3  \epsilon^2)}\rceil$  
to achieve point $\vy_{\mathrm{out}}$ for given $\vx_{\mathrm{out}}$.
According to Lemma \ref{thm:zeroth-order minimization} 
it holds
\begin{align}\label{eq:f_delta_out}
   \E[ f_\delta(\vx_{\mathrm{out}},\vy_\delta^*(\vx_{\mathrm{out}}))-f_\delta(\vx_{\mathrm{out}},\vy)]\le \frac{ \epsilon^2\mu^2}{8M_{f_\delta}^3}.
\end{align}
We set $\tilde\eta_y= M_{f_\delta}^{-1}=\delta/({c\sqrt{d}L})$, then Lemma \ref{Function value optimality controls the gradient mapping} implies
\begin{align}\label{gradient control 2}
\begin{split}
&\frac{M_{f_\delta}}{\mu}\E[\| G_y(\vx_{\mathrm{out}}, \vy_{\mathrm{out}},\nabla_y f_\delta(\vx_{\mathrm{out}},\vy_{\mathrm{out}}),\tilde\eta_y) 
\|]\\
\le &
\frac{M_{f_\delta}}{\mu}\sqrt{\frac{2}{\tilde\eta_y}\E\left[f_\delta(\vx_{\mathrm{out}},\vy_\delta^*(\vx_{\mathrm{out}}))-f_\delta(\vx_{\mathrm{out}},\vy_{\mathrm{out}})\right]}
\leq
\frac{\epsilon}{2},
\end{split}
\end{align}
where the last step is based on equation \eqref{eq:f_delta_out} and the parameter settings.
According to Lemma \ref{lemma: gradient mapping between Psi_delta and f_delta}, we obtain
\begin{align}\label{gradient control 3}
\begin{split}
&\E[\norm{G_x(\vx_{\mathrm{out}},\vy_{\mathrm{out}},\nabla_x f_\delta(\vx_{\mathrm{out}},\vy_{\mathrm{out}}),\eta_x)}]\\
\leq&
\E[\norm{G(\vx_{\mathrm{out}},\nabla \Psi_\delta(\vx_{\mathrm{out}}),\eta_x)}] + \frac{M_{f_\delta}}{\mu}\E[\| G_y(\vx_{\mathrm{out}}, \vy_{\mathrm{out}},\nabla_y f_\delta(\vx_{\mathrm{out}},\vy_{\mathrm{out}}),\tilde\eta_y)] \|\le\epsilon,
\end{split}
\end{align}
where the last step is based on equations (\ref{gradient control 1}) and (\ref{gradient control 2}). 
Based on equations \eqref{gradient control 2} and \eqref{gradient control 3} and Proposition \ref{prop:smoothing}(d), we know that the output $(\vx_{\mathrm{out}}, \vy_{\mathrm{out}})$ is a $(\eta_x,\tilde{\eta}_y,\delta,\epsilon)$-GSSP of $f(\vx,\vy)$ in expectation.

Now we consider the complexity for finding above GSSP.
Based on the setting $pb= d/c^2$, the expected number of zeroth-order oracle calls in each iteration is 
\begin{align*}
p\tilde{b}+(1-p)b\le 
\frac{196608(1+\alpha)\sqrt{2\pi }c^2d^2L^4p}{\mu^2\delta^2\epsilon^2}+\frac{d}{c^2p} 
\le 
\frac{800^2c^2d^2L^4p}{\mu^2\delta^2\epsilon^2}+\frac{d}{c^2p}
= \frac{800d^{{3}/{2}}L^2}{\delta\mu\epsilon},
\end{align*}
where the last step is based on the settings
\[\quad b=\frac{800d^{\frac{3}{2}}L^2}{\delta\mu\epsilon}
\qquad \text{and}\qquad p=\frac{\delta\mu\epsilon}{800c^2\sqrt{d}L^2},\]
which minimize the term ${800^2c^2d^2L^4p}/{(\mu^2\delta^2\epsilon^2)}+{d}/{(c^2p)}$ under the constraint $pb=d/c^2$.
Therefore, the overall zeroth-order oracle calls to achieve a $(\eta_x,\tilde{\eta}_y,\delta,\epsilon)$-GSSP is
\begin{align*}
K_{\mathrm{in}} + K_{\mathrm{out}}+ (p\tilde{b}+(1-p)b)T
= \fO\left(\frac{L^5 d^{3}(
 \Delta+\delta L+1)}{\mu^3 \delta^4 \epsilon^3}\right)
\end{align*}
in expectation, which finished the proof.

\end{proof}

\subsection{Proof of Proposition \ref{prop: the smoothed function of the regularized}}
\begin{proof}
The definition of $f_\delta(\vx,\vy)$ implies
\begin{align}\label{eq: compute the smoothed function of the regularized}
\begin{split}
\tilde f_\delta(\vx,\vy) 
& =  \BE\left[f(\vx+\delta\vu, \vy+\delta\vv)-\frac{\epsilon}{2D_y}\|\vy + \delta\vv -\vy_0\|^2\right] \\    
& = f_\delta(\vx,\vy)
-\frac{\epsilon}{2D_y}\mathbb{E}\big[\|\vy+\delta \vv-\vy_0\|^2\big]\\
& = f_\delta(\vx,\vy) - \frac{\epsilon}{2D_y}\|\vy-\vy_0\|^2
- \frac{\epsilon}{D_y}\delta \left\langle \vy-\vy_0,\mathbb{E}[\vv]\right\rangle - \frac{\delta^2\epsilon}{2D_y}\mathbb{E}\left[\|\vv\|^2\right] \\
&= f_\delta(\vx,\vy)-
\frac{\epsilon}{2D_y}\|\vy-\vy_0\|^2 - \frac{\delta^2\epsilon}{2D_y}\mathbb{E}\left[\|\vv\|^2\right],
    \end{split}
\end{align}
where the last step is based on the fact $\mathbb{E}[\vv]=\mathbf{0}$ since the distribution
$(\vu,\vv)\sim \mathrm{Unif}(\mathbb{B}^d(\vzero,1))$
means the random vector $\vv$ is symmetric about the origin. 
Here, we denote $d=d_x+d_y$.

Now it remains to compute $\mathbb{E}\left[\|\vv\|^2\right]$. 
We denote 
\begin{align*}
\vq\triangleq \begin{bmatrix}
\vu \\ \vv    
\end{bmatrix}
= [q_1, \dots, q_d]^\top, \text{~~~~~i.e.,~~~}
\vu=[q_1, \dots, q_{d_x}]^\top
\text{~~~and~~~}
\vv=[q_{d_x}+1, \dots, q_{d}]^\top.
\end{align*}
Since the distribution $\vq\sim \mathrm{Unif}(\mathbb{B}^d(\vzero,1))$ is rotational invariance, it holds
\begin{align}\label{eq:sym-q}
\mathbb{E}[q_1^2]=\mathbb{E}[q_2^2]=\cdots=\mathbb{E}[q_d^2].
\end{align}
Additionally, the distribution $\vq\sim \mathrm{Unif}(\mathbb{B}^d(\vzero,1))$ implies for all $r\in[0,1]$, it holds
\[
\mathbb{P}(\|\vq\|^2\le r^2)
= \mathbb{P}(\|\vq\|\le r) =
\frac{\mathrm{Vol}(\mathbb{B}^d(\vzero,r))}{\mathrm{Vol}(\mathbb{B}^d(\vzero,1))}
= r^d.
\]
This implies for all $t\in[0,1]$, it holds
\begin{align}\label{eq:prob-q}
\mathbb{P}(\|\vq\|^2\ge t)
= 1- t^{d/2},
\end{align}
Therefore, we have
\begin{align}\label{eq:Eq}
\mathbb{E}\left[\|\vq\|^2\right]
= \int_0^{+\infty} \mathbb{P}(\|\vq\|^2\ge t)\,\mathrm{d}t
\overset{\eqref{eq:prob-q}}= \int_0^1 (1-t^{{d}/{2}})\,\mathrm{d}t
= \frac{d}{d+2}.
\end{align}
Combing above results, we achieve
\begin{align}\label{expectation of square of v}
\mathbb{E}[\|\vv\|]^2
= \sum_{i=d_x+1}^{d}\mathbb{E}[q_i^2]
\overset{\eqref{eq:sym-q}}{=} \frac{d_y}{d} \mathbb{E}\left[\|\vq\|^2\right]
\overset{\eqref{eq:Eq}}{=} \frac{d_y}{d+2}.
\end{align}
Combining equations \eqref{eq: compute the smoothed function of the regularized} and \eqref{expectation of square of v}, we conclude that
\[
\tilde f_\delta(\vx,\vy)
=
f_\delta(\vx,\vy)
-\frac{\epsilon}{2D_y}\|\vy-\vy_0\|^2
-\frac{d_y\delta^2\epsilon}{2(d+2)D_y}.
\]
\end{proof}

\subsection{Proof of Corollary \ref{thm: complexity of f: concave}}

\begin{proof}
Recall the regularized problem (\ref{prob:regularized}) can be written as
\begin{align*}
    \min_{\vx\in\fX}\max_{\vy\in\fY}\tilde{f}(\vx,\vy) \triangleq 
    \E\left[\tilde F(\vx,\vy;\vxi)\right]
\end{align*}
where 
\begin{align*}    
\tilde F(\vx,\vy;\vxi) = F(\vx,\vy;\vxi)-\frac{\epsilon}{2D_y}\|\vy-\vy_0\|^2.
\end{align*}
We then verify the mean-square Lipschitz continuity of the stochastic component $\tilde F(\vx,\vy;\vxi)$.
In particular, for all $(\vx_1,\vy_1),(\vx_2,\vy_2)\in \fX\times \fY$, it holds
\[
\tilde{F}(\vx_1,\vy_1;\vxi)-\tilde{F}(\vx_2,\vy_2;\vxi)
=
F(\vx_1,\vy_1;\vxi)-F(\vx_2,\vy_2;\vxi)
-\frac{\epsilon}{2D_y}
\Big(\|\vy_1-\vy_0\|^2-\|\vy_2-\vy_0\|^2\Big),
\]
which implies
\begin{align}\label{eq:tilde-F-L}
\begin{split}
&\left(
\mathbb{E}\left[
\big|\tilde{F}(\vx_1,\vy_1;\vxi)-\tilde{F}(\vx_2,\vy_2;\vxi)\big|^2
\right]
\right)^{1/2} \\
\le&
\left(
\mathbb{E}\left[
\big|F(\vx_1,\vy_1;\vxi)-F(\vx_2,\vy_2;\vxi)\big|^2
\right]
\right)^{1/2}
+
\frac{\epsilon}{2D_y}
\left |
\|\vy_1-\vy_0\|^2-\|\vy_2-\vy_0\|^2
\right |\\
\le&
L\sqrt{\|\vx_1-\vx_2\|^2+\|\vy_1-\vy_2\|^2}
+
\frac{\epsilon}{2D_y}
\big(\|\vy_1-\vy_0\|+\|\vy_2-\vy_0\|\big)\|\vy_1-\vy_2\|\\
\le&
L\sqrt{\|\vx_1-\vx_2\|^2+\|\vy_1-\vy_2\|^2}
+\epsilon \|\vy_1-\vy_2\|\\
\le& (L+\epsilon)\sqrt{\|\vx_1-\vx_2\|^2+\|\vy_1-\vy_2\|^2}.
\end{split}
\end{align}
where the first inequality is based on Minkowski's inequality;
the second inequality is based on the mean-squared
Lipschitz continuity of $F(\vx,\vy;\vxi)$ and triangle inequality; 
the third inequality is based on the fact that $D_y$ is the diameter of $\fY$.

Thus, equation \eqref{eq:tilde-F-L} indicates that the stochastic component $\tilde{F}(\vx,\vy,\vxi)$ is mean-squared Lipschitz continuous with parameter
\begin{align*}
\tilde{L}=L+\epsilon.    
\end{align*}
In addition, the concavity of $f(\vx,\vy)$ with respect to $\vy$ implies that the function $\tilde{f}(\vx,\vy)$ is $\tilde \mu$-strongly concave in $\vy$ with parameter
\begin{align*}
\tilde{\mu}=\frac{\epsilon}{D_y}.    
\end{align*}
Therefore, we can apply PGFDA (Algorithm \ref{alg:f}) with the stochastic zeroth-order oracle $\tilde F(\vx,\vy;\vxi)$ to solve the regularized minimax problem \eqref{prob:regularized}.
We further follow the proof of Theorem~\ref{zero-thm} by replacing 
$F$, $L$, $\mu$, and $\epsilon$ with $\tilde F$, $\tilde L$, $\tilde \mu$ and $\epsilon/2$, then results of \eqref{gradient control 2} and \eqref{gradient control 3} 
indicates we can achieve $(\vx_{\rm out}, \vy_{\rm out})\in\fX\times\fY$ such that
\begin{align}\label{gradient control 2: regularize}
\begin{split}
&\frac{M_{f_\delta}}{\mu}\E[\| G_y(\vx_{\mathrm{out}}, \vy_{\mathrm{out}},\nabla_y \tilde f_\delta(\vx_{\mathrm{out}},\vy_{\mathrm{out}}),\tilde\eta_y) 
\|]
\leq
\frac{\epsilon}{2},
\end{split}
\end{align}
and
\begin{align}\label{gradient control 3: regularize}
\begin{split}
&\E[\norm{G_x(\vx_{\mathrm{out}},\vy_{\mathrm{out}},\nabla_x \tilde f_\delta(\vx_{\mathrm{out}},\vy_{\mathrm{out}}),\eta_x)}]\le\epsilon
\end{split}
\end{align}
with the overall stochastic zeroth-order oracle complexity of
\begin{align*}
\fO\left(\frac{\tilde L^5 d^{3}(
\Delta+\delta L+ 1)}{\tilde \mu^3 \delta^4 \epsilon^3}\right)=\fO\left(\frac{(L+\epsilon)^5 d^{3}D_y^3 (
\Delta+\delta L+ 1)}{ \delta^4 \epsilon^6}\right),   
\end{align*}
where
\begin{align*}    \eta_x=\Theta\left(\frac{\tilde\mu^2\delta^3}{d^{3/2}\tilde L^3}\right)=\Theta\left(\frac{\epsilon^2\delta^3}{d^{3/2}(L+\epsilon)^3D_y^2}\right),
\quad\tilde\eta_y=\Theta\left(\frac{\delta}{\sqrt{d}\tilde L}\right)=\Theta\left(\frac{\delta}{\sqrt{d} (L+\epsilon)}\right).
\end{align*}

Finally, we show that $(\vx_{\rm out}, \vy_{\rm out})\in\fX\times\fY$ is a desired $(\eta_x,\tilde\eta_y,\delta,\epsilon)$-GSSP of Problem \eqref{prob:constrained main}.
According to Lemma \ref{lemma: g&G_1}, it holds
\begin{align}\label{eq:diff-delta-tilde}
\!\!\! \|G_y(\vx,\vy, \nabla_y f_\delta(\vx,\vy), \eta_y) - G_y(\vx,\vy, \nabla_y \tilde{f_\delta}(\vx,\vy), \eta_y) \|  
\leq  \|\nabla_y f_\delta(\vx,\vy)-\nabla_y \tilde{f_\delta}(\vx,\vy)\|
\leq \frac{\epsilon}{2}
\end{align}
for all $\vx\in\fX$ and $\vy\in\fY$,
where the last step is based on taking gradient with respect to $\vy$ on 
$\tilde{f_\delta}(\vx,\vy)=f_\delta(\vx,\vy)-\epsilon\|\vy-\vy_0\|^2/({2D_y})-{\epsilon\delta^2 d_y}/({2(d+2)D_y})$ that leads to
\begin{align*}
    \|\nabla_y f_\delta(\vx,\vy)-\nabla_y \tilde{f_\delta}(\vx,\vy)\|
= \frac{\epsilon}{D_y}\|\vy-\vy_0\|    
\leq\frac{\epsilon}{2}.
\end{align*}
This further implies
\begin{align}\label{eq:tilde-f-y}
\begin{split}    
&\BE\left[\|G_y(\vx_{\mathrm{out}}, \vy_{\mathrm{out}}, \nabla_y f_\delta(\vx_{\mathrm{out}},\vy_{\mathrm{out}}), \eta_y)\|\right]\\
\le & \BE\left[\| G_y(\vx_{\mathrm{out}},\vy_{\mathrm{out}}, \nabla_y f_\delta(\vx_{\mathrm{out}},\vy_{\mathrm{out}}), \eta_y) - G_y(\vx_{\mathrm{out}},\vy_{\mathrm{out}}, \nabla_y \tilde{f_\delta}(\vx_{\mathrm{out}},\vy_{\mathrm{out}}), \eta_y) \|\right] \\
& + \BE\left[\|G_y(\vx_{\mathrm{out}},\vy_{\mathrm{out}}, \nabla_y \tilde{f_\delta}(\vx_{\mathrm{out}},\vy_{\mathrm{out}}), \eta_y) \|\right] \\
\le & \frac{\epsilon}{2} + \frac{\epsilon}{2} = \epsilon,
\end{split}
\end{align}
where the second inequality is based on equations \eqref{eq:diff-delta-tilde} and \eqref{gradient control 2: regularize}.
Additionally, the definition of $\tilde f_\delta(\vx,\vy)$ means $\nabla_x f_\delta(\vx_{\mathrm{out}},\vy_{\mathrm{out}})=\nabla \tilde f_\delta(\vx_{\mathrm{out}},\vy_{\mathrm{out}})$, which leads to
\begin{align}\label{eq:tilde-f-x}
\|G_x(\vx_{\mathrm{out}},\vy_{\mathrm{out}}, \nabla_x f_\delta(\vx_{\mathrm{out}},\vy_{\mathrm{out}}), \eta_x)\| = \|G_x(\vx_{\mathrm{out}},\vy_{\mathrm{out}}, \nabla_x \tilde{f_\delta}(\vx_{\mathrm{out}},\vy_{\mathrm{out}}), \eta_x)\| \overset{\eqref{gradient control 3: regularize}}\le \epsilon.    
\end{align}
Hence, combining results \eqref{eq:tilde-f-y} and \eqref{eq:tilde-f-x} 
with Theorem 3.1 of \citet{lin2022gradient} implies that
$(\vx_{\mathrm{out}},\vy_{\mathrm{out}})$ is a $(\eta_x,\eta_y,\delta,\epsilon)$-GSSP of Problem \eqref{prob:constrained main}, which finishes the proof.
\end{proof}

\section{Proof for Results in Section \ref{sec:convergence NL-PGFDA}}\label{appendix:Phi}

This section provides detailed proofs for finding approximate generalized Goldstein stationary points (GGSPs) of the primal function $\Phi(\vx)=\max_{\vy\in\fY}f(\vx,\vy)$.

\subsection{Proof for Theorem \ref{Theorem of Phi}}

\begin{proof} 
We first define the zeroth-order estimator of $\nabla \Phi_\delta(\vx_t)$ as follows
\begin{align*}
\vg^*(\vx_t;\vw_{t,i},\vxi_{t,i})\!=\!\frac{d_x}{2\delta}(F(\vx_t\!+\!\delta\vw_{t,i},\vy^*(\vx_t\!+\!\delta\vw_{t,i});\vxi_{t,i})\!-\!F(\vx_t\!-\!\delta\vw_{t,i},\vy^*(\vx\!-\!\delta\vw_{t,i});\vxi_{t,i}))\cdot\vw_{t,i}.
\end{align*}
where $\vw_{t,i}\sim{\rm Unif}(\mathbb{S}^{d_x-1})$ and $\vxi_{t,i}\sim\fD$.
The definition of $\vy^*(\cdot)$ in equation \eqref{eq:y-star} leads to
\begin{align*}
    \vy^*(\vx_t+\delta\vw_{t,i})=\argmax_{\vy\in\mathcal Y} f(\vx_t+\delta\vw_{t,i},\vy)
    \quad \text{and} \quad 
    \vy^*(\vx_t-\delta\vw_{t,i})=\argmax_{\vy\in\mathcal Y} f(\vx_t -\delta\vw_{t,i},\vy).
\end{align*}
This implies that the vector $\vg^*(\vx_t;\vw_{t,i},\vxi_{t,i})$ is an unbiased estimator of $\nabla \Phi_\delta(\vx_t)$, i.e.,
\begin{align*}
& \BE[\vg^*(\vx_t;\vw_{t,i},\vxi_{t,i})] \\
=& \E_{\vw_{t,i},\vxi_{t,i}}\left[\frac{d_x}{2\delta}(F(\vx_t+\delta\vw_{t,i},\vy^*(\vx_t+\delta\vw_{t,i});\vxi_{t,i})-F(\vx_t-\delta\vw_{t,i},\vy^*(\vx_t-\delta\vw_{t,i});\vxi_{t,i}))\cdot\vw_{t,i}\right]
\\
=& \E_{\vw_{t,i}}\left[\frac{d_x}{2\delta}(f(\vx_t+\delta\vw_{t,i},\vy^*(\vx_t+\delta\vw_{t,i}))-f(\vx_t-\delta\vw_{t,i},\vy^*(\vx_t-\delta\vw_{t,i})))\cdot\vw_{t,i}\right]\\
= & \E_{\vw_{t,i}}\left[\frac{d_x}{2\delta}(\Phi(\vx_t+\delta\vw_{t,i})-\Phi(\vx_t-\delta\vw_{t,i}))\cdot\vw_{t,i}\right]
=\nabla\Phi_\delta(\vx_t),
\end{align*}
where the last step follows Lemma D.1 of \cite{lin2022gradient} since Proposition \ref{Lipshitz of Phi} indicates the function $\Phi(\vx)$ is Lipschitz continuous.
Then lines \ref{line:nl-pgfda-w} and \ref{line:nl-pgfda-xi} of Algorithm~\ref{alg:Phi} result the mini-batch unbiased estimator of $\nabla\Phi_\delta(\vx_t)$ as follows
\[
    \vu^*_t \triangleq \frac{1}{b}\sum_{i=1}^b\vg^*(\vx_t;\vw_{t,i},\vxi_{t,i}).
\]
According to Corollary 1 of \cite{chen2023faster}, 
the unbiasedness of $\vg^*(\vx_t;\vw_{t,i},\vxi_{t,i})$ leads to
\begin{align}\label{Variance of u^*}
    \E\left[\|\vu_t^*-\nabla \Phi_\delta(\vx_t)\|^2\right]\le\frac{16\sqrt{2\pi}d_xL^2}{b}.
\end{align}

Since the exact $\vy^*(\cdot)$ is unavailable in general, we apply the subroutine Gradient-Free-Descent (Algorithm \ref{alg:minimization}) to establish $\vy_{t,i}^+\approx\vy^*(\vx_t+\delta\vw_{t,i})$ and $\vy_{t,i}^-\approx\vy^*(\vx_t-\delta\vw_{t,i})$.
This results the estimator of $\vu^*_t$ as follows
\[
    \hat\vu_t\triangleq\frac{1}{b}\sum_{i=1}^b\hat{\vg}(\vx_t,\vy_{t,i}^+,\vy_{t,i}^-;\vw_{t,i},\vxi_{t,i}),
\]
where 
\[
    \hat{\vg}(\vx_t,\vy_{t,i}^+,\vy_{t,i}^-;\vw_{t,i},\vxi_{t,i})=  \frac{d_x}{2\delta}\big(F(\vx_t+\delta\vw_{t,i},\vy_{t,i}^+;\vxi_{t,i}) - F(\vx_t-\delta\vw_{t,i},\vy_{t,i}^-;\vxi_{t,i})\big)\cdot\vw_{t,i}.
\]

We then consider the difference between $\vg^*(\vx_t;\vw_{t,i},\vxi_{t,i})$ and $\hat{\vg}(\vx_t,\vy_{t,i}^+,\vy_{t,i}^-;\vw_{t,i},\vxi_{t,i})$.
In particular, the triangle inequality leads to
\begin{align*}
&\|\vg^*(\vx_t;\vw_{t,i},\vxi_{t,i})-\hat{\vg}(\vx_t,\vy_{t,i}^+,\vy_{t,i}^-;\vw_{t,i},\vxi_{t,i})\| \\
\le & \frac{d_x}{2\delta}
\Bigg(
\underbrace{\left |F(\vx_t+\delta\vw_{t,i},\vy^*(\vx_t+\delta\vw_{t,i});\vxi_{t,i})-F(\vx_t+\delta\vw_{t,i},\vy_{t,i}^+;\vxi_{t,i})\right |}_{\Delta_t^+} 
\\
& ~\quad\quad + \underbrace{\left |(F(\vx_t-\delta\vw_{t,i},\vy^*(\vx_t-\delta\vw_{t,i});\vxi_{t,i}) -  F(\vx_t -\delta\vw_{t,i},\vy_{t,i}^-;\vxi_{t,i})\right|}_{\Delta_t^-}\Bigg).
\end{align*}
Consequently, it holds
\begin{align}\label{eq:diff-g-star-g-hat}
\begin{split}    
&\E_{\vxi_{t,i}} \left[\|\vg^*(\vx_t,\vw_{t,i};\vxi_{t,i}) - \hat{\vg}(\vx_t,\vy_{t,i}^+,\vy_{t,i}^-;\vw_{t,i},\vxi_{t,i})\|^2\right]\\
\le & \frac{d_x^2}{4\delta^2}
\E_{\vxi}\left[(\Delta_t^+ +\Delta_t^-)^2\right]
\le \frac{d_x^2}{2\delta^2}
\E_{\vxi_{t,i}}\left[(\Delta_t^+)^2 +(\Delta_t^-)^2\right] \\
\le & \frac{d_x^2L^2}{2\delta^2}\Big(\norm{\vy_{t,i}^+ - \vy^*(\vx_t+\delta\vw_{t,i})}^2+\norm{\vy_{t,i}^- - \vy^*(\vx_t-\delta\vw_{t,i})}^2 \Big) \\
\le & \frac{d_x^2L^2}{\mu \delta^2}\Big( f(\vx_t+\delta\vw_{t,i},\vy^*(\vx_t+\delta\vw_{t,i})) - f(\vx_t+\delta\vw_{t,i},\vy_{t,i}^+) \\
& \qquad\quad + f(\vx_t - \delta\vw_{t,i}, \vy^*(\vx_t - \delta\vw_{t,i})) - f(\vx_t - \delta\vw_{t,i}, \vy_{t,i}^-) \Big), 
\end{split}
\end{align}
where the third inequality is based on Assumption \ref{asm:Lipschitz} and the last inequality is based on the fact that the function $f(\vx,\cdot)$ is $\mu$-strongly concave on $\fY$ which leads to
\[
f(\vx,\vy^*(\vx)) - f(\vx,\vy) \ge \frac{\mu}{2}\|\vy-\vy^*(\vx)\|^2 
\]
for all $\vx\in\fX$ and $\vy\in\fY$.
According to lines \ref{line:y+} and \ref{line:y-} of Algorithm \ref{alg:Phi} and Lemma~\ref{thm:zeroth-order minimization} with
$h(\vy)=-f(\vx_t,\vy)$, $H(\vy;\xi)=-F(\vx_t,\vy;\xi)$, {$\nu=\tilde\delta\triangleq\tilde\epsilon/(4L)$} and the setting $K=\left\lceil{64\sqrt{2\pi}d_yL^2}/(\mu\tilde\epsilon)\right\rceil$ for some $\tilde\epsilon>0$,
it holds
\begin{align}\label{eq:diff-f-star-+}
f(\vx_t+\delta\vw_{t,i},\vy^*(\vx_t+\delta\vw_{t,i})) - f(\vx_t+\delta\vw_{t,i},\vy_{t,i}^+) \leq \tilde\epsilon
\end{align}
and
\begin{align}\label{eq:diff-f-star--}
f(\vx_t - \delta\vw_{t,i}, \vy^*(\vx_t - \delta\vw_{t,i})) - f(\vx_t - \delta\vw_{t,i}, \vy_{t,i}^-)  \leq \tilde\epsilon.
\end{align}
Combining equations \eqref{eq:diff-g-star-g-hat}, \eqref{eq:diff-f-star-+}, and \eqref{eq:diff-f-star--}, we achieve
\begin{align*}
\E_{\vxi_{t,i}}\left[\|\vg^*(\vx_t,\vw_{t,i};\vxi_{t,i})-\hat{\vg}(\vx_t,\vy_{t,i}^+,\vy_{t,i}^-;\vw_{t,i},\vxi_{t,i})\|^2\right]
        \le&  \frac{2d_x^2L^2 \tilde\epsilon}{\mu \delta^2}.
\end{align*}
Furthermore, the above inequality implies
\begin{align}\label{control of the difference between u, hat u}
\begin{split}    
\E\left[\|\vu_t^*-\hat\vu_t\|^2\right]
= & \BE\left[\left\|\frac{1}{b}\sum_{i=1}^b(\vg^*(\vx_t;\vw_{t,i},\vxi_{t,i})-\hat{\vg}(\vx_t,\vy_{t,i}^+,\vy_{t,i}^-;\vw_{t,i},\vxi_{t,i}))\right\|^2\right]  \\
\le & \E\left[\|\vg^*(\vx_t,\vw_{t,i};\vxi_{t,i})-\hat{\vg}(\vx_t,\vy_{t,i}^+,\vy_{t,i}^-;\vw_{t,i},\vxi_{t,i})\|^2\right] \\
\le & \frac{2d_x^2L^2 \tilde\epsilon}{\mu \delta^2}.
\end{split}
\end{align}

Now we analyze the convergence for Algorithm \ref{alg:Phi}. 
Following the notation in equation~\eqref{eq:mapping-smooth}, we denote
\begin{align*}
    G(\vx_t, \hat\vu_t,\eta) = \frac{\vx_t - \Pi_\mathcal{X}\left(\vx_t - \eta \hat\vu_t\right) }{\eta}.    
\end{align*}
For the smoothed surrogate function 
\begin{align*}
 \Phi_\delta(\vx) \triangleq \BE_{\vw\sim\BB^{d_x}(\vzero,1)}\left[\max_{\vy \in \BR^{d_y}} f(\vx+\delta\vw, \vy)\right],
\end{align*}
the update $\vx_{t+1}=\Pi_\mathcal{X} (\vx_t-\eta\hat\vu_t)$ implies
\begin{align}\label{inter eq 1}
\begin{split}
\E\left[\Phi_\delta(\vx_{t+1})\right]
\le& \E\left[\Phi_\delta(\vx_t) + \langle \nabla \Phi_\delta(\vx_t), \vx_{t+1}-\vx_t \rangle + \frac{cL\sqrt{d_x}}{2\delta}\|\vx_{t+1}-\vx_t\|^2\right] \\
=& \E\left[\Phi_\delta(\vx_t) - \eta \langle \nabla \Phi_\delta(\vx_t), G(\vx_t,\hat\vu_t,\eta)\rangle + \frac{c\eta^2 L\sqrt{d_x}}{2\delta}\|G(\vx_t,\hat\vu_t,\eta)\|^2 \right]\\
=& \E\bigg[\Phi_\delta(\vx_t) - \eta \langle \hat\vu_t, G(\vx_t,\hat\vu_t,\eta)\rangle+\eta \langle \hat\vu_t-\vu^*_t, G(\vx_t,\hat\vu_t,\eta)\rangle \\
& \quad+\eta\langle \vu^*_t-\nabla \Phi_\delta(\vx_t), G(\vx_t,\hat\vu_t,\eta)\rangle  + \frac{c\eta^2 L\sqrt{d_x}}{2\delta}\|G(\vx_t,\hat\vu_t,\eta)\|^2
\Bigg]\\
\le& \E\bigg[\Phi_\delta(\vx_t) - \eta \|G(\vx_t,\hat\vu_t,\eta)\|^2+\frac{\eta }{2}\| \hat\vu_t-\vu^*_t\|^2+\frac{\eta}{2}\|G(\vx_t,\hat\vu_t,\eta)\|^2  \\
&\quad+\eta\langle \vu^*_t-\nabla \Phi_\delta(\vx_t), G(\vx_t,\hat\vu_t,\eta)\rangle + \frac{c\eta^2 L\sqrt{d_x}}{2\delta}\|G(\vx_t,\hat\vu_t,\eta)\|^2 \bigg]\\
=&\E\bigg[\Phi_\delta(\vx_t) - \Big(\frac{\eta}{2} - \frac{c\eta^2 L\sqrt{d_x}}{2\delta}\Big)\|G(\vx_t,\hat\vu_t,\eta)\|^2 +\frac{\eta }{2}\| \hat\vu_t-\vu^*_t\|^2\\
& \quad+ \eta \langle \vu^*_t-\nabla \Phi_\delta(\vx_t), G(\vx_t,\hat\vu_t,\eta)\rangle\bigg],
\end{split}
\end{align}
where the first inequality is based on the $cL\sqrt{d_x}\delta^{-1}$-smoothness of function $\Phi_\delta(\vx)$ according to Proposition 2.2 of \cite{lin2022gradient}; the second inequality is based on Lemma \ref{lemma: g&G_2} and Young's inequality.

We then bound the term $ \E[\langle \vu^*_t-\nabla \Phi_\delta(\vx_t), G(\vx_t,\hat\vu_t,\eta)\rangle]$ as follows
\begin{align}\label{internal eq 2}
\begin{split}
&\E\left[ \langle \vu^*_t-\nabla \Phi_\delta(\vx_t), G(\vx_t,\hat\vu_t,\eta)\rangle \right]
\\
=& \E\big [ \langle \vu^*_t-\nabla \Phi_\delta(\vx_t), G(\vx_t,\nabla \Phi_\delta(\vx_t),\eta)\rangle  \\
&\quad + \langle\vu^*_t-\nabla\Phi_\delta(\vx_t), G(\vx_t,\vu^*_t,\eta)-G(\vx_t,\nabla \Phi_\delta(\vx_t),\eta)\rangle \\
&\quad + \langle\vu^*_t-\nabla\Phi_\delta(\vx_t),
G(\vx_t,\hat\vu_t,\eta)-G(\vx_t,\vu^*_t,\eta)
\rangle \big]
\\
\le& 
\E\left[\|\vu^*_t-\nabla\Phi_\delta(\vx_t)\|\|G(\vx_t,\vu^*_t,\eta)-G(\vx_t,\nabla \Phi_\delta(\vx_t),\eta)\| 
+\|\vu^*_t-\nabla\Phi_\delta(\vx_t)\|\|\vu^*_t-\hat\vu_t\|\right]
\\
\le& 
\E\left[ \|\vu^*_t-\nabla\Phi_\delta(\vx_t)\|^2
+\frac{1}{2}\|\vu^*_t-\nabla\Phi_\delta(\vx_t)\|^2+\frac{1}{2}\|\vu^*_t-\hat\vu_t\|^2\right].
\end{split}
\end{align}
where the first inequality is based on the fact~$\mathbb{E}[\vu^*_t]=\nabla \Phi_\delta(\vx_t)$, Cauchy--Schwarz inequality, and Lemma \ref{lemma: g&G_1}; the second inequality is based on Lemma \ref{lemma: g&G_1} and AM–GM inequality.

By substituting equation (\ref{internal eq 2}) into equation (\ref{inter eq 1}) and rearranging the terms, it holds
\begin{align*}
&\left(\frac{\eta}{2} - \frac{c\eta^2 L\sqrt{d_x}}{2\delta}\right)\,
\mathbb{E}\bigg[\|G(\vx_t,\hat\vu_t,\eta)\|^2\bigg]
\\
\le&\mathbb{E}\bigg[\Phi_\delta(\vx_t)-\Phi_\delta(\vx_{t+1}) + \frac{3\eta}{2} \|\vu_t^*-\nabla \Phi_\delta(\vx_t)\|^2+\eta\|\vu^*_t-\hat\vu_t\|^2\bigg].
\end{align*}

Taking the average on above inequality over $t=0$ to $T-1$, we obtain
\begin{align*}
&\frac{1}{T}\sum_{t=0}^{T-1}\left(\frac{\eta}{2} - \frac{c\eta^2 L\sqrt{d_x}}{2\delta}\right)\,
\mathbb{E}\bigg[\|G(\vx_t,\hat\vu_t,\eta)\|^2\bigg]\\
\le& \mathbb{E}\bigg[\frac{\Phi_\delta(\vx_0)-\Phi_\delta(\vx_T)}{T} + \frac{3\eta}{2T}\sum_{t=0}^{T-1}\|\vu^*_t-\nabla \Phi_\delta(\vx_t)\|^2+\eta\|\vu^*_t-\hat\vu_t\|^2\bigg].
\end{align*}
Taking $\eta=\delta/(2cL\sqrt{d_x}\,)$,   
$\vx_{\mathrm{out}}=\vx_j$, and $\hat\vu_{\mathrm{out}}=\vu_j$
with $j\sim{\rm Unif}(\{0,\dots,T-1\})$, it holds
\begin{align}\label{control of the gradient mapping with hat u}
\begin{split}
&\mathbb{E}\|G(\vx_{\mathrm{out}},\hat\vu_{\mathrm{out}},\eta)\|^2\\
\le& \mathbb{E}\left[\frac{8cL\sqrt{d_x}\big(\Phi_\delta(\vx_0)-\Phi_\delta(\vx_T)\big)}{T\delta} + \frac{6}{T}\sum_{t=0}^{T-1}\|\vu^*_t-\nabla \Phi_\delta(\vx_t)\|^2+4\|\vu^*_t-\hat\vu_t\|^2\right] \\
\le& \frac{8cL\sqrt{d_x}(\Delta+2\delta L)}{T\delta} + \frac{96\sqrt{2\pi }d_xL^2}{b}+ \frac{8d_x^2L^2 \tilde\epsilon}{\mu \delta^2},
\end{split}
\end{align}
where the second inequality is based on 
equation \eqref{C1}, \eqref{Variance of u^*} and \eqref{control of the difference between u, hat u}.

Let $\vu_{\mathrm{out}}^*=\vu_j^*$ with $j\sim{\rm Unif}(\{0,\dots,T-1\})$.
We apply Young's inequality to achieve
\begin{align}\label{eq:G_out_eta}
\begin{split}    
&\mathbb{E}\left[\|G(\vx_{\mathrm{out}},\nabla \Phi_\delta(\vx_{\mathrm{out}}),\eta)\|^2\right]\\
\le & \mathbb{E}\left[2\| G(\vx_{\mathrm{out}},\vu^*_{\mathrm{out}},\eta)-G(\vx_{\mathrm{out}},\nabla \Phi_\delta(\vx_{\mathrm{out}}),\eta)\|^2 + 2\|G(\vx_{\mathrm{out}},\vu^*_{\mathrm{out}},\eta)\|^2\right]\\
\le &
\mathbb{E}\,\big[2\| G(\vx_{\mathrm{out}},\vu^*_{\mathrm{out}},\eta)-G(\vx_{\mathrm{out}},\nabla \Phi_\delta(\vx_{\mathrm{out}}),\eta)\|^2 \\
&~~~ + 4\|G(\vx_{\mathrm{out}},\vu^*_{\mathrm{out}},\eta)-G(\vx_{\mathrm{out}},\hat\vu_{\mathrm{out}},\eta)\|^2
 +4\|G(\vx_{\mathrm{out}},\hat\vu_{\mathrm{out}},\eta)\|^2\big]\\
\le&
\mathbb{E}\left[2\| \vu^*_{\mathrm{out}}-\nabla \Phi_\delta(\vx_{\mathrm{out}})\|^2 + 4\| \vu^*_{\mathrm{out}}-\hat\vu_{\mathrm{out}}\|^2
+4\|G(\vx_{\mathrm{out}},\hat\vu_{\mathrm{out}},\eta)\|^2\right]\\
\le& \frac{32cL\sqrt{d_x}(\Delta+2\delta L)}{T\delta} + \frac{416\sqrt{2\pi }d_xL^2}{b}+\frac{40 d_x^2L^2 \tilde\epsilon}{\mu \delta^2},
\end{split}
\end{align}
where the third inequality is obtained by Lemma \ref{lemma: g&G_1}
and the last step is based on equations ~\eqref{Variance of u^*}, ~\eqref{control of the difference between u, hat u} and~\eqref{control of the gradient mapping with hat u}.

We further take
\begin{align}\label{eq:prameter-Phi}    
\tilde\epsilon=\frac{\mu\delta^2\epsilon^2}{120d_x^2 L^2}, \qquad
T=\left\lceil\frac{96c\sqrt{d_x}L(\Delta+2\delta L)}{\delta\epsilon^2}\right\rceil, \qquad \text{and} \qquad b=\frac{1248\sqrt{2\pi}d_xL^2}{\epsilon^2},
\end{align}
which implies
\begin{align}\label{eq:G-Phi-epsilon}    
\BE\left[\|G(\vx_{\mathrm{out}},\nabla \Phi_\delta(\vx_{\mathrm{out}}),\eta)\|\right]
\leq \left(\mathbb{E}\left[\|G(\vx_{\mathrm{out}},\nabla \Phi_\delta(\vx_{\mathrm{out}}),\eta)\|^2\right]\right)^{1/2} \overset{\eqref{eq:G_out_eta},\,\eqref{eq:prameter-Phi}}\le \epsilon
\end{align}
and the corresponds $K$ is
\begin{align*}
K=\left\lceil\frac{7680\sqrt{2\pi} d_x^2d_y L^4}{\mu^2\delta^2\epsilon^2}\right\rceil.
\end{align*}
Combining with Theorem 3.1 of \citet{lin2022gradient}, we know that $\vx_{\mathrm{out}}$ is a $(\eta,\delta,\epsilon)$-GGSP of the primal function $\Phi(\vx)$ and the total number of stochastic zeroth-order oracle calls is
\[
T\cdot b\cdot K
= \fO\left(\frac{ d_x^{7/2} d_y L^7(\Delta+\delta L)}{\mu^2\delta^3 \epsilon^6}\right).
\]
\end{proof}

\subsection{Proof of Proposition \ref{gradient distance of the strongly concave approximation}}
\begin{proof}
For the ease of presentation, we denote 
\begin{align}\label{tilde phi_mu}
\tilde{\mu}_\Phi \triangleq \frac{\delta\epsilon}{d_x D_y^2},
\end{align}
then we can write
\begin{align*}
\tilde\Phi(\vx)=\max_{\vy\in\fY} \Big(f(\vx,\vy)-\frac{\tilde{\mu}_\Phi}{2}\|\vy-\vy_0\|^2\Big).
\end{align*}
We first bound the difference between $\Phi(\vx)$ and $\tilde\Phi(\vx)$.
The definitions of $\tilde\Phi(\vx)$ and $\Phi(\vx)$ implies
that for all $\vx\in\fX$ and $\hat\vy\in\operatorname{argmax}_{\vy\in\fY} f(\vx,\vy)$, it holds
\begin{align}\label{eq: lower}
\begin{split}
\tilde\Phi(\vx) 
& \ge f(\vx,\hat\vy)-\frac{\tilde\mu_\Phi}{2}\|\hat\vy-\vy_0\|^2
 \ge f(\vx,\hat\vy)-\frac{\tilde\mu_\Phi D_y^2}{2} 
= \Phi (\vx)-\frac{\tilde\mu_\Phi D_y^2}{2}.
\end{split}
\end{align}
Additionally, for all $\vx\in\fX$ and $\tilde\vy=\operatorname{argmax}_{\vy\in\fY} \big(f(\vx,\vy)-\frac{\tilde\mu_\Phi}{2}\|\hat\vy-\vy_0\|^2\big)$, it holds
\begin{align}\label{eq: upper}
   \tilde\Phi(\vx)
=f(\vx,\tilde\vy)-\frac{\tilde\mu_\Phi}{2}\|\tilde\vy-\vy_0\|^2
\le f(\vx,\tilde\vy)\le \Phi(\vx).
\end{align}
Combining equations \eqref{eq: lower} and \eqref{eq: upper}, we achieve
\begin{align}\label{difference between Phi and Phi_mu}
    |\Phi(\vx)-\tilde\Phi(\vx)|\le \frac{\tilde\mu_\Phi D_y^2}{2} .
\end{align}

We then bound the difference between $\nabla\Phi(\vx)$ and $\nabla\tilde\Phi(\vx)$.
According to Lemma 2.1 of \cite{flaxman2005online}, it holds  
\begin{align}\label{eq:Phi-delta-epsilon}
    \nabla \Phi_\delta (\vx)=\frac{d_x}{\delta} \E_{\vw}\left[\Phi(\vx+\delta\vw)\vw\right]
    \quad\text{and}\quad
    \nabla \tilde\Phi_\delta (\vx)=\frac{d_x}{\delta} \E_{\vw}\left[\tilde\Phi(\vx+\delta\vw)\vw\right].
\end{align}
where $\vw\sim{\rm Unif}(\BS^{d-1})$.
This implies for all $\vx\in\fX$, we have
\begin{align*}
\big\|\nabla\Phi_\delta(\vx) - \nabla\tilde\Phi_\delta(\vx)\big\|
= & \left\|\frac{d_x}{\delta}\,\mathbb{E}_{\vw}\left[\left(\Phi(\vx + \delta \vw)- \tilde\Phi(\vx + \delta \vw)\right)\vw\right]\right\|\\
\le & \frac{d_x}{\delta}\,\sup_{\hat\vx\in\fX}\left|\Phi(\hat\vx) - \tilde\Phi(\hat\vx)\right|
\overset{\eqref{difference between Phi and Phi_mu}}{\le} \frac{\tilde\mu_\Phi d_x D_y^2}{2\delta}.
\end{align*}
According to Lemma \ref{lemma: g&G_1}, it holds
\begin{align*}
    \left\|G(\vx,\nabla\Phi_\delta(\vx),\eta) - G(\vx,\nabla\tilde\Phi_\delta(\vx),\eta)\right\|\le
    \left\|\nabla\Phi_\delta(\vx) - \nabla\tilde\Phi_\delta(\vx)\right\|
    \overset{\eqref{eq:Phi-delta-epsilon}}{\le} \frac{\tilde\mu_\Phi d_x D_y^2}{\delta}\overset{\eqref{tilde phi_mu}}{\le}\frac{\epsilon}{2}.
\end{align*}

\end{proof}

\subsection{Proof of Corollary \ref{thm: complexity of phi: concave}}
\begin{proof}
Following the derivation of equation \eqref{eq:tilde-F-L}, we know that
\[ \tilde F_\Phi(\vx,\vy;\vxi) \triangleq F(\vx,\vy;\vxi) - \frac{\delta\epsilon}{2 d_x D_y^2}\|\vy-\vy_0\|^2 \]
is $\tilde{L}_\Phi$ mean-squared Lipschitz continuous with respect to $(\vx,\vy)$, where 
\[\tilde{L}_\Phi=L+\frac{\delta\epsilon}{d_x D_y}.\]
In addition, the concavity of $f(\vx,\vy)$ on $\vy$ implies the function
\begin{align*}
f(\vx,\vy)-\frac{\delta\epsilon}{2 d_x D_y^2}\|\vy-\vy_0\|^2    
\end{align*}
is $\tilde \mu_\Phi$-strongly concave in $\vy$ with parameter
\begin{align*}
\tilde{\mu}_\Phi=\frac{\delta\epsilon}{d_xD_y^2}.    
\end{align*}
Therefore, we can apply NL-PGFDA (Algorithm \ref{alg:Phi}) with the stochastic zeroth-order oracle $\tilde F_\Phi(\vx,\vy;\vxi)$ to solve the regularized minimax problem \eqref{prob:regularized2}.
Following the proof of Theorem~\ref{Theorem of Phi} by replacing 
$F$, $L$, $\mu$, and $\epsilon$ with $\tilde F_\Phi$, $\tilde L_\Phi$, $\tilde \mu_\Phi$ and $\epsilon/2$, then the result \eqref{eq:G-Phi-epsilon} indicates 
\begin{align}\label{eq:G-tilde-epsilon} 
\BE\left[\|G(\vx_{\mathrm{out}},\nabla \tilde\Phi_\delta(\vx_{\mathrm{out}}),\eta)\|\right]
\leq  \frac{\epsilon}{2}.
\end{align}
where
\begin{align*}    
\eta=\Theta\left(\frac{\delta}{d_x^{1/2}\tilde L_\Phi}\right)
=\Theta\left(\frac{\delta d_xD_y}{d_x^{3/2}D_y L+d_x^{1/2}\delta\epsilon}\right).
\end{align*}
Hence, it holds
\begin{align*}
& \left\|G(\vx_{\rm out},\nabla\Phi_\delta(\vx_{\rm out}),\eta)\right\| \\
\le & \left\|G(\vx_{\rm out},\nabla\Phi_\delta(\vx_{\rm out}),\eta) - G(\vx_{\rm out},\nabla\tilde\Phi_\delta(\vx_{\rm out}),\eta)\right\|
    +\left\|G(\vx_{\rm out},\nabla\tilde\Phi_\delta(\vx_{\rm out}),\eta)\right\|
\le \frac{\epsilon}{2}+\frac{\epsilon}{2}=\epsilon,
\end{align*}
where the last inequality is based on 
Proposition \ref{gradient distance of the strongly concave approximation} and equation  \eqref{eq:G-tilde-epsilon}.
Combining with Theorem 3.1 of \citet{lin2022gradient}, we know that $\vx_{\rm out}$ is a $(\eta,\delta,\epsilon)$-GGSP of the function $\Phi(\vx)$.
Additionally, the required overall stochastic zeroth-order oracle complexity is
\begin{align*}
\fO\left(\frac{ d_x^{7/2} d_y \tilde L_\Phi^7(\Delta+\delta \tilde L_\Phi)}{\tilde\mu_\Phi^2\delta^3 \epsilon^6}\right)
=
\fO\left(\frac{ d_x^{11/2} d_y \big(L+\delta\epsilon/(d_x D_y)\big)^7D_y^4 \big(\Delta+\delta L+\delta^2\epsilon/(d_x D_y)\big)}{\delta^5 \epsilon^8}\right).
\end{align*}
\end{proof}

\end{document}